\numberwithin{equation}{section}
\newtheorem{theorem}{Theorem}[section]
\newtheorem{definition}[theorem]{Definition}
\newtheorem{proposition}[theorem]{Proposition}
\newtheorem{lemma}[theorem]{Lemma}
\newtheorem{remark}{Remark}[section]
\newenvironment{proof}[2]{\noindent\textbf{Proof of #1 \ref{#2}}\newline}{\hfill$\square$}
\newcommand {\real} {{\mathbb R}}
\newcommand {\indicator} {{\mathbf{1} }}
\def\E{{\mathbb E}}
\def\P{\mathbb{P}}
\def\XX{\mathbb{X}}
\def\vtr{\ensuremath{V'_{trunc}}}
\def\utr{\ensuremath{u^h_{trunc}}}
\def\utrp{\ensuremath{u^{h,+}_{trunc}}}
\def\utrm{\ensuremath{u^{h,-}_{trunc}}}
\title{The scaling limit of a particle system with long-range interaction}
\author{Anton Bovier\thanks{Institut f\"ur Angewandte Mathematik, Universit\"at Bonn, Endenicher Allee 60, 
53115 Bonn, Germany.
email: \texttt{bovier@uni-bonn.de}} {} and 
Carina Geldhauser\thanks{Institut f\"ur Angewandte Mathematik, Universit\"at Bonn, Endenicher Allee 60, 
53115 Bonn, Germany.
email: \texttt{geldhauser@iam.uni-bonn.de}}
}
\begin{document}

\maketitle


\begin{abstract}
 We describe the macroscopic behaviour of a particle system with long-range interactions. We describe conditions on the interaction strength in dependency of the distance of the particles, such that the scaling limit of the particle system is a well-posed stochastic PDE. 
\end{abstract}

\section{Introduction}

Interacting particle systems model complex phenomena in natural and social sciences, such as traffic flow on highways or pedestrians, opinion dynamics, spread of epidemics or fires, reaction diffusion systems, crystal surface growth, chemotaxis and financial markets.
These phenomena involve a large number of interrelated components, which are modeled as particles confined to a lattice. Their motion and interaction is governed by local rules, plus some microscopic influences, which is modeled by an independent source of noise. Such noise can either be present in nature or it represents unresolved degrees of freedom.

Many models of particle systems are based on discrete on-site variables, so called spins. Our model, however, involves continuous local variables, and is therefore described by a system of interacting stochastic differential equations. Such models are sometimes called interacting diffusions.

In our model  \eqref{int:discretesystemunscaled}, each particle is subject to force derived from a bistable potential and perturbed by Brownian noise. The interaction between the particles is of long-range type, which means that each particle interacts with all particles which are at distance less or equal to $R$, where $R$ is very large, but significantly smaller than the total number of particles, which we denote by $N$.

Setting $R=1$ in our model gives the case of  nearest-neighbour interactions, whose dynamics has been studied in recent years by many authors. Due to the competition between local dynamics and coupling between different sites, a wide range of interesting behaviour was observed and investigated. Before stating  our model and the main result, we give a short overview on the history and some results on the nearest-neighbour case, which is very much related to our case:

Without noise, we know that there exist two stable states of the system. In presence of noise, the behaviour of the system is fundamentally different: Arbitrarily small random fluctuations can enable transitions between stable states at large time scales. Whether such transitions are observed will depend on the timescale of interest. The related concepts of phase transition, metastability and metastable timescales have been developed in the context of statistical-mechanics type models, for an overview see the recent book \cite{metabook} and the references therein.

For weak coupling, the behaviour is similar to the stochastic lattice models, where one often observes spatial chaos, i.e. independent dynamics at different sites. Also, bifurcations have been studied for the weak coupling regime, see \cite{gentzbf} and \cite{QC04} for a detailed analysis. 

However, as the coupling strength increases, the number of equilibrium points decreases. For strong coupling (of the order $N^2$, as in our case), the system synchronizes, in the sense that all particles assume almost the same position in their respective local potential most of the time.
 For large system size $N$, the behaviour of the nearest-neighbour interaction system is closer to the behaviour of a Ginzburg-Landau partial differential equation with noise, see \cite{eckmannhairer}, \cite{rougemont} and \cite{stein}, for example. Metastable behaviour in the large $N$ case (for $R=1$) has been studied in \cite{gentzpart2} and in \cite{BBM}, where sharp estimates on the metastable transition times between the two stable states have been obtained.

In the current work, we show that after suitable rescaling, our particle system \eqref{int:discretesystemunscaled} converges as $R$ and $N$ simultaneously go to infinity to the stochastic Allen Cahn equation \eqref{int:generalac} in one space dimension. Note that \eqref{int:generalac}  can be interpreted as a model of the movement of a random string, see \cite{funaki}. Existence and uniqueness of solutions to \eqref{int:generalac} has been proved in \cite{gyongypardoux}  via an approximation procedure similar to that in \cite{krylov}. Our proof via discretization in space is similar to the works \cite{gyongylattice} and \cite{gyongymillet}. 

The close relationship between our model and the nearest-neighbour model becomes obvious also in our main result: We obtain the same continuum limit as in the nearest-neighbour case (though of course the scaling is different), and the reason behind it are the assumptions we make on the interaction strength. We expect that there are weaker assumptions which lead to different, but still well-defined continuum limit,  this is subject of ongoing work.

\paragraph{Setting and statement of the main result}

We consider a system of $N$ coupled particles on a lattice $\Lambda = \mathbb{Z}/N \mathbb{Z} $. Each particle is subject to force derived from a bistable potential $V$ and perturbed by Brownian noise. 
The particle system can be described as a vector of initial positions $X^N(0) = (X_1^N, X_2^N, \ldots, X_N^N)$ and a system of $N$ coupled stochastic differential equations
\begin{equation}\label{int:discretesystemunscaled}
dX_i^N(t) = \frac{\gamma}{R^3} \sum_{j = - R}^R J_R(j)\left( X_i^N(t) - X_{i+j}^N(t) \right) dt - V'(X_i^N(t)) dt + \sqrt{2 \sigma} d\widetilde{B}_i(t), \; i \in \Lambda
\end{equation}
Here,  $X_i^N(t)$ are the components of the vector $X^N(t)\in \real^N$, $J_R(j) \in \real_+$ are weights, $V(q) = \frac{1}{4}q^4 - \frac{1}{2} q^2$ and  $\widetilde{B}_i$ are independent Brownian motions. $\gamma$ is a constant and $\sqrt{2 \sigma}$ the intensity of the noise.

\noindent In this model, each particle interacts with all of its neighbours up to distance $R$. The weights $ J_R(j)$ describes the strength of the interaction between two particles at site $i$ and $i+j$. 

We look for sufficient conditions on the weights $J_R(j)$ such that, after suitable rescaling, in the limit as $R,N \to \infty$, \eqref{int:discretesystem} gives rise to a well-posed stochastic PDE, 
\begin{equation}\begin{aligned}\label{int:generalac}
 \partial_t u (x,t) & =   \gamma A  u(x,t)  -  V'(u(x,t))  \; + \sqrt{2 \sigma}\frac{\partial^2}{\partial_x \partial_t}{W}(x,t) \qquad (x,t) \in  [0,1]  \times \real^+ \\
u(0,\cdot) & = u_0, \\
\end{aligned}
 \end{equation}
 where $A$ is the Laplace operator on $[0,1]$ with periodic boundary conditions, $\gamma > 0$ is the diffusion constant, $V$ a double well potential, $\frac{\partial^2}{\partial_x \partial_t}{W}(x,t)$ denotes space-time white noise and $\sqrt{2 \sigma}$ is the intensity of the noise.

 After a suitable rescaling of \eqref{int:discretesystemunscaled}, which we will present in detail in the next sections, we obtain the following result (see Theorem \ref{theo:asconvergence}, Theorem  \ref{theo:lpconvergence} and Theorem \ref{theo:transition}).
\begin{theorem}
Let $u^h_0$ be the piecewise linear approximation of an initial data $u_0 \in C^{4}$. Let $u(x,t)$ the solution to \eqref{int:generalac} and $u^h(x,t)$ 
the solution the following system of SDEs 
\begin{equation}\label{int:discretesystemintro}
du_i(t) =  \left(  \frac{\gamma}{R^{3} h^2} \sum_{j = - R}^R J_R(j)\left(  u_{i+j}(t) - u_i(t) \right)  \right)  dt - V'(u_i(t)) dt + \sqrt{\frac{2 \sigma}{h}} dB_i(t), \; i \in D_h
\end{equation}
with $J_R(j) = J\left(\frac{j}{R}\right)$, where $J$ is positive and satisfies $\int J(x) x^2 \, dx = 1$. 

\noindent If $R \sim h^{-\zeta}$ with $\zeta < \frac{1}{2}$, then 

i) for all times $T>0$, and all $p>1$, $u^h \longrightarrow u$ in $L^p\left(\Omega , C([0,1] \times [0,T])\right)$.

ii) for all times $T>0$, there exists an almost surely finite random variable $\XX$ such that
\begin{equation*}
 \sup_{[0,T] \times [0,1]} |u^h(x,t) - u(x,t) | \;  \leq \;  \XX  h^{\eta}  
\end{equation*}
for $0  < \eta  < \frac{1}{2} - \delta$. 

iii) Let $u_{\textup{min}}$ and $\tilde{u}_{\textup{min}}$ be the two minima of $V$. Let $u_0$ be close to $\tilde{u}_{\textup{min}}$. Define $ \tau (\rho , q) := \inf_{t >0} \left\lbrace  \|u - u_{\textup{min}}\|_{L^q([0,1])} < \rho \right\rbrace$ and $\tau^h (\rho , q) := \inf_{t >0} \left\lbrace \|u^h - u_{\textup{min}}^h\|_{L^q([0,1])} < \rho \right\rbrace$. Then we have for almost all $\rho >0$,
 \begin{equation*}
 \tau^h(\rho , q)  \longrightarrow \tau(\rho , q)  \qquad  \textup{a. s.  as } h \to 0
 \end{equation*}
 and 
  \begin{equation*}
 \E\left[\tau^h(\rho , q) \right] \longrightarrow  \E\left[ \tau(\rho , q) \right]   \qquad \textup{a. s.  as } h \to 0
 \end{equation*}
\end{theorem}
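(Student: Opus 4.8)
The plan is to establish the three assertions in a natural order: first the $L^p$-convergence (i), from which the almost-sure pathwise rate (ii) will follow by a Borel--Cantelli argument along a subsequence combined with a Kolmogorov-type continuity bound, and finally the convergence of hitting times (iii) as a soft consequence of (i)--(ii) together with a non-degeneracy property of the limiting crossing time. Throughout, the essential point is that the long-range difference operator $L_R^h u_i := \frac{1}{R^3 h^2}\sum_{j=-R}^R J_R(j)(u_{i+j}-u_i)$ is a consistent approximation of the Laplacian $A$: writing $J_R(j)=J(j/R)$ and Taylor-expanding $u$, the odd terms cancel by symmetry of $J$, the normalization $\int J(x)x^2\,dx=1$ makes the second-order term reproduce $A u$, and the remainder is controlled by $h^2 R^2 \|u\|_{C^4}$ plus a Riemann-sum error. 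Under the hypothesis $R\sim h^{-\zeta}$ with $\zeta<\tfrac12$ we have $h^2 R^2 = h^{2(1-\zeta)}\to 0$, which is exactly what is needed for the consistency error to vanish; this is the place where the restriction $\zeta<\tfrac12$ enters.

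For (i), I would set up the standard semigroup/mild-solution comparison. Let $P^h_t$ be the semigroup generated by $\gamma L_R^h$ on the grid $D_h$ and $S_t=e^{\gamma A t}$ the heat semigroup on $[0,1]$ with periodic boundary conditions; write both $u^h$ and (the grid restriction of) $u$ in mild form and subtract. The difference splits into (a) an initial-data term $(P^h_t \Pi_h u_0 - S_t u_0)$, controlled by consistency of $L^h_R$ together with $u_0\in C^4$; (b) a drift term $\int_0^t P^h_{t-s}\big(V'(u^h_s)-\Pi_h V'(u_s)\big)\,ds$, handled by the local Lipschitz bound on $V'$ after first obtaining uniform-in-$h$ moment bounds $\sup_h \E\sup_{[0,T]}\|u^h\|_\infty^p<\infty$ (these come from a Gronwall/energy estimate using the dissipativity of the double-well drift, as in the cited works \cite{gyongylattice}, \cite{gyongymillet}); and (c) the stochastic convolution term $\int_0^t (P^h_{t-s}\,dW^h_s - S_{t-s}\,dW_s)$, where $W^h$ is the natural piecewise-constant approximation of the cylindrical noise. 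Term (c) is the technical heart: one bounds it via the factorization method (a stochastic Fubini plus the Da Prato--Kwapień--Zabczyk trick), reducing to $L^p(\Omega)$-estimates of $\|P^h_{t-s}-S_{t-s}\|$ in Hilbert--Schmidt norm against the noise, which again feed back into the spectral consistency estimate for $L^h_R$ versus $A$. Summing (a)--(c), Gronwall in $t$ gives $\E\sup_{[0,T]}\|u^h-u\|_\infty^p \le C\,\phi(h)^p$ with $\phi(h)\to 0$; a careful bookkeeping of the exponents shows $\phi(h)\lesssim h^{\eta}$ for every $\eta<\tfrac12-\delta$ (the $\delta$ absorbing the loss in the Sobolev embedding $H^{1/2-}\hookrightarrow C$ and in the Riemann-sum error for the noise), which is precisely the rate claimed in (ii).

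For (ii), having $\E\sup_{[0,T]\times[0,1]}|u^h-u|^p \le C_p h^{\eta p}$ for all $p$, I would apply Chebyshev along the sequence $h_n=2^{-n}$ with $p$ chosen large enough that $\sum_n h_n^{(\eta-\eta')p}<\infty$; Borel--Cantelli then yields $\sup|u^{h_n}-u|\le h_n^{\eta'}$ eventually, and the passage from the subsequence to all $h\to 0$ uses monotonicity of the bound together with uniform modulus-of-continuity estimates in $h$. Defining $\XX:=\sup_n h_n^{-\eta}\sup_{[0,T]\times[0,1]}|u^{h_n}-u|$ gives the almost surely finite random variable. For (iii), fix $q$ and note that $t\mapsto \|u(\cdot,t)-u_{\textup{min}}\|_{L^q}$ is a.s. continuous; the key structural fact is that for a.e.\ level $\rho$ the limiting process crosses $\rho$ transversally (the set of $\rho$ for which the process is tangent to the level $\rho$ has measure zero — this uses the nondegeneracy of the solution of the SPDE, e.g.\ that it is not locally constant, which can be argued from the non-degeneracy of the driving noise). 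On this full-measure set of $\rho$, uniform convergence $u^h\to u$ on $[0,T]\times[0,1]$ (hence in $C([0,T];L^q)$) forces $\tau^h(\rho,q)\to\tau(\rho,q)$ a.s.; finally $\E\tau^h\to\E\tau$ follows by dominated convergence using $\tau^h\le T$ after truncation, or more carefully by a uniform integrability bound on $\tau^h$ obtained from the uniform moment estimates on $u^h$.

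The main obstacle is unquestionably the stochastic convolution comparison in step (c): matching the discrete noise $W^h$ driven by the rescaled Brownian motions $\sqrt{2\sigma/h}\,B_i$ with space-time white noise, and showing that the difference of the two stochastic convolutions is $O(h^\eta)$ in every $L^p(\Omega)$, requires sharp control of $\|(P^h_{t-s}-S_{t-s})Q^{1/2}\|_{HS}$ uniformly in the singularity at $s=t$, and it is here that the interplay between the spatial regularity threshold $\tfrac12$, the consistency order $h^2R^2=h^{2(1-\zeta)}$, and the constraint $\zeta<\tfrac12$ is genuinely delicate; everything else is a Gronwall argument on top of standard (if lengthy) a priori estimates.
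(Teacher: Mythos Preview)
Your overall architecture---mild-solution comparison, splitting into initial-data, drift, and stochastic-convolution terms, then Gronwall---matches the paper's, and your treatment of (iii) via a transversality/Fubini argument on the levels $\rho$ is essentially what the paper does. But there is a genuine gap in how you handle the drift term (b), and it is precisely the point where the paper invests most of its effort.

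You write that the drift difference is ``handled by the local Lipschitz bound on $V'$ after first obtaining uniform-in-$h$ moment bounds'', the latter coming from ``a Gronwall/energy estimate using the dissipativity of the double-well drift''. Neither step closes as stated. For the moment bounds: the drift $-V'(u)=u-u^3$ is dissipative, but you are asking for a uniform-in-$h$ bound on $\E\sup_{[0,T]\times[0,1]}|u^h|^p$ in the \emph{sup} norm, for a spatially discrete system; a naive Gronwall/energy argument gives $L^2$-type bounds, not $L^\infty$, and the passage to $L^\infty$ is exactly where the cubic growth bites. The paper does not do this directly: it first truncates $V'$ to a bounded Lipschitz $V'_{trunc}$, proves the $\sup$-moment bound for the truncated system (easy, since the drift is bounded), and then obtains the untruncated bound via a pathwise \emph{comparison principle} (Proposition~4.5, from Geiss--Manthey) sandwiching $u^h$ between two one-sidedly truncated solutions. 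For the convergence step: even with the moment bound in hand, you cannot simply insert a ``local Lipschitz constant'' into Gronwall, because that constant is $\sup|u^h|^2+\sup|u|^2$, which is random; the resulting inequality has a random rate and does not yield an $L^p(\Omega)$ bound. The paper's remedy is again truncation: prove $L^p$- and a.s.-convergence for $u^h_{trunc}\to u_Z$ with the Lipschitz $V'_{trunc}$ (Propositions~5.3--5.4), then remove the truncation by localizing on the events $\Omega_Z=\{\tau_{Z-\delta}>T,\ \liminf_h\tau^h_Z>T\}$, showing $\P[\Omega_Z^c]\to 0$ as $Z\to\infty$ (Lemma~5.5), and patching the estimates across $Z$.

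Two smaller remarks. First, the paper proves (ii) before (i): a.s.\ convergence of truncated solutions comes from Borel--Cantelli (your argument), and the $L^p$ result is then obtained by splitting $\E[\cdot]=\E[\indicator_{\Omega_{Z,h_0}}\cdot]+\E[\indicator_{\Omega_{Z,h_0}^c}\cdot]$ and using the uniform moment bound on the bad set; your order (i)$\Rightarrow$(ii) would also work once the truncation machinery is in place. Second, for the stochastic convolution the paper does not use factorization but direct BDG plus Kolmogorov--Centsov and explicit eigenvalue estimates on the discrete semigroup; either route is fine, but note that the rate $h^{1/2-\delta}$ comes from the H\"older exponent of the stochastic convolution, not from the consistency error $h^{2-2\zeta}$, so your identification of step~(c) as the bottleneck is correct.
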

Result  $(iii)$ is the convergence of transition times of our discrete system to the transition times of the SPDE   \eqref{int:generalac}. Precise estimates on the transition times for \eqref{int:generalac} have been proved in \cite{barret} via a potential theoretic approach. Similar results for a more general class of one-dimensional parabolic stochastic partial differential equations, which include also the bifurcation cases, were obtained in \cite{gentz2013}.

\section{Properties of the discrete Operator}

\paragraph{Notation and rescaling}
We rescale the unit lattice $\Lambda = \mathbb{Z}/N \mathbb{Z}$ by $h=\frac{1}{N}$ to arrive at the uniform grid $\mathbb{T}_h = \lbrace 0, h, \ldots , Nh\rbrace $ where we identify $0=1=Nh$. $\mathbb{T}_h$ is then a discretization of the interval $[0,1]$ in equidistant nodes. We call $h$ the grid size and will sometimes refer to $ih$ as the ''node $i$''. 

Moreover, we rescale the coupling constant $\tilde{\gamma}$ by $h^{-1}$ and the potential term by $h$. Then we accelerate time by a factor $\frac{1}{h}$, i.e. we set $\widetilde{X}(t) = X(t/h)$, which gives us another extra $h^{-1}$ on the coupling constant and cancels out the previous changes in the scaling of the potential. Moreover, this acceleration of time gives us a different sequence of independent Brownian motions, which we call $B_i(t)$.
The real-valued stochastic process $\widetilde{X}^h_i(t)$ can then be identified with the real-valued function $u_i(t, \omega)$ of nodal values at the node $i$. 
The resulting rescaled system of SDEs reads
\begin{equation}\label{int:discretesystem}
du_i(t) =  \left(  \frac{\gamma}{R^{3} h^2} \sum_{j = - R}^R J_R(j)\left(  u_{i+j}(t) - u_i(t) \right)  \right)  dt - V'(u_i(t)) dt + \sqrt{\frac{2 \sigma}{h}} dB_i(t), \; i \in \mathbb{T}_h
\end{equation}
Note that $u_i(t)$ is defined only at one specific node $i$. Via $u_i(t) := u^h(ih, t)$, the vector-valued function of nodal values  $u^h(t) = (u_1(t), u_2(t), \ldots, u_N(t))$ on the grid $D_h$ can be identified as  a continuous, piecewise linear function on $u^h(x,t): D \times \real^+ \to \real$.

Note furthermore that we can relate the rescaled Brownian noise to space-time white noise via
\begin{equation}\label{int:noise}
  \sqrt{h} B_i(t) = \int_{(i-1)h}^{ih} W(x,t) \, dx 
\end{equation}
We can rewrite \eqref{int:discretesystem} in integral form as
\begin{equation}\label{int:discreteintegralform}
\begin{aligned}
u^h(x,t) \, = \; \int_0^1 g^h_t(x,y) u^h_0(y) \, dy  \; - \; \int_0^t \int_0^1 g^h_{t-s}(x,y) V'((u^h(y,s)) \, dy ds \\
+ \sqrt{2 \sigma } \int_0^t \int_0^1 g^h_{t-s}(x,y) \, W( dy, ds)
\end{aligned}
\end{equation}
where $g^h_t(x,y)$ is the semigroup associated with the discrete operator $- \gamma A^h_R$ defined in \eqref{int:Rpointstar}. $g^h_t(x,y)$ is defined on $\real^+ \times [0,1] \times [0,1]$ in a piecewise linear fashion, see \eqref{int:discreteSG}.

\paragraph{Central difference operator and weights}

We interpret the collection of weights $J_R(j)$ as a weight function $ J_R: \mathbb{Z} \to \real_+$,  $J_R(j) = J\left(\frac{j}{R}\right)$ for some positive function $J$ satisfying 
\begin{equation}\label{int:J}
 \int_{\real} x^2 J(x) dx \; = \; 1
\end{equation}
Typical examples are $J_R(j) = c \exp(-j/R)$ or $J_R(j) = c \indicator_{|j|\leq R}$. 
The weights $J_R(j)$ are the entries of the $j$-th subdiagonal of the band matrix $A^h_R$, where $R$ indicates the width of the stencil:
\begin{equation}\label{int:Rpointstar}
A^h_R u_i = \frac{J_R(R) u_{i+R} + \ldots + J_R(1) u_{i+1}   - J_R(0) u_i + J_R(-1) u_{i-1} \ldots J_R(-R) u_{i-R}  }{R^{3} h^2}
\end{equation} 
Note that the weights $J_R(j)$ are fixed and do not change with time, so the central difference operator $-A^h_R$ is time-independent.
Moreover, $-A^h_R$ is a positive definite matrix, as, by construction from the model \eqref{int:discretesystem}, the values of the weight function satisfy the diagonal dominance relation $J_R(0) = 2 \sum_{j \geq 1} J_R(j) \;  = \; 2 \sum_{j=1}^R J_R(j)$ where $J_R(0)$ is the weight attributed to the reference site $i$.

\paragraph{Boundedness of the inverse}
The big difference between the particle system with long-range interaction and a particle system with nearest-neighbour interaction is that the interaction length $R$ actually tends to infinity as the number of particles go to infinity. 

We have modelled our particle system as a discretization in the space variable of a continuous limit, which means that instead of discussing the limit as the number of particles $N$ go to infinity, we actually consider the limit $h\to 0$ of a semidiscrete finite difference scheme (with $h = \frac{1}{N}$). As we consider the simultaneous limit of both variables $h$ and $R$, it is convenient to rewrite $R$ in terms of $h$, so we define $R = h^{-\zeta}$ with  $0 <  \zeta < 1$.
In the next lemma  we derive the admissible values of $\zeta$ such that $(-A^h_R)^{-1}$ is a bounded operator:
\begin{lemma}\label{lemma:J1}
 Let 
 \begin{equation}\label{int:Rstar}
 \begin{aligned}
 \gamma A_R^h  u_i &= \frac{\gamma}{R^{3} h^2} \sum_{j = - R}^R J_R(j)\left(  u_{i+j}(t) - u_i(t) \right) 
   \end{aligned}
 \end{equation}
with $ R \sim h^{-\zeta}$, and  $J_R(j)$ satisfying \eqref{int:J}. Let the eigenvalues of \eqref{int:Rstar} with periodic boundary conditions be denoted by  $\lambda_k^h = \frac{4\gamma }{h^2 R^3} \sum_{j=1}^R J_R(j) \sin^2\left(\frac{\pi}{2}khj  \right)$. 

Then, for $ \zeta < \frac{1}{2}$ 
 \begin{equation}\label{int:scalingJ1}
\begin{aligned}
\sum_{k=1}^{1/h}  \left(\lambda_k^h\right)^{-1} \; \leq \;  \sum_{k=1}^{h^{\zeta -1} }  \frac{1}{c k^2} + o(h^{1- 2 \zeta}) \; < \; \infty
\end{aligned}
\end{equation}
\end{lemma}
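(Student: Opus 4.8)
The strategy is to split the sum at the index $k_0:=h^{\zeta-1}=(hR)^{-1}$, the crossover between the two regimes of the dispersion relation: for $k\lesssim k_0$ all the phases $\tfrac{\pi}{2}khj$ with $1\le j\le R$ are $\lesssim 1$, the sine is in its linear range, and $\lambda_k^h$ is diffusive, $\lambda_k^h\asymp k^2$; for $k\gtrsim k_0$ the sine saturates and so does the eigenvalue, $\lambda_k^h\asymp(h^2R^2)^{-1}$. The low range will produce the convergent series $\sum(ck^2)^{-1}$ and the high range a contribution of order $h^{1-2\zeta}$, which vanishes exactly when $\zeta<\tfrac12$.

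For $1\le k\le k_0$: since $k\le(hR)^{-1}$ and $j\le R$ we have $\tfrac{\pi}{2}khj\le\tfrac{\pi}{2}$, hence $\sin^2(\tfrac{\pi}{2}khj)\ge(khj)^2$ (from $\sin\theta\ge\tfrac{2}{\pi}\theta$ on $[0,\tfrac{\pi}{2}]$), so
\[
\lambda_k^h\;\ge\;\frac{4\gamma}{h^2R^3}(kh)^2\sum_{j=1}^{R}J(j/R)j^2\;=\;4\gamma k^2\cdot\frac{1}{R}\sum_{j=1}^{R}J(j/R)(j/R)^2 .
\]
The average on the right is a Riemann sum for $\int_0^1 x^2J(x)\,dx>0$ (equal to $\tfrac12$ by symmetry of $J$ and \eqref{int:J}), hence bounded below by a positive constant for $R$ large; calling $c>0$ the resulting lower bound for $\lambda_k^h/k^2$ gives $\sum_{k=1}^{k_0}(\lambda_k^h)^{-1}\le\sum_{k=1}^{h^{\zeta-1}}(ck^2)^{-1}\le\tfrac{\pi^2}{6c}<\infty$, the first term in \eqref{int:scalingJ1}.

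The substantial step is the range $k_0<k\le 1/h$. Using $\sin^2 x=\tfrac12(1-\cos 2x)$, write $\lambda_k^h=\frac{2\gamma}{h^2R^3}\bigl(\Sigma_1-\Sigma_2(k)\bigr)$ with $\Sigma_1:=\sum_{j=1}^R J(j/R)$ and $\Sigma_2(k):=\sum_{j=1}^R J(j/R)\cos(\pi khj)$. Positivity and continuity of $J$ give $\Sigma_1\ge c_1R$ for $R$ large. For $\Sigma_2$ I would use summation by parts together with the Dirichlet-kernel bound $\bigl|\sum_{j=1}^m\cos(\pi khj)\bigr|\le|\sin(\pi kh/2)|^{-1}\le(kh)^{-1}$ (valid since $\pi kh\le\pi$) and the fact that $j\mapsto J(j/R)$ has total variation $O(1)$ uniformly in $R$ (as $J$ is of bounded variation on $[-1,1]$), obtaining $|\Sigma_2(k)|\le C(kh)^{-1}$. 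Choosing a fixed $A$ with $C/A\le c_1/2$, for $k\ge Ak_0$ this forces $|\Sigma_2(k)|\le CR/A\le\tfrac12\Sigma_1$, so $\lambda_k^h\ge\gamma c_1(h^2R^2)^{-1}$. For the remaining $O(k_0)$ indices $k_0<k<Ak_0$ one keeps only the terms $j\le R/(2A)$ in the defining sum — where again $\tfrac{\pi}{2}khj<\tfrac{\pi}{2}$ — and reruns the low-frequency bound to get $\lambda_k^h\ge c'k^2\ge c'(hR)^{-2}$. In all cases $\lambda_k^h\ge c_3(h^2R^2)^{-1}$ for $k>k_0$, whence $\sum_{k=k_0+1}^{1/h}(\lambda_k^h)^{-1}\le h^{-1}\cdot c_3^{-1}h^2R^2=c_3^{-1}h^{1-2\zeta}$.

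Combining the two estimates yields $\sum_{k=1}^{1/h}(\lambda_k^h)^{-1}\le\sum_{k=1}^{h^{\zeta-1}}(ck^2)^{-1}+O(h^{1-2\zeta})$; since $\zeta<\tfrac12$ the error term is $o(1)$, so for small $h$ the whole sum is at most $\tfrac{\pi^2}{6c}+1$, which gives \eqref{int:scalingJ1}. The main obstacle is precisely the uniform lower bound $\lambda_k^h\gtrsim(h^2R^2)^{-1}$ across the entire high-frequency range: linearising the sine is no longer possible there, and one must rule out that the oscillatory sum $\Sigma_2(k)$ (morally $R\int_0^1 J(x)\cos(\pi khR\,x)\,dx$) cancels the main term $\Sigma_1$ — this is where positivity of $J$ and the bounded-variation estimate enter — while the crossover band $k\asymp k_0$ has to be handled separately.
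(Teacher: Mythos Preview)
Your proof is correct and shares the paper's overall architecture: split at $k_0=h^{\zeta-1}$, linearise the sine for $k\le k_0$ to get $\lambda_k^h\gtrsim k^2$, and show a uniform lower bound $\lambda_k^h\gtrsim (h^2R^2)^{-1}$ for $k>k_0$. The low-frequency half is essentially identical to the paper's.

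The high-frequency half, however, is genuinely different. The paper treats only the indicator weight $J_R(j)=c\,\mathbf{1}_{|j|\le R}$ and argues directly that, once $khR>1$, enough of the $R$ terms $\sin^2(\tfrac{\pi}{2}khj)$ are bounded away from zero (pointing to the terms with $j\in[R/2,R]$), so that the whole $j$-sum is $\gtrsim R$. Your route instead writes $\sin^2=\tfrac12(1-\cos)$, isolates the oscillatory sum $\Sigma_2(k)=\sum_j J(j/R)\cos(\pi khj)$, and controls it by Abel summation against the Dirichlet-kernel bound $|\sum_{j\le m}\cos(\pi khj)|\le (kh)^{-1}$, together with a separate treatment of the crossover band $k_0<k<Ak_0$ by truncating the $j$-sum. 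This buys you the result for any positive $J$ of bounded variation on $[-1,1]$, not just the indicator; the price is the extra (mild) BV hypothesis, which the paper does not state but which both of its model examples satisfy. Conversely, the paper's argument is shorter for its special case but is written rather informally and does not immediately extend to general $J$. Your version is the more careful and more general of the two.
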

\begin{proof}{Lemma}{lemma:J1}
Consider first $J_R(j) = c \indicator_{|j|\leq R}$. 
We need to show that 
\begin{equation}\label{int:summabilityInd1}
\begin{aligned}
\sum_{k=1}^{1/h}  \left(\lambda_k^h\right)^{-1} \; \leq \; \sum_{k=1}^{1/h} \frac{1}{c} \frac{h^{-3 \zeta +2}}{\sum_{j=1}^{h^{-\zeta}} \sin^2\left(\frac{\pi}{2}khj\right) } \; < \infty
\end{aligned}
\end{equation}
We split the sum in $k \leq h^{\zeta -1}$ and $k > h^{\zeta -1}$. As for $k \leq h^{\zeta -1}$ we have $khj \leq 1$, the increment of the sine squared stays inside the regime $\left[0, \frac{\pi}{2}\right]$.
In this regime, we use the fact that $\sin^2(x) \geq \frac{4x^2}{\pi^2}$ to estimate 
\begin{equation}\label{int:summabilityInd2}
\begin{aligned}
\sum_{k=1}^{h^{\zeta-1}} \frac{1}{c} \frac{h^{-3 \zeta +2}}{\sum_{j=1}^{h^{-\zeta}} \sin^2\left(\frac{\pi}{2}khj\right) } \; 
\leq \; \sum_{k=1}^{h^{\zeta -1}} \frac{1}{c} \frac{h^{-3 \zeta +2} }{\sum_{j=1}^{h^{-\zeta}} k^2 h^2 j^2 } \;
\leq \; \sum_{k=1}^{h^{\zeta -1}} \frac{1}{c k^2}  \\
\end{aligned}
\end{equation}
which gives the first term in \eqref{int:scalingJ1}.

For $k > h^{\zeta -1}$, note that for any $j$ there exists a $k(j)$ such that $khj = 1$, for which we have $\sin^2\left(\frac{\pi}{2}khj\right) = 1$. Therefore, the denominator $\sum_{j=1}^{h^{-\zeta}} \sin^2\left(\frac{\pi}{2}khj\right)$ can be bounded from below by this element, which has the value $\sin^2\left(1 \right) = 1 $. Moreover, note that in the regime $\frac{h^{-\zeta}}{2} \leq j \leq h^{-\zeta}$, the denominator is bounded from below by $\sin^2\left(\frac{1}{2} \right)$. This gives with $\zeta < \frac{1}{2}$
\begin{equation}\label{int:summabilityInd4}
\begin{aligned}
\sum_{k > h^{\zeta -1}}^{1/h} \frac{1}{c} \frac{h^{- 3 \zeta +2}}{\sum_{j=1}^{h^{-\zeta}} \sin^2\left(\frac{\pi}{2}khj\right) }
\; &\leq \; \frac{1}{c} \left( h^{- 2 \zeta +2} \cdot h^{-1} \right) \; = \; o( h^{- 2 \zeta +1}) 
\end{aligned}
\end{equation}
\end{proof}

A direct consequence of Lemma \ref{lemma:J1} is the identity
 \begin{equation}\label{int:extraass1}
\frac{c}{R^3} \sum_{j=1}^R J_R(j) j^2 = 1
 \end{equation}
 which is a discrete version of the second moment condition on $J$. Moreover, we get
 \begin{equation}\label{int:extraass1b}
\frac{c}{R^3}  \sum_{j=1}^R J_R(j) j^4 = o( h^{- 2\zeta } ).
 \end{equation}

\paragraph{Convergence of eigenvalues and eigenvectors}
In this section, we state some useful facts on the eigenvalues and eigenvectors of  the long-range discrete operator $- \gamma  A^h_R$. 

\textbf{Notation:} From now on, when writing $- \gamma  A^h_R$ we mean the discrete symmetric stencil with appropriate choices of coefficient and scaling as stated in \eqref{int:Rstar}.

The eigenvalues  of $- \gamma  A^h_R$ with periodic boundary conditions read
\begin{equation}\label{int:eigenvalueslongrange}
\lambda_k^h = \frac{4\gamma }{h^2 R^3} \sum_{j=1}^R J_R(j) \sin^2\left(\frac{\pi}{2}khj  \right)
\end{equation}
Property \eqref{int:extraass1} gives immediately the upper bound
\begin{equation}\label{int:upperew}
 \begin{aligned}
  \lambda_k^h \leq  \gamma \pi^2  k^2 = \lambda_k \\
 \end{aligned}
\end{equation}
and as for $0 \leq x \leq \frac{\pi}{2}$, $\sin^2(x) \geq \frac{4x^2}{\pi^2}$, the following lower bound holds
\begin{equation}\label{int:lbew}
\begin{aligned}
\lambda_k^h &\geq \; \frac{4\gamma k^2}{R^3} \sum_{j=1}^R J_R(j) j^2  \overset{\eqref{int:extraass1}}{=} \; 4 \gamma k^2
\end{aligned}
\end{equation}
We will use this inequality frequently for estimates on the discrete semigroup.

Thanks to Lemma \ref{lemma:J1} we can conclude convergence of the eigenvalues:
\begin{lemma}\label{lemma:ewconvergence}
 Let $\lambda_k$ 
 be the eigenvalues of $A$ on $[0,1]$ and $\lambda^h_k$ given by \eqref{int:eigenvalueslongrange}  the eigenvalues of the the long-range discrete operator $- \gamma  A^h_R$. Let $R \sim h^{-\zeta}$ and $J_R(j)$ as in Lemma \ref{lemma:J1}.
Then we have 
\begin{equation}
\begin{aligned}
\lambda_k - \lambda_k^h \longrightarrow \; 0 \quad \textup{ as } h \to 0
\end{aligned}
\end{equation}
with rate up $h^{2 - 2\zeta}$.
\end{lemma}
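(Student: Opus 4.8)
The plan is to compare $\lambda_k^h$ with $\lambda_k=\gamma\pi^2 k^2$ by Taylor expanding $\sin^2$ around $0$ inside \eqref{int:eigenvalueslongrange}, and to read off the error term by term using the discrete moment identities \eqref{int:extraass1} and \eqref{int:extraass1b}. Fix $k\in\nat$. Since $R\sim h^{-\zeta}$ with $\zeta<1$, the largest argument occurring in \eqref{int:eigenvalueslongrange} is $\tfrac{\pi}{2}khR\sim\tfrac{\pi}{2}kh^{1-\zeta}\to 0$ as $h\to 0$, so for all sufficiently small $h$ every argument $\tfrac{\pi}{2}khj$ with $1\le j\le R$ lies in a fixed interval $[0,\delta_0]$ on which $\bigl|\sin^2 t-t^2+\tfrac13 t^4\bigr|\le C t^6$ for a universal constant $C$.

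Inserting $\sin^2\!\bigl(\tfrac{\pi}{2}khj\bigr)=\tfrac{\pi^2}{4}k^2h^2j^2-\tfrac{1}{3}\tfrac{\pi^4}{16}k^4h^4j^4+O\!\bigl(k^6h^6j^6\bigr)$ into \eqref{int:eigenvalueslongrange} and collecting powers of $h$ yields
\begin{equation*}
\lambda_k^h=\gamma\pi^2 k^2\,\frac{1}{R^3}\sum_{j=1}^R J_R(j)j^2\;-\;\frac{\gamma\pi^4 k^4 h^2}{12}\,\frac{1}{R^3}\sum_{j=1}^R J_R(j)j^4\;+\;\mathcal R_k^h,
\end{equation*}
with $|\mathcal R_k^h|\le C'\gamma k^6 h^4\,\tfrac{1}{R^3}\sum_{j=1}^R J_R(j)j^6$. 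By the normalization \eqref{int:extraass1} the first term is exactly $\lambda_k$. To bound the other two I would use that $J$ is bounded, so that $\tfrac{1}{R^3}\sum_{j=1}^R J_R(j)j^{2m}\le\|J\|_\infty R^{-3}\sum_{j=1}^R j^{2m}\le\|J\|_\infty R^{2m-1}$; with $R\sim h^{-\zeta}$ this makes $\tfrac{1}{R^3}\sum J_R(j)j^4$ of order at most $h^{-2\zeta}$ (in agreement with \eqref{int:extraass1b}) and $\tfrac{1}{R^3}\sum J_R(j)j^6$ of order at most $h^{-4\zeta}$. Hence
\begin{equation*}
\lambda_k-\lambda_k^h=\frac{\gamma\pi^4 k^4 h^2}{12}\,\frac{1}{R^3}\sum_{j=1}^R J_R(j)j^4-\mathcal R_k^h=O\!\bigl(h^{2-2\zeta}\bigr)+O\!\bigl(h^{4-4\zeta}\bigr),
\end{equation*}
where the $O$-constants depend only on $k$.

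Since $\zeta<\tfrac12$ we have $2-2\zeta>1>0$, so $\lambda_k-\lambda_k^h\to 0$, and because $4-4\zeta>2-2\zeta$ the leading error is the first term, of order $h^{2-2\zeta}$, which is exactly the claimed rate. The one point that needs some care is making the Taylor remainder estimate uniform over $1\le j\le R$: this is precisely why one wants the arguments $\tfrac{\pi}{2}khj$ to stay inside a fixed small interval, which is automatic for fixed $k$ as $h\to0$ (and, more generally, as long as $k=o(h^{\zeta-1})$, so that even $j=R$ produces a small argument) — harmless here since the assertion is pointwise in $k$. Everything else reduces to the elementary power-sum bound $\sum_{j\le R}j^{2m}\le R^{2m+1}$ together with the moment identities \eqref{int:extraass1}--\eqref{int:extraass1b} already established.
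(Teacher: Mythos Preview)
Your argument is correct and follows essentially the same route as the paper: both proofs Taylor-expand $\sin^2$ inside \eqref{int:eigenvalueslongrange}, identify the quadratic moment via \eqref{int:extraass1} as producing $\lambda_k$, and bound the quartic moment via \eqref{int:extraass1b} to obtain the $h^{2-2\zeta}$ rate. The only cosmetic difference is that the paper packages the expansion as the one-sided inequality $\sin^2 x\ge x^2(1-\tfrac{x^2}{3})$ on $[0,\tfrac{\pi}{2}]$ (combined with the already established upper bound \eqref{int:upperew}), which spares the explicit $O(t^6)$ remainder analysis you carry out; your version is slightly more explicit but not genuinely different.
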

\begin{proof}{Lemma}{lemma:ewconvergence}
We use the fact that for $0 \leq x \leq \frac{\pi}{2}$, $\sin^2(x) \geq x^2 ( 1- \frac{x^2}{3})$: 
\begin{equation}\label{int:ewconvergence}
\begin{aligned}
\lambda_k^h 
&\geq \frac{\gamma \pi^2 k^2 }{R^3} \sum_{j=1}^R \left\lbrace J_R(j) j^2  \left( 1 - \frac{1}{12}k^2h^2\pi^2 j^2\right)\right\rbrace \\
&\geq c \gamma \pi^2 k^2 \underbrace{\frac{1}{R^3} \sum_{j=1}^R J_R(j) j^2}_{\overset{\eqref{int:extraass1}}{=} 1} - \frac{\gamma}{12}k^4\pi^4 h^2  \underbrace{\frac{1}{R^3} \sum_{j=1}^R J_R(j) j^4}_{\overset{\eqref{int:extraass1b}}{=} h^{-2\zeta}}\\
&\leq \frac{\gamma}{12}  k^4 \pi^4 \cdot h^{2 - 2\zeta} = c(\gamma) h^{2 - 2\zeta}
\end{aligned}
\end{equation}
Note that the rate of convergence depends on $\zeta$. 
As $\zeta < \frac{1}{2}$, the rate $h^{2 - 2\zeta}$ is at most $ h^{1 + \epsilon}$.
\end{proof}
\paragraph{Eigenvectors}
The $m$-th entry of the eigenvector $v^h_k$ (the eigenvector of $- \gamma  A^h_R$ associated with the eigenvalue $\lambda^h_k$) reads:
\begin{equation}\label{int:eigenvector}
v^h_k(m) = \sin (\pi k mh) 
\end{equation}
Obviously, we get the following result:
\begin{lemma}\label{lemma:evconvergence}
Given $v_k = \sin (\pi k x)$ the eigenfunction of the Laplace operator with periodic boundary conditions on $[0,1]$ assiociated to the eigenvalue $\lambda_k$ and $v^h_k = (v^h_k(1), \ldots v^h_k(N))$ with $v^h_k(i)$ as in \eqref{int:eigenvector} the eigenvectors of $- \gamma A^h_R$.
Given $x\in [h(m-\frac{1}{2}), h(m+\frac{1}{2})[$.
Then, as $h\to 0$,
\begin{equation}\label{int:evconvergence}
\begin{aligned}
| v_k(x) - v^h_k(m) | \longrightarrow 0 \qquad \textup{as } h \to 0
 \end{aligned}
\end{equation}
\end{lemma}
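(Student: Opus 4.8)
The plan is to exploit the explicit formulas: the eigenfunction of $A$ is $v_k(x) = \sin(\pi k x)$ and the $m$-th entry of the discrete eigenvector is $v^h_k(m) = \sin(\pi k m h)$. So the claim reduces to the elementary fact that $\sin$ is Lipschitz, combined with the observation that $mh$ and $x$ are close whenever $x \in [h(m-\tfrac12), h(m+\tfrac12))$. Concretely, I would write
\begin{equation*}
|v_k(x) - v^h_k(m)| = |\sin(\pi k x) - \sin(\pi k m h)| \leq \pi k \, |x - m h| \leq \pi k \cdot \frac{h}{2},
\end{equation*}
using $|\sin a - \sin b| \leq |a-b|$ and $|x - mh| < h/2$ on the given interval. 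This already gives the bound $\tfrac{\pi k}{2} h \to 0$ as $h \to 0$ for each fixed $k$, which is exactly the assertion. I would remark that, unlike the eigenvalue convergence in Lemma \ref{lemma:ewconvergence}, here the rate is simply $O(h)$ with constant linear in $k$, and that no use of the weight function $J$ or of the scaling $R \sim h^{-\zeta}$ is needed — the discrete eigenvectors of $-\gamma A^h_R$ coincide with the sampled continuum eigenfunctions regardless of the stencil, because the operator is a circulant matrix diagonalized by the discrete Fourier modes.

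For cleanliness I would first note that the identification of $v^h_k(m) = \sin(\pi k m h)$ as an eigenvector of $-\gamma A^h_R$ (stated in \eqref{int:eigenvector}) already tells us the discrete eigenvectors are just the restrictions of the $v_k$ to the grid $\mathbb{T}_h$, so the lemma is really a statement about approximating a fixed smooth function by its nodal values. I would also point out that if one wants uniformity in $k$ over a range $k \leq K(h)$ (which is what one actually needs when summing over modes in the semigroup estimates later), the same computation gives $\sup_{k \le K} |v_k(x) - v^h_k(m)| \le \tfrac{\pi}{2} K h$, so it suffices to have $K(h) = o(h^{-1})$; I would state this as a remark since it is the form used downstream rather than carry a separate proof.

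There is essentially no obstacle here: the only thing to be careful about is the convention for which grid cell the point $x$ belongs to (the half-open interval $[h(m-\tfrac12), h(m+\tfrac12))$ is chosen precisely so that $m$ is the nearest node, giving the factor $h/2$ rather than $h$), and making sure the Lipschitz constant of $x \mapsto \sin(\pi k x)$ is recorded as $\pi k$. Everything else is a one-line triangle-inequality-free direct estimate. If the paper later needs convergence of the piecewise-linear interpolant of $v^h_k$ to $v_k$ in a stronger norm (e.g. $C([0,1])$ or $L^2$), the same Lipschitz bound integrates trivially, so I would mention that extension in passing but not belabor it.
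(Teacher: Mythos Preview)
Your proposal is correct and matches the paper's approach: the paper in fact gives no proof at all for this lemma (it is introduced with ``Obviously, we get the following result'' and closed immediately with $\square$), so the Lipschitz estimate $|\sin(\pi k x)-\sin(\pi k mh)|\le \pi k\,|x-mh|\le \tfrac{\pi k}{2}h$ you write out is exactly the implicit one-line argument. Your remark that the bound is $O(kh)$ is also consistent with how the paper uses the lemma downstream in the semigroup convergence estimate \eqref{int:sgsumconvkh}.
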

\hfill$\square$

\paragraph{Consistency of a difference operator}

Assuming sufficient regularity of the solution up to the boundary of the domain, to ensure convergence of a finite difference operator, we need that it is \textit{consistent}, which means a vanishing local error as the grid size goes to zero.
The order of consistency tells us about the rate of convergence of a difference stencil to a continuous operator. It is derived using the Taylor formula and comparing the coefficients. 
This approach leads to high regularity restrictions such as  $u\in C^{m+2}$, see for example \cite{nick} for details.

Exploiting cancellation effects given by the symmetry of the stencil and the equidistant grid, we get for $A^h_R$ as in \eqref{int:Rstar} with initial data $u_0 \in C^4([0,1])$
\begin{equation}\label{int:consistency}
\begin{aligned}
\sup_{y\in [0,1]} \big| A^h_R u(y) -  u_{xx}(y)\big| \leq O(h^2) \\
\end{aligned}
\end{equation}

\subsection{Convergence of the discrete semigroup}

Let $-\gamma A^h_R$ as in in \eqref{int:Rstar}. Denote by $\lambda_k^h$ the eigenvalues of $-\gamma A^h_R$ stated in \eqref{int:eigenvalueslongrange} and by $v_k^h$ the piecewise linear functions on the grid realized by the  eigenvectors of  $-\gamma A^h_R$, see  \eqref{int:eigenvector}. The discrete semigroup associated to $-\gamma A^h_R$ reads
 \begin{equation}\label{int:discreteSG}
  g_t^h(x,y) = \sum_{k=1}^{1/h} e^{-t \lambda_k^h} v_k^h(x) v_k^h(y)
 \end{equation}
Thanks to Lemma \ref{lemma:ewconvergence} and \ref{lemma:evconvergence}, we can already prove uniform convergence of $g^h$ to $g$ on  $[t_0,\infty) \times [0,1]^2$:
\begin{proposition}\label{prop:sgconv}
Let $ g_t(x,y)$ as in \eqref{int:sg} and $ g^h_t(x,y)$ as in \eqref{int:discreteSG} with eigenvalues as stated in \eqref{int:eigenvalueslongrange}  and eigenvectors \eqref{int:eigenvector}.
 For all $t_0 > 0$ and $\zeta < \frac{1}{2}$, there exists a constant $c(\gamma, t_0 )$ such that for all $(t,x,y) \in [t_0,\infty) \times [0,1]^2$
 \begin{equation}
 |g_t^h(x,y) - g_t(x,y)| \leq c(\gamma , t_0) h^{2-2\zeta} 
 \end{equation}
\end{proposition}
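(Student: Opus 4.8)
The plan is to compare the two spectral representations \eqref{int:sg} and \eqref{int:discreteSG} term by term, after truncating the continuous series at the index $1/h$. Since $|v_k(x)|\le 1$ and $|v_k^h(x)|\le 1$ (the latter because the piecewise linear extension of $\sin(\pi k m h)$ stays in $[-1,1]$), one may write
\begin{equation*}
\begin{aligned}
g_t^h(x,y) - g_t(x,y) ={}& \sum_{k=1}^{1/h}\bigl(e^{-t\lambda_k^h} - e^{-t\lambda_k}\bigr)v_k^h(x)v_k^h(y)\\
&{}+ \sum_{k=1}^{1/h} e^{-t\lambda_k}\bigl(v_k^h(x)v_k^h(y) - v_k(x)v_k(y)\bigr) - \sum_{k>1/h} e^{-t\lambda_k}v_k(x)v_k(y),
\end{aligned}
\end{equation*}
and I would estimate the three sums separately. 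The tail sum is immediate: with $\lambda_k=\gamma\pi^2 k^2$ and $t\ge t_0$ it is bounded by $\sum_{k>1/h} e^{-\gamma\pi^2 t_0 k^2}\le c(\gamma,t_0)\,e^{-c(\gamma,t_0)/h^2}$, which is smaller than any power of $h$.

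For the first sum I would use $1-e^{-x}\le x$ ($x\ge0$) together with $\lambda_k\ge\lambda_k^h$ from \eqref{int:upperew} to get
\begin{equation*}
\bigl|e^{-t\lambda_k^h}-e^{-t\lambda_k}\bigr| = e^{-t\lambda_k^h}\bigl(1-e^{-t(\lambda_k-\lambda_k^h)}\bigr)\le t\,(\lambda_k-\lambda_k^h)\,e^{-t\lambda_k^h}.
\end{equation*}
By \eqref{int:lbew} one has $\lambda_k^h\ge 4\gamma k^2$, hence $e^{-t\lambda_k^h}\le e^{-2\gamma t k^2}e^{-2\gamma t k^2}$; using $\sup_{t>0}t\,e^{-2\gamma t k^2}=(2\gamma e k^2)^{-1}$ and $e^{-2\gamma t k^2}\le e^{-2\gamma t_0 k^2}$ for $t\ge t_0$ this yields $\bigl|e^{-t\lambda_k^h}-e^{-t\lambda_k}\bigr|\le\frac{1}{2\gamma e}\,(\lambda_k-\lambda_k^h)\,k^{-2}\,e^{-2\gamma t_0 k^2}$. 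Inserting the eigenvalue estimate $\lambda_k-\lambda_k^h\le c(\gamma)\,k^4 h^{2-2\zeta}$ from Lemma~\ref{lemma:ewconvergence} and summing, the first sum is at most $c(\gamma)\,h^{2-2\zeta}\sum_{k\ge1}k^2 e^{-2\gamma t_0 k^2}=c(\gamma,t_0)\,h^{2-2\zeta}$, uniformly in $(x,y)\in[0,1]^2$ and $t\ge t_0$.

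For the second sum I would use that $v_k^h$ is the piecewise linear interpolant of $x\mapsto\sin(\pi k x)$ on a grid of width $h$, so the standard interpolation estimate gives $|v_k^h(x)-v_k(x)|\le\frac{\pi^2}{8}k^2 h^2$ for every $x$; since both factors are bounded by $1$, $|v_k^h(x)v_k^h(y)-v_k(x)v_k(y)|\le c\,k^2 h^2$, and with $e^{-t\lambda_k}\le e^{-\gamma\pi^2 t_0 k^2}$ the second sum is at most $c\,h^2\sum_{k\ge1}k^2 e^{-\gamma\pi^2 t_0 k^2}=c(\gamma,t_0)\,h^2$. As $\zeta<\frac12$ this is $o(h^{2-2\zeta})$, so collecting the three bounds gives $\sup_{[t_0,\infty)\times[0,1]^2}|g_t^h-g_t|\le c(\gamma,t_0)\,h^{2-2\zeta}$.

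The step needing genuine care — indeed the only real obstacle — is the uniformity in $t$ on the unbounded half-line $[t_0,\infty)$: the mean value bound for $|e^{-t\lambda_k^h}-e^{-t\lambda_k}|$ carries a factor $t$ that is not bounded and must be absorbed into the spectral gap via $t\,e^{-2\gamma t k^2}\le(2\gamma e k^2)^{-1}$. This is also exactly where $t_0>0$ enters: the leftover Gaussian weight $e^{-2\gamma t_0 k^2}$ (respectively $e^{-\gamma\pi^2 t_0 k^2}$) is what tames the polynomially growing factors $k^4$ and $k^2$ coming from the eigenvalue error of Lemma~\ref{lemma:ewconvergence} and from the interpolation error, so that the $k$-series converge; for $t_0=0$ these series diverge, which is why the bound cannot be extended down to $t=0$.
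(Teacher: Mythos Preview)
Your proof is correct and follows essentially the same approach as the paper: the same three-term splitting into eigenvalue differences, eigenvector differences, and the tail $k>1/h$, with the same use of \eqref{int:lbew}, \eqref{int:upperew} and Lemma~\ref{lemma:ewconvergence}. Your handling of the unbounded time interval via $\sup_{t>0} t\,e^{-2\gamma t k^2}=(2\gamma e k^2)^{-1}$ and your use of the second-order interpolation bound $|v_k^h-v_k|\le \tfrac{\pi^2}{8}k^2 h^2$ are slightly sharper and more explicit than the paper's corresponding steps, but the architecture of the argument is the same.
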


\begin{proof}{Proposition}{prop:sgconv}
We first look at the difference $g^h(x,y) - g(x,y)$ for fixed $k$, where we employ the convergence of the eigenvalues for $\zeta < \frac{1}{2}$ to get
\begin{equation}\label{int:sgfixedk}
 \begin{aligned}
|g_t^h(x,y) - g_t(x,y)| & \leq 2 e^{- \lambda^h_k t} | 1 - e^{-t (\lambda_k -  \lambda^h_k)}| + 2 e^{- \lambda_k t}  \big| v^h_k(x)- v_k(x)\big|
 \end{aligned}
\end{equation}
Now we sum over all terms, use the above estimate and calculate
\begin{equation}\label{int:sgsumconvkh}
 \begin{aligned}
  |g_t^h(x,y) - g_t(x,y)| &\leq \sum_{k=1}^{1/h }\left|e^{- \lambda^h_k t} v^h_k(x) v^h_k(y) - e^{- \lambda_k t} v_k(x) v_k(y) )\right|  + \underbrace{\sum_{k > 1/h}^{\infty} e^{- \lambda_k t} v_k(x) v_k(y)}_{(\ast)} \\
    &\overset{\eqref{int:lbew}}{ \leq } 2 \sum_{k=1}^{1/h }   \left( e^{- 4 \gamma k^2 t} \left| 1 - e^{-t \gamma k^4 \frac{\pi^4}{12}  h^{2-2\zeta}} \right|  +  \sqrt{2} \pi k h  \cdot e^{-  \gamma \pi^2 k^2 t} \right)  + (\ast)  \\
        &\leq   \frac{\pi^4}{6}  \gamma t    h^{2-2\zeta}  \sum_{k=1}^{1/h }   k^4   e^{- 4 \gamma \pi k^2 t}  \;  + \; 2  \sqrt{2} \pi \;\sum_{k=1}^{1/h }   k h  \cdot e^{-  \gamma \pi^2 k^2 t} + (\ast)  \\
        &\leq   h^{2-2\zeta}   \frac{ \gamma t \pi^4 }{12(4\pi \gamma t)^{5/2}} \Gamma \left(\frac{5}{2}\right) +   h  \cdot  \frac{\sqrt{2} }{ \pi \gamma t}  + \frac{h^2}{(\gamma \pi^2 t)^{\frac{3}{2}}} \Gamma \left( \frac{3}{2}\right)
     \end{aligned}
\end{equation}
this gives
\begin{equation}
 \begin{aligned}
 \sum_{k=1}^{1/h }  \left|e^{- \lambda^h_k t} v^h_k(x) v^h_k(y) - e^{- \lambda_k t} v_k(x) v_k(y) )\right|  &\leq  c(\gamma , t) \left(  h^{2-2\zeta}  +  h + h^2 \right)
 \end{aligned}
\end{equation}
\end{proof}

\section{Estimates on the stochastic integral}

\subsection{$L^2$ estimates on the discrete semigroup}
\begin{lemma}\label{lemma:discSG3}
For the discrete semigroup \eqref{int:discreteSG} the following $L^2$ estimates hold:
\begin{equation}\label{int:dsg1}
 \int_0^1 g_t^h(x,y)^2 \, dy \; \leq \; c (\gamma , t)
\end{equation}
\begin{equation}\label{int:dsg2}
  \int_0^1\int_0^1 g_t^h(x,y)^2 \,  dy \, dx \; \leq \;  c (\gamma , t)
\end{equation}
\begin{equation}\label{int:dsg3}
  \int_0^t \int_0^1 g_s^h(x,y)^2 \, dy \, ds \; \leq \; c (\gamma , t)
\end{equation}
\begin{equation}\label{int:dsg4}
  \int_0^t \int_0^1\int_0^1 g_s^h(x,y)^2 \, dy \, dx \, ds \; \leq \; c (\gamma , t)
\end{equation}
\end{lemma}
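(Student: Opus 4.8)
The plan is to reduce everything to the single estimate $\int_0^1 g_t^h(x,y)^2\,dy \le c(\gamma,t)$ and then obtain the other three bounds by trivial integration. Using the spectral representation \eqref{int:discreteSG}, $g_t^h(x,y) = \sum_{k=1}^{1/h} e^{-t\lambda_k^h} v_k^h(x) v_k^h(y)$, and the fact that the eigenvectors $v_k^h(\cdot)$ realized as piecewise linear functions on the grid are (up to a fixed normalization constant independent of $h$) orthonormal in $L^2([0,1])$ — this is the standard trigonometric identity for equidistant grids — I would expand the square and integrate in $y$:
\begin{equation}\label{int:dsgexpand}
\int_0^1 g_t^h(x,y)^2\,dy = \sum_{k,\ell=1}^{1/h} e^{-t(\lambda_k^h+\lambda_\ell^h)} v_k^h(x) v_\ell^h(x) \int_0^1 v_k^h(y) v_\ell^h(y)\,dy \;\le\; c \sum_{k=1}^{1/h} e^{-2t\lambda_k^h} v_k^h(x)^2 .
\end{equation}
(If the discrete eigenvectors are not exactly orthogonal but only orthogonal in the discrete inner product, one picks up a harmless $O(h)$ off-diagonal correction that is absorbed into the constant; I would remark on this.) Since $|v_k^h(x)| = |\sin(\pi k\,\langle x\rangle_h)| \le 1$ pointwise, the right-hand side is at most $c\sum_{k=1}^{1/h} e^{-2t\lambda_k^h}$.

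The key point is then the lower bound on the eigenvalues: by \eqref{int:lbew}, valid for $\zeta < \tfrac12$, we have $\lambda_k^h \ge 4\gamma k^2$ uniformly in $h$. Hence
\begin{equation}\label{int:dsggeom}
\int_0^1 g_t^h(x,y)^2\,dy \;\le\; c\sum_{k=1}^{1/h} e^{-8\gamma k^2 t} \;\le\; c\sum_{k=1}^{\infty} e^{-8\gamma k^2 t} \;\le\; c(\gamma,t),
\end{equation}
where the last series converges for every fixed $t>0$ and is finite (indeed of order $t^{-1/2}$ for small $t$, by comparison with $\int_0^\infty e^{-8\gamma s^2 t}\,ds$, which is the expected heat-kernel behaviour). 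This proves \eqref{int:dsg1}, with a constant independent of $x$ and of $h$. Estimate \eqref{int:dsg2} follows by integrating \eqref{int:dsg1} over $x\in[0,1]$, which only contributes a factor $1$. For \eqref{int:dsg3} I would use the $k^{-\,\cdot}$-summable bound $\int_0^t e^{-8\gamma k^2 s}\,ds \le \min(t, (8\gamma k^2)^{-1})$, so that $\int_0^t\int_0^1 g_s^h(x,y)^2\,dy\,ds \le c\sum_{k=1}^\infty (8\gamma k^2)^{-1} = c(\gamma)\,\pi^2/48 < \infty$; and \eqref{int:dsg4} again follows from \eqref{int:dsg3} by integrating in $x$.

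The only genuine subtlety — the "main obstacle," though it is more a bookkeeping matter than a real difficulty — is the orthogonality of the piecewise-linear eigenvectors in the continuum $L^2$ inner product: one must check that $\int_0^1 v_k^h(y) v_\ell^h(y)\,dy = \delta_{k\ell}\,\tfrac12 + O(h)$ uniformly in $k,\ell \le 1/h$, so that the double sum in \eqref{int:dsgexpand} collapses to a single sum without the error term growing with the number of modes. This is a direct computation with the hat-function basis and the product-to-sum formula for sines; the $O(h)$ terms, summed against the rapidly decaying exponentials $e^{-t(\lambda_k^h+\lambda_\ell^h)}$, contribute at most $c(\gamma,t)\,h$ and are absorbed. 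Everything else is the elementary convergence of a Gaussian-type series, uniform in $h$ precisely because of the uniform lower bound $\lambda_k^h \ge 4\gamma k^2$ supplied by \eqref{int:lbew}.
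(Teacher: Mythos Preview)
Your proposal is correct and follows essentially the same route as the paper: reduce via orthogonality of the $v_k^h$ to a single sum $\sum_k e^{-2t\lambda_k^h}$, apply the uniform eigenvalue lower bound \eqref{int:lbew} to get a Gaussian series, and obtain \eqref{int:dsg2}, \eqref{int:dsg4} by trivial integration and \eqref{int:dsg3} by integrating $e^{-8\gamma k^2 s}$ in $s$ to produce the summable factor $(8\gamma k^2)^{-1}$. The paper's proof is terser---it simply writes $\int_0^1 g_t^h(x,y)^2\,dy = \sum_k e^{-2t\lambda_k^h} v_k^h(x)^2$ without commenting on the $L^2$-orthogonality of the piecewise-linear $v_k^h$---so your discussion of the $O(h)$ off-diagonal correction is in fact more careful than the original on this point, but otherwise the arguments coincide.
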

\begin{proof}{Lemma}{lemma:discSG3}
To derive \eqref{int:dsg1}, we estimate
 \begin{equation}
 \begin{aligned}
 \sum_{k=1}^{1/h} e^{-2t \lambda_k^h} v_k^h(x)^2 
   &\overset{\eqref{int:lbew} }{\leq }  2 \sum_{k=1}^{1/h} e^{- 8 \gamma k^2 t } \leq \frac{\sqrt{\pi}}{2 \sqrt{2} \sqrt{\gamma t }}  \; \leq \;  c(\gamma , t) 
 \end{aligned}
 \end{equation}
The second inequality \eqref{int:dsg2} follows from the first by integration, which does not change the bound.
To derive \eqref{int:dsg3}, we estimate
 \begin{equation}
 \begin{aligned}
 \int_0^t \int_0^1 g_s^h(x,y)^2 dy ds
  &\overset{\eqref{int:lbew} }{\leq } \frac{1 }{ 8 \gamma} \sum_{k=1}^{1/h} \frac{1 }{k^2 } \left( 1 - e^{- 8 \gamma k^2 t} \right) 
 \leq  \frac{3}{ 8 \gamma } \min \left\lbrace \sqrt{8 \gamma t }, 1 \right\rbrace \, 
 \leq c (\gamma , t) 
 \end{aligned}
 \end{equation}
The fourth inequality \eqref{int:dsg4} follows from the first by integration, which does not change the bound.
\end{proof}

\subsection{Regularity of the discrete semigroup}
\begin{lemma}\label{lemma:discSG4}
Let $g_t^h(x,y)$ be the discrete semigroup in \eqref{int:discreteSG} and 
let $(x,t), (x',t')\in D \times [0,T]$ with $t' > t$. 
Then we have the following estimates:
\begin{equation}\label{int:spacevarsg}
 \int_0^t \int_0^1\big| g_{t-s}^h(x,y)  - g_{t-s}^h(x',y) \big|^2 dy ds \; \leq \; c(\gamma) |x - x'|
\end{equation}
and in time
\begin{equation}\label{int:timevarsg}
 \int_0^t \int_0^1\big| g_{t-s}^h(x,y)  - g_{t'-s}^h(x,y) \big|^2 dy ds \; \leq \; c(\gamma) \sqrt{|t - t'|}
\end{equation}
\end{lemma}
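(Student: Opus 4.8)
The plan is to reduce both estimates to spectral sums over the eigenvalues $\lambda_k^h$ and eigenvectors $v_k^h$, exactly as in the proof of Proposition~\ref{prop:sgconv}, and then to exploit the two-sided bounds $4\gamma k^2 \le \lambda_k^h \le \gamma\pi^2 k^2$ from \eqref{int:lbew} and \eqref{int:upperew} together with $|v_k^h(m)| \le 1$. For the spatial estimate \eqref{int:spacevarsg}, I would write
\begin{equation*}
g_{t-s}^h(x,y) - g_{t-s}^h(x',y) = \sum_{k=1}^{1/h} e^{-(t-s)\lambda_k^h}\big(v_k^h(x) - v_k^h(x')\big) v_k^h(y),
\end{equation*}
and use orthonormality of $\{v_k^h\}$ in the $y$-variable to get, after integrating $dy$, the bound $\sum_k e^{-2(t-s)\lambda_k^h}|v_k^h(x)-v_k^h(x')|^2$. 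Then I integrate $\int_0^t \,ds$, producing $\sum_k \frac{1}{2\lambda_k^h}\big(1-e^{-2t\lambda_k^h}\big)|v_k^h(x)-v_k^h(x')|^2 \le \frac{1}{8\gamma}\sum_k k^{-2}|v_k^h(x)-v_k^h(x')|^2$. Using the Lipschitz bound $|v_k^h(x)-v_k^h(x')| = |\sin(\pi k x) - \sin(\pi k x')| \le \min\{2, \pi k |x-x'|\}$ and splitting the sum at $k \sim |x-x'|^{-1}$ (small $k$: use the $\pi k|x-x'|$ bound so the summand is $\lesssim |x-x'|^2$; large $k$: use the bound $2$ so the summand is $\lesssim k^{-2}$) gives a total of order $|x-x'|$, which is \eqref{int:spacevarsg}.

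For the temporal estimate \eqref{int:timevarsg}, I would split the integral into $\int_0^t$ and note the factorization
\begin{equation*}
g_{t-s}^h(x,y) - g_{t'-s}^h(x,y) = \sum_{k=1}^{1/h} e^{-(t-s)\lambda_k^h}\big(1 - e^{-(t'-t)\lambda_k^h}\big) v_k^h(x) v_k^h(y),
\end{equation*}
so after squaring, integrating $dy$ (orthonormality again) and integrating $\int_0^t ds$, I obtain $\sum_k \frac{1}{2\lambda_k^h}\big(1-e^{-2t\lambda_k^h}\big)\big(1-e^{-(t'-t)\lambda_k^h}\big)^2 \le \frac{1}{8\gamma}\sum_k k^{-2}\big(1-e^{-(t'-t)\lambda_k^h}\big)^2$. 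Using $1-e^{-a} \le \min\{1, a\}$ with $a = (t'-t)\lambda_k^h \le \gamma\pi^2 k^2 (t'-t)$, and splitting the sum at $k \sim |t-t'|^{-1/2}$ (small $k$: the summand is $\lesssim k^{-2}\cdot k^4(t'-t)^2 = k^2(t'-t)^2$, summing to $\sim |t-t'|^{1/2}$; large $k$: the summand is $\lesssim k^{-2}$, summing to $\sim |t-t'|^{1/2}$) yields the claimed $\sqrt{|t-t'|}$.

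I expect the main subtlety to be the treatment of the piecewise-linear interpolation: $g_t^h$ is defined as a piecewise linear function realized from nodal values, so $v_k^h(x)$ for $x$ not a grid point is an interpolate rather than $\sin(\pi k x)$ exactly, and the clean orthonormality relation $\int_0^1 v_k^h(x) v_\ell^h(x)\,dx = \tfrac12\delta_{k\ell}$ holds for the nodal (discrete) inner product but only approximately for the continuous one. The honest route is either to carry out the $dy$ integration at the nodal level (where orthogonality is exact) and absorb the interpolation error into the constant using Lemma~\ref{lemma:evconvergence}, or to verify directly that the piecewise-linear hat-function interpolants of $\{\sqrt2\sin(\pi k \cdot)\}$ remain uniformly bounded in $L^2$ and nearly orthogonal. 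Once that bookkeeping is in place, the Lipschitz bound on $v_k^h$ (inherited from that of $\sin$) and the dyadic splitting above are routine, and the constants depend only on $\gamma$ as claimed — note in particular that the restriction $\zeta < \tfrac12$ is not needed here, since these bounds only use $\lambda_k^h \ge 4\gamma k^2$, which holds for all admissible $\zeta$.
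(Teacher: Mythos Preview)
Your proposal is correct and follows essentially the same route as the paper: expand $g^h$ spectrally, use orthonormality of the $v_k^h$ in the $y$-variable, integrate in $s$ to produce the factor $\tfrac{1}{2\lambda_k^h}$, invoke \eqref{int:lbew} to bound this by $\tfrac{1}{8\gamma k^2}$, and finish with the Lipschitz estimate $|v_k^h(x)-v_k^h(x')|\le \pi k|x-x'|$ for the spatial part and $1-e^{-a}\le\min\{1,a\}$ for the temporal part. Your explicit dyadic splitting at $k\sim|x-x'|^{-1}$ (resp.\ $k\sim|t-t'|^{-1/2}$) is in fact more carefully worked out than the paper's own computation, which compresses this step into a single line, and your remark about the piecewise-linear interpolation versus nodal orthogonality is a genuine bookkeeping point that the paper passes over in silence.
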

\begin{proof}{Lemma}{lemma:discSG4}
\underline{Part 1: Proof of \eqref{int:spacevarsg}}
Take two grid points $x = ih$ and $x' = mh$. As $|v^h_k(y)|^2 = 2 $ and $\int_0^t \exp(-2 \lambda_k^h  (t-s)) \leq \frac{1}{2\lambda_k^h}$, we get
\begin{equation}
 \begin{aligned}
 \int_0^t \int_0^1\big| g_{t-s}^h(x,y)  - g_{t-s}^h(x',y) \big|^2 dy 
  &\overset{\eqref{int:lbew} }{\leq } \sum_{k=1}^{1/h}  \frac{1}{8 \gamma k^2 } \pi k |x-x'|^2 
   &= c(\gamma) |x - x'|
  \end{aligned}
\end{equation}
\underline{Part 2: Proof of \eqref{int:timevarsg}}
By orthogonality of the eigenvectors, we can estimate
\begin{equation}
 \begin{aligned}
 \int_0^t \int_0^1\big| g_{t-s}^h(x,y)  - g_{t'-s}^h(x,y) \big|^2 dy
 &\leq \, 2 \,  \sum_{k=1}^{1/h} \frac{1}{2\lambda_k^h} \big|1 - e^{-(t'-t)\lambda_k^h}\big|^2  \\
  &\overset{\eqref{int:lbew} }{\leq } \, 2 \,  \sum_{k=1}^{1/h} \frac{1}{8 \gamma k^2 }  \big|1 - e^{-(t'-t)\lambda_k^h}\big|^2 \\
  &\overset{\eqref{int:upperew}}{\leq} c(\gamma) \sqrt{|t - t'|}
  \end{aligned}
\end{equation}
\end{proof}

\subsection{Regularity of the discrete stochastic integral}

\begin{lemma}\label{lemma:discSG5b}
Given the discrete semigroup \eqref{int:discreteSG} and  a sequence $u^h$ of random variables which satisfy
\begin{equation}\label{int:uhassS}
 \sup_h \sup_{t\in [0,T]} \sup_{x\in [0,1]} \E[ |u^h(x,t)|^p] \leq  C
\end{equation}
Define the stochastic integral  
\begin{equation}
 S(x,t) = \int_0^t \int_0^1 g^h_{t-s} (x,y) u^h(y,s) \; W(dy,ds)
\end{equation}
Then we have for  $1 \leq p < \infty$ and $T>0$ 
\begin{equation}\label{int:stochintreg1}
 \E\Big[ \Big|  S(x,t)  -  S(z,\tilde{t})   \Big|^p\Big] \leq \; c(p,T) \left( |t-\tilde{t}|^{\frac{1}{4}} +  |x-z|^{\frac{1}{2}}\right)^p
\end{equation}
with $c(p,T)$ independent of $h$. 
In particular, for $(x,t), (z,\tilde{t}) \in [0,1] \times [0,T]$ and  some exponent $\delta < \frac{1}{4}$ we have the following H\"older regularity estimate
\begin{equation}\label{int:stochintreg}
\Big|  S(x,t)  -  S(z,\tilde{t})   \Big|^p \leq \; Y(p,T, \delta, h) \left( |t-\tilde{t}|^{\frac{1}{4} - \delta } +  |x-z|^{\frac{1}{2} - \delta}\right)
\end{equation}
where $Y(p,T, \delta, h)$ is a random variable in $L^p$ with moment bound independent of $h$:
\begin{equation}
 \E\Big[ Y(p,T, \delta, h)^{\frac{1}{\delta}}\Big]  \; \leq \; C(p,T,\delta)
\end{equation}
\end{lemma}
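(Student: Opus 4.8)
The plan is to prove the moment bound \eqref{int:stochintreg1} first, by the factorization method combined with the Burkholder--Davis--Gundy (or Kahane--Khintchine) inequality for stochastic integrals against space-time white noise, and then deduce the pathwise H\"older estimate \eqref{int:stochintreg} from \eqref{int:stochintreg1} by a Kolmogorov-type continuity argument on the two-dimensional parameter set $[0,1]\times[0,T]$. For the moment bound, I would treat the spatial and temporal increments separately, writing
\begin{equation*}
\E\big[|S(x,t)-S(z,\tilde t)|^p\big]\;\le\; c_p\,\E\big[|S(x,t)-S(z,t)|^p\big]\;+\;c_p\,\E\big[|S(z,t)-S(z,\tilde t)|^p\big].
\end{equation*}
Each term is a stochastic integral against $W(dy,ds)$, so by BDG there is a constant $c_p$ with
\begin{equation*}
\E\big[|S(x,t)-S(z,t)|^p\big]\;\le\; c_p\,\E\Big[\Big(\int_0^t\!\!\int_0^1 |g^h_{t-s}(x,y)-g^h_{t-s}(z,y)|^2\,|u^h(y,s)|^2\,dy\,ds\Big)^{p/2}\Big].
\end{equation*}

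The second key step is to pull out the factor $|u^h(y,s)|^2$. Using H\"older's inequality in the measure $|g^h_{t-s}(x,y)-g^h_{t-s}(z,y)|^2\,dy\,ds$ (whose total mass over $[0,t]\times[0,1]$ is controlled by $c(\gamma)|x-z|$ by \eqref{int:spacevarsg}) together with the uniform moment assumption \eqref{int:uhassS}, one gets
\begin{equation*}
\E\big[|S(x,t)-S(z,t)|^p\big]\;\le\; c_p\,C\,\Big(\int_0^t\!\!\int_0^1 |g^h_{t-s}(x,y)-g^h_{t-s}(z,y)|^2\,dy\,ds\Big)^{p/2}\;\le\; c(p,T)\,|x-z|^{p/2},
\end{equation*}
and symmetrically for the time increment, invoking \eqref{int:timevarsg} to obtain $c(p,T)|t-\tilde t|^{p/4}$. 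Crucially, since the bounds in Lemmas \ref{lemma:discSG4} and the assumption \eqref{int:uhassS} are uniform in $h$, the constant $c(p,T)$ is independent of $h$, and combining the two pieces gives \eqref{int:stochintreg1}.

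Finally, to go from the moment bound to the pathwise estimate \eqref{int:stochintreg}, I would apply the Kolmogorov--Chentsov continuity theorem on the parameter domain $[0,1]\times[0,T]\subset\real^2$. The bound \eqref{int:stochintreg1} says $S$ has increments of order $(|t-\tilde t|^{1/4}+|x-z|^{1/2})^p$; rewriting the parabolic distance and choosing $p$ large enough relative to the ambient dimension $2$, Kolmogorov's theorem yields a modification of $S$ that is a.s.\ H\"older continuous with exponent $\tfrac14-\delta$ in time and $\tfrac12-\delta$ in space, and moreover produces the H\"older seminorm as an explicit random variable $Y(p,T,\delta,h)$ whose $1/\delta$-th moment is bounded by the constant $c(p,T,\delta)$ coming from \eqref{int:stochintreg1} — again uniformly in $h$ because the input constant is. I expect the main technical obstacle to be bookkeeping the $h$-uniformity through the factorization and the Kolmogorov argument, i.e.\ making sure the exponent of $h$ never sneaks in through the constants; the estimates themselves are routine once Lemma \ref{lemma:discSG4} is in hand. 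A secondary subtlety is that the semigroup increment bounds in \eqref{int:spacevarsg} are stated for grid points $x=ih$, so one must first reduce the general $x,z\in[0,1]$ case to grid points using the piecewise-linear structure of $g^h$, which contributes only lower-order terms.
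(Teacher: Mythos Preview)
Your proposal is correct and follows essentially the same route as the paper: separate the space and time increments, apply Burkholder--Davis--Gundy, pull the moments of $u^h$ outside using the uniform assumption \eqref{int:uhassS}, invoke the semigroup increment bounds of Lemma~\ref{lemma:discSG4} to get \eqref{int:stochintreg1}, and then deduce \eqref{int:stochintreg} by Kolmogorov--Chentsov. Two minor remarks: the ``factorization method'' you announce in your first sentence is never actually used (nor is it needed), and for the time increment you should remember to split off the extra stochastic integral over $[t,\tilde t]$ and bound it via \eqref{int:dsg3}, since \eqref{int:timevarsg} alone only handles the $[0,t]$ piece --- the paper does this explicitly.
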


\begin{proof}{Lemma}{lemma:discSG5b}
We look at variations in the space and time variable separately.
For the variation in space, we employ the Burkholder-Davis-Gundy inequality and Lemma \ref{lemma:discSG4} to get 
\begin{equation}
\begin{aligned}
\E & \Big[ \Big| S(z ,t)  -  S(y,s)  \Big|^{2p} \Big]^{\frac{1}{p}} \leq c(\gamma) |x-z| \sup_{[0,1] \times [0,T]} \E\Big[  u^h(y,s)^{2p} \Big]^{\frac{1}{p}}  \\
 \end{aligned}
\end{equation}
using Assumption \eqref{int:uhassS} and taking the $p/2$th power, we arrive at the first part of \eqref{int:stochintreg1}.
Similarly, for the variation in time, with $\tilde{t} = t+r$, we get by BDG
\begin{equation}
\begin{aligned}
 \Big| & S(x,t+r )  -  S(x ,t)   \Big|^{2p} \; \\
&\leq \E \Big[ \Big|\int_0^{t+r } \int_0^1 | g^h_{t+r -s} (x,y) \, - \,  g^h_{t-s} (x,y) |^2 \; | u^h(y,s) |^2 \; dy ds \Big|^p \Big]^{\frac{1}{p}}  \\ 
& \leq \int_0^{t} \int_0^1 \Big| g^h_{t+r -s} (x,y) \, - \,  g^h_{t-s} (x,y) \Big|^2 \; \E  \Big[  | u^h(y,s) |^{2p}  \Big]^{\frac{1}{p}}  \; dy ds \\
& + \int_t^{t+r }  \int_0^1 \Big| g^h_{t+r -s} (x,y) \, - \,  g^h_{t-s} (x,y) \Big|^2 \; \E  \Big[  | u^h(y,s) |^{2p}  \Big]^{\frac{1}{p}}  \; dy ds \\
\overset{\eqref{int:dsg3}}{\leq}& c(\gamma ,t) \sqrt{| t - \tilde{t}|}  \sup_{[0,1] \times [0,T]}  \E  \Big[  | u^h(y,s) |^{2p}  \Big]^{\frac{1}{p}} 
 \end{aligned}
\end{equation}
Assumption \eqref{int:uhassS} and taking the $p/2$th power gives the second part of \eqref{int:stochintreg1}, which concludes the proof.
The estimate \eqref{int:stochintreg} follows from \eqref{int:stochintreg1} by direct application of the Kolmogorov-Centsov theorem.
\end{proof}

\begin{lemma}\label{lemma:discSG6}
The random variable
\begin{equation}\label{int:Bh}
 B^h(x,t) = \int_0^t \int_0^1 g^h_{t-s} (x,y) \; W(dy,ds)
\end{equation}
with $g^h_{t-s} (x,y)$ defined as in \eqref{int:discreteSG} is continuous on $[0,1] \times \real^+$. 

Moreover, for $T>0$ and some exponent $\delta < \frac{1}{4}$ there exists a random variable $Y^h(p,T, \delta )$ in $L^p$ such that for all $(x,t)$ and $(z,\tilde{t})$ in $[0,1] \times [0,T]$ the following inequality holds
\begin{equation}\label{int:holderstatement}
\Big|  B(x,t)  -  B(z,\tilde{t})  \Big| \leq \; Y^h (p,T, \delta) \left( |t-\tilde{t}|^{\frac{1}{4} - \delta } +  |x-z|^{\frac{1}{2} - \delta}\right)
\end{equation}
where 
\begin{equation}\label{int:yhass}
 \sup_h \E\Big[ Y_h^p\Big]  \; \leq \; C(p,T,\delta)
\end{equation}
\end{lemma}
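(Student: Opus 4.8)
The plan is to recognize $B^h$ as the special case of the stochastic integral $S$ from Lemma \ref{lemma:discSG5b} obtained by taking $u^h \equiv 1$. The $L^2$ bound \eqref{int:dsg3} guarantees that for each fixed $(x,t)$ the Walsh integral defining $B^h(x,t)$ makes sense as a centred Gaussian random variable, and the constant function $u^h(y,s)=1$ trivially satisfies assumption \eqref{int:uhassS} with $C=1$, uniformly in $h$. Hence Lemma \ref{lemma:discSG5b} applies verbatim and yields, for every $1\le p<\infty$ and $T>0$,
\[
\E\Big[\big|B^h(x,t)-B^h(z,\tilde t)\big|^p\Big]\;\le\;c(p,T)\big(|t-\tilde t|^{1/4}+|x-z|^{1/2}\big)^p ,
\]
with $c(p,T)$ independent of $h$. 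This $h$-uniformity is the crux: all constants entering Lemma \ref{lemma:discSG5b} are inherited from Lemmas \ref{lemma:discSG3} and \ref{lemma:discSG4}, whose estimates on the discrete semigroup rely only on the lower bound \eqref{int:lbew} and the upper bound \eqref{int:upperew} on the eigenvalues, both of which are $h$-independent; the only role of $h$ is to truncate the eigenvalue sum at $k=1/h$, and truncation only removes nonnegative terms, so the bounds persist uniformly.

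Next I would invoke the quantitative Kolmogorov--Chentsov continuity theorem (via the Garsia--Rodemich--Rumsey inequality) on the parameter domain $[0,1]\times[0,T]$. Choosing $p$ large enough that $p(\tfrac14-\delta)>2$ for the fixed $\delta<\tfrac14$, the increment estimate above produces a continuous modification of $B^h$ together with a random H\"older constant $Y^h(p,T,\delta)\in L^p$ for which \eqref{int:holderstatement} holds, and the theorem furnishes the bound $\E[(Y^h)^p]\le C(p,T,\delta)$ where $C$ depends only on $p$, $T$, $\delta$, the dimension of the parameter space, and the constant $c(p,T)$ in the moment bound. Since $c(p,T)$ does not depend on $h$, neither does $C(p,T,\delta)$, which is precisely \eqref{int:yhass}. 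Continuity of $B^h$ on all of $[0,1]\times\real^+$ then follows by exhausting $\real^+$ with intervals $[0,T]$ and noting the modifications are consistent.

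The only genuine point requiring care is the $h$-uniformity, at the two places indicated: in the increment estimate, where one traces the constants back through \eqref{int:spacevarsg}, \eqref{int:timevarsg} and \eqref{int:dsg3}; and in the passage through Kolmogorov--Chentsov, where one must use the version in which the $L^p$-norm of the modulus of continuity is controlled by an explicit universal constant times the constant appearing in the moment bound, so that no hidden $h$-dependence creeps in. Everything else is a direct transcription of Lemma \ref{lemma:discSG5b} to the special case $u^h\equiv 1$.
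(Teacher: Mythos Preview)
Your proposal is correct and matches the paper's approach: the paper simply says that \eqref{int:holderstatement} ``follows directly from estimate \eqref{int:stochintreg} of Lemma \ref{lemma:discSG5b}'', which is exactly your specialization $u^h\equiv 1$. Note only that you need not re-run Kolmogorov--Chentsov yourself, since Lemma \ref{lemma:discSG5b} already records the post-Kolmogorov estimate \eqref{int:stochintreg} with the $h$-uniform moment bound on $Y$; citing that estimate directly gives \eqref{int:holderstatement} and \eqref{int:yhass} in one stroke.
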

with a constant which is independent of $h$. 
Moreover, we have for all  $1 \leq p < \infty$:
\begin{equation}\label{int:supEBh}
 \sup_h \E \Big[ \sup_{[0,1] \times [0,T]} \Big| \int_0^t \int_0^1 g^h_{t-s} (x,y) \; W(dy,ds)\Big|^p \Big] \; \leq C(p,\gamma , T) 
\end{equation}

\begin{proof}{Lemma}{lemma:discSG6}
The H\"older continuity statement \eqref{int:holderstatement} follows directly from estimate \eqref{int:stochintreg} of Lemma \ref{lemma:discSG5b}. For the second statement \eqref{int:supEBh},  we discretize time into $m$ intervals of size $\frac{T}{m}$ and call the time nodes $t_i := \frac{iT}{m}$. We choose for convenience the spatial grid point $z=\frac{1}{2}$ in the time-discrete stochastic integral and calculate
\begin{equation}
\begin{aligned}
 \sup_{[0,1] \times [0,T]} \Big| B^h(x,t) \Big| 
  &\overset{\eqref{int:holderstatement} }{\leq} \max_{i=1, \ldots, m}  \Big| B^h\left(\frac{1}{2}, t_i\right)\Big| + Y^h \left( \left(\frac{1}{2}\right)^{\frac{1}{2}-\delta} + \left( \frac{T}{m}\right)^{\frac{1}{4}-\delta}\right) \\
    &\leq \max_{i=1, \ldots, m}  \Big| B^h\left(\frac{1}{2}, t_i\right)\Big| + c(p,T,\delta) Y^h  \\
 \end{aligned}
\end{equation}
As $m$ is finite, we conclude with Burkholder-Davis-Gundy
\begin{equation}
\begin{aligned}
\sup_h  \E\Big[ \sup_{[0,1] \times [0,T]} \Big| B^h(x,t) \Big|^p \Big] 
&\leq \sup_h  \left( 2^p \E\Big[ \max_{i=1, \ldots, m}  \Big| B^h\left(\frac{1}{2}, t_i\right)\Big|^p\Big]  + c(p,T,\delta) \E\Big[Y_h^p\Big]\right) \\
& \overset{\eqref{int:yhass}}{\leq } 2^p \sup_h \sum_{i=1}^{m} \E\Big[ \Big| B^h\left(\frac{1}{2}, t_i\right)\Big|^p\Big]  + c(p,T,\delta) \\
& \overset{BDG, \eqref{int:dsg3} }{\leq } c(p,\gamma, \delta, T)\\
\end{aligned}
\end{equation}
\end{proof}

\section{A priori estimates}
In this section, we prove a bound on the moments of our discrete solution, which is independent of the grid size $h$. As the nonlinearity $V'$ is not lipschitz continuous, we will first truncate the discrete solution and prove a moment bound on the truncated solutions $\utr$. Using a comparison principle, we can then control the discrete solutions and infer a moment bound on the nontruncated discrete solutions.

\subsection{Existence of mild solutions}

\paragraph{Existence of mild solutions to the continuous equation}
Let $D\subset \real$ be a bounded interval and $x\in D$. Given any bounded continuous initial condition $u_0$, the Green's function of  the heat equation can be expressed as 
\begin{equation}\label{int:sg}
g_t(x,y)\;  = \; \sum_{k=1}^{\infty} e^{-\lambda_k t} v_k(x) v_k(y)
\end{equation}
where $\lambda_k = \gamma \pi^2 k^2$ are the eigenvalues and $v_k(x) = \sin(\pi k x)$ the eigenfunctions of the Laplacian with the corresponding boundary conditions.
\begin{definition}
A random field $u$ is called \textit{mild solution } to the equation \eqref{int:generalac} if (i) $u$ is almost surely continuous, measurable, and (ii) satisfies for all $(x,t) \in D \times \real^+$
\begin{equation}
\begin{aligned}
u(x,t) \, = \; \int_D g_t(x,y) u_0(y) \, dy \; - \; \int_0^t \int_D g_{t-s}(x,y) V'(u(y,s)) \, dy ds \\
+ \sqrt{2 \sigma } \int_0^t \int_D g_{t-s}(x,y) \, W( dy, ds)
\end{aligned}
\end{equation}
\end{definition}
We recall the following existence result  by Gy\"ongy and Pardoux \cite{gyongypardoux}.
\begin{proposition}\label{prop:continuoussolution}
For every initial conditions $u_0 \in C([0,1])$, the SPDE \eqref{int:generalac}
admits a unique mild solution. 
Moreover, for all times $T >0$ and $p \geq 1$,
\begin{equation}\label{int:continousmoments}
 \E \left[ \sup_{[0,T] \times [0,1]} |u(x,t)|^p \right] \leq C(T,p)
\end{equation}
The random field $u$ is $2\alpha$-H\"older in space and $\alpha$-H\"older in time for every $\alpha \in \left(0,\frac{1}{4}\right)$.
\end{proposition}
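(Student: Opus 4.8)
The plan is to follow the classical route for semilinear SPDEs with a dissipative nonlinearity, which is the approach of Gy\"ongy and Pardoux \cite{gyongypardoux} (in the spirit of Krylov \cite{krylov}): a truncation argument to handle the non-Lipschitz drift $V'(q)=q^3-q$, combined with a priori moment bounds that are uniform in the truncation level. First I would fix $T>0$, replace $V'$ by a globally Lipschitz function $V'_N$ that coincides with $V'$ on $[-N,N]$ and grows linearly outside, and solve the truncated mild equation by a fixed-point argument: the map sending $v$ to $\int_0^1 g_t(x,y)u_0(y)\,dy-\int_0^t\int_0^1 g_{t-s}(x,y)V'_N(v(y,s))\,dy\,ds+\sqrt{2\sigma}\int_0^t\int_0^1 g_{t-s}(x,y)\,W(dy,ds)$ is, thanks to the continuous-semigroup analogues of the estimates in Lemmas \ref{lemma:discSG3}--\ref{lemma:discSG5b} (in particular $\int_0^t\int_0^1 g_s(x,y)^2\,dy\,ds\le c(\gamma,t)$ together with BDG for the stochastic convolution), a contraction on $L^p\bigl(\Omega;C([0,T]\times[0,1])\bigr)$ after passing to an exponentially weighted norm. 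This yields a unique $C$-valued mild solution $u_N$ for every $N$, and the same heat-kernel increment bounds plus Kolmogorov--Chentsov already show that $u_N$ is $2\alpha$-H\"older in space and $\alpha$-H\"older in time for every $\alpha<\tfrac14$.

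The crucial step is the a priori moment bound uniform in $N$. Here I would exploit the sign of the nonlinearity: $-q\,V'(q)=-q^4+q^2\le\tfrac14$, so $-V'$ is dissipative up to a bounded correction. Applying It\^o's formula to $\|u_N(t)\|_{L^2}^p$ (or a smooth regularisation thereof) and using this one-sided bound to absorb the otherwise uncontrollable cubic term, one closes a Gronwall inequality and obtains $\sup_N\E\bigl[\sup_{t\le T}\|u_N(t)\|_{L^2}^p\bigr]\le C(T,p)$. To upgrade to the $L^\infty$ bound \eqref{int:continousmoments}, I would feed this back into the mild formula: the stochastic convolution is controlled in $\E[\sup|\cdot|^p]$ by the continuous counterpart of \eqref{int:supEBh}, the initial term by $\|u_0\|_\infty$, and the drift term $\int_0^t\int_0^1 g_{t-s}(x,y)V'(u_N(y,s))\,dy\,ds$ by a bootstrap that trades the smoothing of $g_{t-s}$ against the $L^2$ control of $u_N$, giving $\sup_N\E\bigl[\sup_{[0,T]\times[0,1]}|u_N|^p\bigr]\le C(T,p)$.

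With the uniform bound in hand, patching is routine. Set $\tau_N:=\inf\{t:\|u_N(t)\|_\infty\ge N\}$; by uniqueness of the truncated problem $u_N=u_M$ on $[0,\tau_N\wedge\tau_M]$ for $M\ge N$, so $\tau_N$ is nondecreasing and $u:=u_N$ on $[0,\tau_N)$ is well defined. Chebyshev together with the uniform moment bound gives $\P(\tau_N\le T)\le C(T,p)N^{-p}\to 0$, hence $\tau_N\uparrow\infty$ a.s., and $u$ is a global mild solution of \eqref{int:generalac} inheriting the above H\"older regularity and moment bound. Uniqueness follows by localising two mild solutions with $\sigma_N:=\inf\{t:\|u(t)\|_\infty\vee\|\tilde u(t)\|_\infty\ge N\}$: on $[0,\sigma_N]$ both solve the $V'_N$-equation, where the contraction estimate forces them to agree, and $\sigma_N\uparrow\infty$. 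Finally, for the precise regularity statement, the continuous versions of \eqref{int:spacevarsg} and \eqref{int:timevarsg} combined with BDG and Kolmogorov--Chentsov make the stochastic convolution $2\alpha$-H\"older in $x$ and $\alpha$-H\"older in $t$ for all $\alpha<\tfrac14$, while $\int_0^1 g_t(x,y)u_0(y)\,dy$ is smooth for $t>0$ and the drift convolution is strictly more regular than the stochastic term once the moment bound on $u$ is available; thus $u$ has exactly the claimed regularity.

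The main obstacle is the second step: obtaining moment estimates that do not depend on the truncation parameter $N$. Everything hinges on using the dissipativity $-q\,V'(q)\le\tfrac14$ to control the cubic growth; without it the fixed-point construction would only be local in time (and possibly explode), and the patching argument would fail.
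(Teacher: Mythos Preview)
The paper does not actually prove this proposition: it is stated as a recalled result from Gy\"ongy and Pardoux \cite{gyongypardoux}, and the ``proof'' in the paper consists solely of the citation and a $\square$. Your sketch is a faithful outline of the Gy\"ongy--Pardoux strategy (truncation, fixed point, uniform a priori bounds via dissipativity, patching, Kolmogorov--Chentsov), so it is consistent with what the paper invokes and is correct in its broad strokes.

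One remark worth making, since you are reading the rest of the paper: for the \emph{discrete} moment bound (Proposition~\ref{prop:moments}) the authors do \emph{not} use the It\^o-formula-plus-dissipativity route you describe. Instead they rely on a comparison principle (Proposition~\ref{prop:comparison}) with one-sided truncations $V^\pm_Z$, which sandwiches $u^h$ between $\utrm$ and $\utrp$ and inherits the moment bound from the truncated solutions. This avoids the delicate point in your Step~2---applying It\^o's formula to a mild solution driven by space-time white noise and then bootstrapping the cubic drift term from $L^2$ to $L^\infty$---and is arguably cleaner in this setting, though it requires a comparison theorem as an external input. Your approach is the more classical PDE route and would work as well once the technicalities of the It\^o formula for mild solutions are handled (e.g.\ via Galerkin approximation), but be aware that the bootstrap you allude to needs $V'(u_N)\sim u_N^3$ controlled in at least $L^1_{t,x}$, so you will want $L^6$ rather than $L^2$ moments of $u_N$ as your starting point.
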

\hfill$\square$
\paragraph{Existence of mild solutions to discrete system}
\begin{proposition}\label{prop:discretesolution}
Given a suitable deterministic initial condition $u_0 \in C^4([0,1])$ with piecewise linear approximation $u^h(0)$, the rescaled system of SDEs \eqref{int:discretesystem}
admits a unique mild solution for all times $T$.
\end{proposition}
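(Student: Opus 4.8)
The plan is to recast the system of SDEs \eqref{int:discretesystem} as a finite-dimensional It\^o equation on $\real^N$ and invoke the standard existence-and-uniqueness theory for SDEs with locally Lipschitz coefficients, upgrading local solutions to global ones by means of a priori moment control. First I would write the drift of \eqref{int:discretesystem} as a map $b^h\colon\real^N\to\real^N$, $b^h(u)_i = \gamma (A^h_R u)_i - V'(u_i)$, where $A^h_R$ is the bounded linear operator from \eqref{int:Rpointstar} and $V'(q)=q^3-q$ acts componentwise, and the diffusion coefficient as the constant diagonal matrix $\sqrt{2\sigma/h}\,\mathrm{Id}$. Since $A^h_R$ is a fixed (time-independent) $N\times N$ matrix and $V'$ is a polynomial, $b^h$ is $C^\infty$ and in particular locally Lipschitz, so by the classical theorem (e.g. existence up to an explosion time) there is a unique strong solution $u^h$ on a maximal random interval $[0,\tau_\infty)$, with $\tau_\infty = \lim_{m\to\infty}\tau_m$, $\tau_m:=\inf\{t>0:|u^h(t)|\ge m\}$.

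The second step is to rule out explosion, i.e. to show $\tau_\infty=\infty$ almost surely. Here I would apply It\^o's formula to $|u^h(t)|^2=\sum_i u_i(t)^2$ (or to a Lyapunov function $1+|u^h|^2$). The linear term contributes $\langle u^h, \gamma A^h_R u^h\rangle = -\langle u^h,\gamma(-A^h_R)u^h\rangle \le 0$ because $-A^h_R$ is positive definite (the diagonal-dominance relation noted after \eqref{int:Rpointstar}); the nonlinear term contributes $-\sum_i u_i(u_i^3-u_i) = -\sum_i u_i^4 + \sum_i u_i^2 \le -|u^h|^2 + C$ after a trivial estimate; and the It\^o correction from the noise is the constant $2\sigma N/h$. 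Thus $d\,\E[|u^h(t\wedge\tau_m)|^2] \le (C + 2\sigma N/h - \,\text{something})\,dt$, giving a bound on $\E[|u^h(t\wedge\tau_m)|^2]$ uniform in $m$ on each finite interval $[0,T]$; letting $m\to\infty$ and using Chebyshev forces $\P(\tau_\infty\le T)=0$ for every $T$, hence $\tau_\infty=\infty$ a.s. This yields a global strong solution, and uniqueness is already inherited from the local theory.

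Finally I would note the equivalence between this strong-solution formulation and the mild-solution (integral) formulation \eqref{int:discreteintegralform}: since $A^h_R$ is a bounded generator, the semigroup $g^h_t=e^{-t\gamma A^h_R}$ is the ordinary matrix exponential, the variation-of-constants formula holds in the classical sense on $\real^N$, and the stochastic convolution $\int_0^t\int_0^1 g^h_{t-s}(x,y)\,W(dy,ds)$ reduces via \eqref{int:noise} to $\sqrt{h}\sum_k\int_0^t e^{-\lambda^h_k(t-s)}\langle v^h_k,dB(s)\rangle\,v^h_k$, a well-defined finite-dimensional It\^o integral; so any strong solution is a mild solution and conversely. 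I do not expect a genuine obstacle here: the only mildly delicate point is that the Lyapunov estimate must be done with the stopped process $u^h(\cdot\wedge\tau_m)$ before passing to the limit, so that the local martingale part is a true martingale with zero expectation — but this is routine. The one-sided sign of both the linear and the quartic term is exactly what makes the argument go through without any smallness assumption on $h$ or $R$.
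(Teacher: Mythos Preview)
Your proposal is correct and follows essentially the same approach as the paper: the paper's proof is the one-line remark that the drift satisfies the one-sided growth condition $-V'(u)\cdot u \le C(h)(1+\|u\|^2)$ (for fixed $h$) and then defers to classical SDE theory, while you simply spell out that classical argument in full (local Lipschitz $\Rightarrow$ local existence, then the Lyapunov/Khasminskii non-explosion test using exactly the same coercivity). Your additional remark on the equivalence of the strong and mild formulations via the bounded generator $A^h_R$ is a useful clarification that the paper leaves implicit.
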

As $-V(u) \cdot u \leq C(h) \left(1+ \|u\|^2_{L^2}\right)$, and $h$ is fixed, the result follows by classical SDE theory. \hfill$\square$

\subsection{Uniform bound on moments of truncated solutions}
We start with the bound on the moments of truncated solutions, which is separated in the $\sup \E$ and $\E \sup$-bounds part.

Define the truncated drift
\begin{equation}\label{int:defvtr}
 \vtr (u) = V'(u) \indicator_{[-Z,Z]} + V'(Z) \indicator_{[Z, \infty)} + V'(-Z) \indicator_{(-\infty , -Z)} 
\end{equation}
which is a bounded and globally lipschitz function. In particular, 
\begin{equation}\label{int:VboundM}
\vtr(u) \leq V'(Z) =: M 
\end{equation}

\paragraph{Equations with truncated drift}

A mild solution to \eqref{int:generalac} with nonlinearity $V$ replaced by $\vtr$ will be denoted by $u_Z$ and reads
\begin{equation}\label{int:uz}
\begin{aligned}
u_Z(x,t) \, = \; \int_o^1 g_t(x,y) u_0(y) \, dy \; - \; \int_0^t \int_0^1 g_{t-s}(x,y) \vtr (u(y,s)) \, dy ds \\
+ \sqrt{2 \sigma } \int_0^t \int_0^1 g_{t-s}(x,y) \, W( dy, ds)
\end{aligned}
\end{equation}
Similarly, a mild solution to \eqref{int:discreteintegralform} with nonlinearity $V$ replaced by $\vtr$ will be denoted by $\utr$ and reads
\begin{equation}\label{int:truncated}
\begin{aligned}
\utr(x,t) \, = \; \int_0^1 g^h_t(x,y) u_0(y) \, dy  \; - \; \int_0^t \int_0^1 g^h_{t-s}(x,y) \vtr (u(y,s)) \, dy ds \\
+ \sqrt{2 \sigma } \int_0^t \int_0^1 g^h_{t-s}(x,y)  \, W( dy, ds)
\end{aligned}
\end{equation}

\begin{remark}
Note that \eqref{int:truncated} differs from \eqref{int:discreteintegralform} only by the truncation of the drift term. In particular, for all times $t\leq  \tau^h_Z$,
\[
u^h(x,t) = \utr (x,t) \qquad \forall x \in [0,1]
\]
where we defined the stopping time
\begin{equation}\label{int:tauzh}
 \tau^h_Z = \inf_{t \in [0,T]} \lbrace \|\utr \|_{\infty} >Z\rbrace \; = \; \inf_{t \in [0,T]} \lbrace \exists x : |\utr (x,t)| >Z\rbrace
\end{equation}
\hfill$\square$
\end{remark}
It is convenient to write 
\begin{equation}\label{int:splitting}
 \utr = v^h(x,t) - w^h_Z(x,t)
\end{equation}
where 
\begin{equation}\label{int:vh}
\begin{aligned}
v^h(x,t) \, = \; \int_0^1 g^h_t(x,y) u_0(y) \, dy 
\end{aligned}
\end{equation}
is the solution to the homogeneous problem, i.e. the discrete heat-type equation with long-range stencil, and 
\begin{equation}\label{int:whz}
\begin{aligned}
w^h_Z(x,t) \, &=  \; \int_0^t \int_0^1 g^h_{t-s}(x,y) \vtr (u(y,s)) \, dy ds \; +  \; \sqrt{2 \sigma } \int_0^t \int_0^1 g^h_{t-s}(x,y)  \, W( dy, ds) \\
& \overset{\eqref{int:Bh}}{=}  \int_0^t \int_0^1 g^h_{t-s}(x,y) \vtr (u(y,s)) \, dy ds \; +  \sqrt{2 \sigma } B^h(x,t)
\end{aligned}
\end{equation}

\begin{lemma}\label{lemma:truncatedbound}
 Let $u^h_0$ be the piecewise linear approximation of the initial data $u_0$. Let  $\utr$ be the solution to the system of SDEs with truncated nonlinearity $\vtr$ as defined in \eqref{int:truncated}.
Then, for all times $T$ and all $p>1$ there exists a constant $C(p, \gamma, T,Z)$ independently of $h$, such that 
\begin{equation}
 \sup_{[0,T] \times [0,1]} \E \left[ |\utr|^p \right] \leq C(\gamma, T, p, Z)
\end{equation}
\end{lemma}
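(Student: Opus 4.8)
The plan is to use the decomposition $\utr = v^h - w^h_Z$ from \eqref{int:splitting} and bound each of the three resulting pieces separately: the homogeneous term $v^h$, the drift term in $w^h_Z$, and the stochastic term $\sqrt{2\sigma}B^h$. Since we are only after $\sup_{[0,T]\times[0,1]}\E[|\utr|^p]$ (the easier $\sup\E$ bound, not the $\E\sup$ bound), no Gronwall-type argument closing on the solution itself is needed here, because the truncated drift $\vtr$ is bounded by $M = V'(Z)$ and does not involve $\utr$ in a way that requires a fixed-point estimate — indeed \eqref{int:truncated} as written has $\vtr(u(y,s))$ with $u$ the (already constructed) continuous solution, so all three terms are explicit. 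Thus the estimate reduces to three deterministic/stochastic integral bounds uniform in $h$.

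First I would bound the homogeneous term: $v^h(x,t) = \int_0^1 g^h_t(x,y)u_0^h(y)\,dy$, and since $u_0 \in C^4$ its piecewise linear interpolant $u_0^h$ is bounded uniformly in $h$, while $\int_0^1 g^h_t(x,y)\,dy \le 1$ (the discrete semigroup is a contraction on $L^\infty$ by diagonal dominance / positivity of $-A^h_R$, or one can use the Cauchy–Schwarz bound with \eqref{int:dsg1}). Hence $\|v^h\|_\infty \le \|u_0\|_\infty$ uniformly in $h$, and its $p$-th moment is a deterministic constant. Second, the drift term: using $|\vtr| \le M$ from \eqref{int:VboundM} and Cauchy–Schwarz in $(y,s)$ together with the $L^2$ bound \eqref{int:dsg3}, $\big|\int_0^t\int_0^1 g^h_{t-s}(x,y)\vtr(u(y,s))\,dy\,ds\big| \le M\big(\int_0^t\int_0^1 g^h_{t-s}(x,y)^2\,dy\,ds\big)^{1/2} t^{1/2} \le M\sqrt{t}\,c(\gamma,T)^{1/2}$, again deterministic and uniform in $h$. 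Third, the stochastic term $B^h(x,t)$: its $p$-th moment is controlled uniformly in $h$ by \eqref{int:supEBh} of Lemma \ref{lemma:discSG6} (or directly by Burkholder–Davis–Gundy combined with \eqref{int:dsg3}), giving $\sup_{[0,T]\times[0,1]}\E[|B^h(x,t)|^p] \le C(p,\gamma,T)$.

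Finally I would combine: by the triangle inequality and the elementary bound $(a+b+c)^p \le 3^{p-1}(a^p+b^p+c^p)$,
\[
\E\big[|\utr(x,t)|^p\big] \le 3^{p-1}\Big( \|v^h\|_\infty^p + (M\sqrt{T}c(\gamma,T)^{1/2})^p (2\sigma)^{?} + (2\sigma)^{p/2}\E[|B^h(x,t)|^p]\Big),
\]
and taking the supremum over $(x,t)\in[0,T]\times[0,1]$ yields the constant $C(\gamma,T,p,Z)$, which depends on $Z$ only through $M = V'(Z)$ and is independent of $h$ because every ingredient bound was $h$-uniform.

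The main point to be careful about — rather than a deep obstacle — is ensuring all three constituent estimates are genuinely uniform in $h$; this is exactly what the earlier lemmas were set up to provide (Lemma \ref{lemma:discSG3} for the $L^2$ semigroup bounds, Lemma \ref{lemma:discSG6} for the stochastic convolution), and the key structural fact making it work is the lower bound $\lambda_k^h \ge 4\gamma k^2$ from \eqref{int:lbew}, valid precisely because $\zeta < \frac12$. One small subtlety is the $L^\infty$-contractivity of $g^h_t$ used for $v^h$: if one prefers not to invoke the maximum principle for $-A^h_R$, the Cauchy–Schwarz route via \eqref{int:dsg1} with $\|u_0^h\|_{L^2}\le\|u_0\|_\infty$ suffices and is cleaner. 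No Gronwall iteration is needed at this stage — that will only enter when one later removes the truncation via the comparison principle.
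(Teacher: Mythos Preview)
Your proposal is correct and follows essentially the same approach as the paper: split $\utr = v^h - w^h_Z$, bound $v^h$ by $\|u_0\|_\infty$ via contractivity of the discrete semigroup, bound the drift term using $|\vtr|\le M$ together with \eqref{int:dsg3}, and bound the stochastic term via BDG and \eqref{int:dsg3}. The only minor point is that for the $\sup\E$ estimate the direct BDG route you mention suffices and is what the paper uses --- invoking the stronger $\E\sup$ bound \eqref{int:supEBh} from Lemma~\ref{lemma:discSG6} is unnecessary here (and your displayed inequality has a stray $(2\sigma)^{?}$ that should be removed).
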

\begin{proof}{Lemma}{lemma:truncatedbound}
Notice first that the solution to the heat equation is globally bounded: $\|v(t)\|_{L^2} \leq e^{-\lambda_{min} t} \|u_0\|_{L^2}$, where $\lambda_{min}$ is the smallest eigenvalue under periodic boundary  conditions.
Recall that $A^h_R$ is the positive definite coefficient matrix which contains $J_R(j)$ on the $j$-th subdiagonal. Therefore, its exponential has eigenvalues bounded by one, which leads to
\begin{equation}\label{int:vhbound}
\sup_{x \in [0,1]} |v^h(x,t)| \; = \sup_{1 \leq i \leq \frac{1}{h}} \Big|( e^{-\gamma A^h_R} u_0^h)_i\Big| \leq  \; \sup_{1 \leq i \leq \frac{1}{h}}|u_0(x_i)| \leq \sup_{x \in [0,1]} |u_0| \qquad \forall t > 0
\end{equation}
As $\vtr$ is bounded by M (see \eqref{int:VboundM}), we can estimate the moments of the second term, $w^h_Z(x,t)$,  for all $t <T$, using BDG and the estimate \eqref{int:dsg3} on the discrete semigroup:
\begin{equation}
\begin{aligned}
2^{-p}\E \left[ |w^h_Z(x,t)|^p\right] &\leq \;\E \left[ \left| \int_0^t \int_0^1 g^h_{t-s}(x,y) \vtr (u(y,s)) \, dy ds \right|^p\right] 
+ \sqrt{ 2 \sigma} \E \left[ \left|B^h(x,t) \right|^p\right]  \\
&\leq  M^p T^{p/2} \left| \int_0^t \int_0^1 g^h_{t-s}(x,y) \, dy ds \right|^{p/2} +  C_p \left|\int_0^t \int_0^1 g^h_{t-s}(x,y)^2  dy ds \right|^{p/2} \\
&\leq C(p,\gamma, T,Z)
\end{aligned}
\end{equation}
\end{proof}

\begin{lemma}\label{lemma:truncatedboundEsup}
 Let $u^h_0$ be the piecewise linear approximation of the initial data $u_0$. Let  $\utr$ be the solution to the system of SDEs with truncated drift \eqref{int:truncated}.
Then, for all times $T$ and all $p>1$ there exists a constant $C(p, \gamma, T,Z)$ independently of $h$, such that 
\begin{equation}\label{int:Esuputr}
\sup_h \E \left[ \sup_{[0,T] \times [0,1]} |\utr|^p \right] \leq C 
\end{equation}
\end{lemma}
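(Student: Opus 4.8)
The plan is to bound the supremum of $\utr = v^h - w^h_Z$ over $[0,T]\times[0,1]$ by splitting into the deterministic homogeneous part $v^h$ and the inhomogeneous part $w^h_Z$, treating each separately. For the first part, \eqref{int:vhbound} already gives a deterministic, $h$-independent bound $\sup_{[0,T]\times[0,1]}|v^h(x,t)|\leq \sup_{[0,1]}|u_0|$, so that piece contributes a constant and requires no further work. Everything therefore reduces to controlling $\E\left[\sup_{[0,T]\times[0,1]}|w^h_Z(x,t)|^p\right]$, where $w^h_Z$ is the sum of a drift convolution term with the bounded integrand $\vtr$ (bounded by $M$, see \eqref{int:VboundM}) and the stochastic convolution $\sqrt{2\sigma}B^h(x,t)$.

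For the stochastic term, I would invoke Lemma \ref{lemma:discSG6} directly: \eqref{int:supEBh} already states $\sup_h\E\left[\sup_{[0,1]\times[0,T]}|B^h(x,t)|^p\right]\leq C(p,\gamma,T)$, which is exactly the $h$-independent bound needed. For the deterministic drift convolution $D^h(x,t):=\int_0^t\int_0^1 g^h_{t-s}(x,y)\vtr(u(y,s))\,dy\,ds$, I would first use $|\vtr|\leq M$ together with the $L^1$-type bound on the discrete semigroup (the positive-definiteness of $A^h_R$ gives $\int_0^1 g^h_t(x,y)\,dy\leq 1$, hence $|D^h(x,t)|\leq MT$ pointwise); this already yields an $h$-independent sup-bound for the drift part. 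Combining, $\sup_{[0,T]\times[0,1]}|\utr|\leq \sup|u_0| + MT + \sqrt{2\sigma}\sup_{[0,T]\times[0,1]}|B^h|$, and taking $p$-th moments with the elementary inequality $(a+b+c)^p\leq 3^{p-1}(a^p+b^p+c^p)$ and applying \eqref{int:supEBh} gives \eqref{int:Esuputr}.

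An alternative, if one prefers to avoid the crude pointwise bound $MT$ on the drift term and instead match the Hölder-continuity machinery used for $B^h$, would be to establish a Hölder estimate for $D^h$ in the spirit of Lemma \ref{lemma:discSG5b} (applied with $u^h$ replaced by the bounded deterministic field $\vtr(u)$, whose $p$-th moments trivially satisfy \eqref{int:uhassS} with $C=M^p$), then run the same time-and-space discretization argument as in the proof of Lemma \ref{lemma:discSG6}: bound the sup by a maximum over finitely many grid points plus a Hölder remainder, control the finitely many point-evaluations by Lemma \ref{lemma:truncatedbound} (the $\sup\E$ bound), and absorb the remainder using the $L^p$-moment bound on the Hölder random variable. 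Either route works; the first is shorter.

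The main obstacle is purely bookkeeping: making sure the constant genuinely does not depend on $h$. Every ingredient that could hide an $h$-dependence — the semigroup $L^1$ and $L^2$ bounds \eqref{int:dsg1}–\eqref{int:dsg4}, the stochastic convolution sup-bound \eqref{int:supEBh}, and the deterministic bound \eqref{int:vhbound} — has already been proved $h$-uniformly earlier in the excerpt (using $\zeta<\tfrac12$ and the eigenvalue lower bound \eqref{int:lbew}), so the proof is essentially an assembly of these pieces; the only care needed is that the discretization-into-$m$-intervals step, if used, keeps $m$ fixed and finite independently of $h$, exactly as in Lemma \ref{lemma:discSG6}.
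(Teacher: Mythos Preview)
Your proposal is correct and follows essentially the same route as the paper: split $\utr=v^h-w^h_Z$ via \eqref{int:splitting}, bound $v^h$ deterministically by \eqref{int:vhbound}, then handle the drift convolution using $|\vtr|\le M$ together with a semigroup bound and the stochastic convolution via \eqref{int:supEBh} of Lemma~\ref{lemma:discSG6}. The only cosmetic difference is that the paper controls the drift term through the $L^2$ bound \eqref{int:dsg3} (yielding $c(\gamma)M^pT^{p/4}$) rather than your pointwise $L^1$ bound $MT$; either estimate suffices and is $h$-independent.
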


\begin{proof}{Lemma}{lemma:truncatedboundEsup}
Thanks to \eqref{int:splitting}, we can write 
\begin{equation}\label{int:truncstart}
\begin{aligned}
\E \Big[ \sup_{[0,1] \times [0,T]} \left| \utr\right|^p \Big] \; \leq \; 2^p \E \left[ \sup_{[0,T] \times [0,1]} |v^h(x,t)|^p\right] + 2^p \E \left[ \sup_{[0,T] \times [0,1]} | w^h_Z(x,t)|^p \right]
\end{aligned}
\end{equation}
As seen in \eqref{int:vhbound}, for all $t>0$ we have
\begin{equation}\label{int:uzero}
 \E \left[ \sup_{[0,T] \times [0,1]} |v^h(x,t)|^p \right] \leq \E \left[ \sup_{x \in [0,1]} |u_0(x)|^p \right] \leq c
\end{equation}
Therefore, it remains to bound 
\begin{equation}
 \E \left[\sup_{[0,T] \times [0,1]} | w^h_Z(x,t)|^p \right] \;  \leq  \; 2^p( (I) + (II))
\end{equation}
where
\begin{equation}\label{int:vtrterm}
(I) =   \E \left[\sup_{[0,T] \times [0,1]} \left| \int_0^t \int_0^1 g^h_{t-s}(x,y) \vtr (u(y,s)) \, dy ds \right|^p \right] \overset{\eqref{int:dsg3}}{\leq} \; c(\gamma)  M^p  T^{p/4} 
\end{equation}
and $(II) = \sqrt{2 \sigma } \E \left[\sup_{[0,T] \times [0,1]} | B^h(x,t)|^p \right]$ with $B^h(x,t)$ as defined in \eqref{int:Bh}. We apply Lemma \ref{lemma:discSG6} to $(II)$ and get
\begin{equation}\label{int:Bhterm}
(II) =    \sqrt{2 \sigma }  \E \left[\sup_{[0,T] \times [0,1]} \left|  B^h(x,t)    \right|^p \right] \leq c(p,\gamma , t)
\end{equation}
We conclude from \eqref{int:uzero}, \eqref{int:vtrterm} and \eqref{int:Bhterm}
\begin{equation}
\begin{aligned}
\E \Big[ \sup_{[0,1] \times [0,T]} \left| \utr\right|^p \Big] \; \leq \; c(p,\gamma, T, Z)
\end{aligned}
\end{equation}
As the constant is independent of $h$, \eqref{int:Esuputr} follows.
\end{proof}

\subsection{Uniform moment bound without truncation}
We use the following comparison theorem to derive from Lemma \ref{lemma:truncatedbound} and Lemma \ref{lemma:truncatedboundEsup} the uniform moment bound.

\begin{proposition}\label{prop:comparison}
Let $u_1$ be the solution to
\begin{equation}
 \begin{aligned}
u^h_1(x,t) \, = \; \int_0^1 g^h_t(x,y) u^h_0(y) \, dy  \; - \; \int_0^t \int_0^1 g^h_{t-s}(x,y) V_1((u^h_1(y,s)) \, dy ds \\
+ \sqrt{2 \sigma } \int_0^t \int_0^1 g^h_{t-s}(x,y) \, W( dy, ds) \qquad \textup{ on } [0,1] \times \real^+
\end{aligned}
\end{equation}
with initial condition $u_1(x,0)$
and $u_2$ be the solution to 
\begin{equation}
 \begin{aligned}
u^h_2(x,t) \, = \; \int_0^1 g^h_t(x,y) u^h_0(y) \, dy  \; - \; \int_0^t \int_0^1 g^h_{t-s}(x,y) V_2((u^h_2(y,s)) \, dy ds \\
+ \sqrt{2 \sigma } \int_0^t \int_0^1 g^h_{t-s}(x,y) \, W( dy, ds)  \qquad \textup{ on } [0,1] \times \real^+
\end{aligned}
\end{equation}
with initial condition $u_2(x,0)$, and both equations are
subject to the same boundary conditions.
Suppose that one of the two verifies existence and uniqueness. 

If $V_1 \leq V_2$  holds and the initial conditions satisfy $u_1(x,0) \leq u_2(x,0)$, then, for all $t$ and $x$, 
\begin{equation}
 u^h_1(x,t) \leq u^h_2(x,t) \qquad \textup{ almost surely }
\end{equation}
\end{proposition}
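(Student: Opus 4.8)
The plan is to reduce the comparison principle for the mild (integral) formulation to a finite-dimensional comparison statement for the system of SDEs \eqref{int:discretesystem}, for which classical results are available. The key observation is that the discrete semigroup $g^h_t$ is generated by the matrix $-\gamma A^h_R$, whose off-diagonal entries are the weights $J_R(j) \geq 0$; hence $e^{-t\gamma A^h_R}$ has nonnegative entries (it is order-preserving on $\real^N$), and the mild solution is exactly the vector-valued process solving \eqref{int:discretesystem} with $V'$ replaced by $V_i'$, $i=1,2$. So I would first make this identification precise: for fixed $h$, $u^h_1$ and $u^h_2$ are the unique solutions of the $N$-dimensional SDE systems
\begin{equation*}
du_{1,i}(t) = -\gamma A^h_R u_{1,i}(t)\, dt - V_1'(u_{1,i}(t))\, dt + \sqrt{\tfrac{2\sigma}{h}}\, dB_i(t),
\end{equation*}
and likewise for $u_2$ with $V_2$, driven by the \emph{same} Brownian motions $B_i$, with $u_{1,i}(0) \le u_{2,i}(0)$.

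Next I would invoke a comparison theorem for multidimensional SDEs of the form $dX = (LX + b(X))dt + dB$ where $L$ has nonnegative off-diagonal entries (a quasimonotone / cooperative drift) and the noise is additive and shared. The standard route is: set $D_i(t) := u_{2,i}(t) - u_{1,i}(t)$, so that $D_i(0) \ge 0$, and $D$ satisfies the \emph{pathwise} (no stochastic integral) ODE
\begin{equation*}
\frac{d}{dt} D_i(t) = -\gamma (A^h_R D(t))_i + \big(V_1'(u_{1,i}(t)) - V_2'(u_{2,i}(t))\big).
\end{equation*}
Writing $V_1'(u_{1,i}) - V_2'(u_{2,i}) = \big(V_1'(u_{1,i}) - V_1'(u_{2,i})\big) + \big(V_1'(u_{2,i}) - V_2'(u_{2,i})\big)$, the second bracket is $\ge 0$ since $V_1 \le V_2$ (here one uses $V_1' \le V_2'$ pointwise, which is the precise form of the hypothesis one needs, or more carefully one argues with the monotone structure so that only the sign on the relevant set matters), and the first bracket equals $-c_i(t) D_i(t)$ for a bounded measurable $c_i(t)$ obtained from the mean value theorem applied to $V_1'$ along the segment between $u_{1,i}$ and $u_{2,i}$. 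Since $V$ is a fixed smooth double-well and the solutions have finite moments (Proposition \ref{prop:continuoussolution}, Lemma \ref{lemma:truncatedbound}), $c_i$ is pathwise locally bounded. The usual trick is then to consider $(D_i)_- = \max(-D_i,0)$, multiply the equation by $-(D_i)_-$ (or use $(D_i)_-^2$), sum over $i$, and use that $-A^h_R$ together with the off-diagonal nonnegativity gives $\sum_i (A^h_R D)_i (D_i)_- \ge 0$ whenever... — more precisely, one uses that for a matrix with nonnegative off-diagonal entries and nonpositive row sums (the diagonal-dominance relation $J_R(0) = 2\sum_{j\ge1}J_R(j)$ stated after \eqref{int:Rpointstar}) the quadratic form $\sum_i (A^h_R D)_i (D_i)_-$ has the right sign, yielding a Gronwall inequality $\frac{d}{dt}\sum_i (D_i)_-^2 \le C(t)\sum_i (D_i)_-^2$ with $\sum_i (D_i)_-^2|_{t=0}=0$, whence $(D_i)_- \equiv 0$ and $u^h_1 \le u^h_2$ almost surely.

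The main obstacle is making the quasimonotonicity / sign argument rigorous despite the non-Lipschitz nonlinearity $V'$. There are two clean ways around it: (a) run the whole argument for the \emph{truncated} drifts $\vtr$ (globally Lipschitz, Lemma uses \eqref{int:VboundM}), obtain comparison for $\utr$, and then pass $Z \to \infty$ using that for $t \le \tau^h_Z$ the truncated and untruncated solutions coincide together with the moment bounds of Lemmas \ref{lemma:truncatedbound}–\ref{lemma:truncatedboundEsup} to send the stopping time to infinity; or (b) localize directly via a sequence of stopping times $\tau_n = \inf\{t: \max_i(|u_{1,i}(t)| \vee |u_{2,i}(t)|) > n\}$, on each of which $V_1', V_2'$ restricted to $[-n,n]$ are Lipschitz, apply the standard comparison theorem on $[0,\tau_n]$, and let $n\to\infty$ using $\tau_n \to \infty$ a.s. I would take route (b) as it is the most self-contained, invoking a classical one-dimensional-style comparison theorem componentwise combined with the cooperative coupling, or simply cite the multidimensional comparison result (e.g. the Geiß–Manthey / Kotelenez type statement, or adapt Gyöngy–Pardoux); the only genuinely new point relative to the literature is checking the cooperativity of the long-range stencil, which is immediate from $J_R(j) \ge 0$.
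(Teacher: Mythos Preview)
The paper does not actually prove this proposition: its entire ``proof'' is the single sentence ``This result is taken from \cite{geissmanthey}.'' So your proposal goes well beyond what the paper does; you supply a genuine argument where the paper only gives a citation (and you even mention the Gei\ss--Manthey reference yourself as an alternative).

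Your outline is the standard and correct one for cooperative SDE systems with shared additive noise: pass to the finite-dimensional formulation \eqref{int:discretesystem}, note that the noise cancels in the difference $D=u_2-u_1$ so one is left with a pathwise ODE, exploit the nonnegativity of the off-diagonal entries of $-\gamma A^h_R$ (equivalently, the diagonal dominance relation $J_R(0)=2\sum_{j\ge1}J_R(j)$) to get quasimonotonicity, and close by a Gronwall argument on $\sum_i (D_i)_-^2$, localising via stopping times to handle the polynomial drift. That is exactly the mechanism behind the cited result, so in substance your route and the paper's are the same --- you are just unpacking the black box.

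One point to clean up: there is a sign slip in your decomposition. In the statement $V_1,V_2$ are already the drift functions (not potentials), so your SDE should read $-V_1(u_{1,i})$ rather than $-V_1'(u_{1,i})$; and with the minus sign in the mild formulation, the bracket $V_1(u_{2,i})-V_2(u_{2,i})$ is $\le 0$ under $V_1\le V_2$, not $\ge 0$ as you wrote. This feeds into the Gronwall step with the wrong sign. In fact, as written in the paper the hypothesis/conclusion pair is itself slightly inconsistent with the minus sign in the integral equation; the version one actually needs (and that Gei\ss--Manthey give) is: if the drift of the first equation is pointwise $\le$ the drift of the second and $u_1(0)\le u_2(0)$, then $u_1\le u_2$. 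Once you align the signs with that formulation, your argument goes through verbatim.
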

This result is taken from \cite{geissmanthey}. \hfill $\square$

Define now the one-sided truncations 
\begin{equation}
\begin{aligned}
V^+_Z(u) &= V'(u)\indicator_{( -\infty , Z]} + V'(Z)\indicator_{[Z, \infty)}\\
V^-_Z(u) &= V'(u)\indicator_{[-Z , \infty ) } + V'(-Z)\indicator_{( -\infty , -Z]}
\end{aligned}
\end{equation}
and denote by $(\utr)^+$ and $(\utr)^-$ the mild solutions to the associated truncated problems. 
Note that for fixed $Z$,
\begin{equation}
V^-_Z(u)  \leq  - \vtr (u) \leq - V^+_Z(u)
\end{equation}
such as
\begin{equation}
V^-_Z(u)  \leq  -  V' (u) \leq - V^+_Z(u)
\end{equation}
We see that the same moment bounds hold on $\utrp$ and $\utrm$  via comparison:
\begin{lemma}\label{lemma:maximal}
 Let $u^h_0$ be the piecewise linear approximation of the initial data $u_0$. Let  $\utrp$ and $\utrm$ the mild solutions to the system of SDEs with truncated drift $V^+_Z(u)$ and $V^-_Z(u)$, respectively.
Then, for all times $T$ and all $p>1$ there exists a constant $C(p, \gamma, T,Z)$ independently of $h$, such that 
\begin{equation}
\sup_h \E \left[ \sup_{[0,T] \times [0,1]} |\utrp|^p \right] \leq C(p, \gamma, T,Z)
\end{equation}
and \begin{equation}
\sup_h \E \left[ \sup_{[0,T] \times [0,1]} |\utrm|^p \right] \leq C(p, \gamma, T,Z)
\end{equation}
\end{lemma}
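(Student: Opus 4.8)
The plan is to reduce the statement for $\utrp$ and $\utrm$ to the two-sided case already treated in Lemmas \ref{lemma:truncatedbound} and \ref{lemma:truncatedboundEsup}, using the comparison principle of Proposition \ref{prop:comparison} to sandwich the one-sidedly truncated solutions between solutions for which a uniform moment bound is known. Indeed, by construction the one-sided drifts $V^+_Z$ and $V^-_Z$ coincide with the two-sided truncation $\vtr$ on $[-Z,Z]$ and with one of the constant values $V'(\pm Z)$ outside, so both are bounded and globally Lipschitz; hence for fixed $h$ the equations with drift $V^+_Z$ and $V^-_Z$ admit unique mild solutions $\utrp$, $\utrm$, and Proposition \ref{prop:comparison} applies (in its version with the same initial data and the same driving noise).

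Next I would carry out the comparison. Fix $Z' \geq 3Z$, say, and consider the doubly truncated drift $V_{trunc}'$ at level $Z'$. Since on $(-\infty,Z]$ the function $V^+_Z$ agrees with $V'$, and since for $u > Z$ we have $V^+_Z(u) = V'(Z)$ while $V'_{\mathrm{trunc},Z'}(u) = V'(u)$ for $Z < u \le Z'$ and $=V'(Z')$ for $u>Z'$, one checks the chain of pointwise inequalities $V'_{\mathrm{trunc},Z'}(u) \le V^+_Z(u)$ is \emph{not} automatic; instead the clean route is: observe $-V^+_Z(u) \ge -V'(u)$ fails too. So rather than compare against the two-sided truncation directly, I would compare $\utrp$ against the constant-drift solution: let $\bar u^h$ solve the same equation with drift replaced by the constant $-V'(Z)$ (an affine, hence globally Lipschitz, ``potential''), which by \eqref{int:vhbound}, \eqref{int:dsg3} and Lemma \ref{lemma:discSG6} satisfies $\sup_h \E[\sup_{[0,T]\times[0,1]}|\bar u^h|^p] \le C(p,\gamma,T,Z)$ exactly as in the proof of Lemma \ref{lemma:truncatedboundEsup}. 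Since $V^+_Z(u) \ge V'(Z)$ for all $u$ (the truncated quartic drift below $Z$ is bounded above by its endpoint value only after checking monotonicity of $V'$ near the relevant well — here one uses that $V'$ is increasing for $|u|$ large, which is why $Z$ is chosen large), Proposition \ref{prop:comparison} gives an upper bound $\utrp \le \bar u^{h,+}$ and a matching lower bound from a constant-drift comparison from below; combining the two yields $|\utrp| \le |\bar u^{h,+}| + |\bar u^{h,-}|$ pointwise a.s., and the uniform-in-$h$ moment bound follows by taking $\E\sup$ and the triangle inequality in $L^p$.

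The symmetric argument, exchanging the roles of the upper and lower constant barriers, handles $\utrm$. For the $\sup\E$ part of the statement (if one also wants it, paralleling Lemma \ref{lemma:truncatedbound}) the same sandwich works verbatim, replacing $\E\sup$ by $\sup\E$ and invoking \eqref{int:dsg3} and the $L^2$ estimates of Lemma \ref{lemma:discSG3} instead of Lemma \ref{lemma:discSG6}.

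The main obstacle is bookkeeping around the comparison: one must make sure that $V^+_Z$ and $V^-_Z$ really are ordered against the constant barrier drifts used as comparison functions, which is where the hypothesis that $Z$ exceeds the location of the wells (so that $V'$ is monotone on the relevant one-sided half-line) enters; without that one cannot bound $V^+_Z(u)$ below by $V'(Z)$ on all of $(-\infty,Z]$. Once the correct barriers are identified, everything else is a direct transcription of the proofs of Lemmas \ref{lemma:truncatedbound}--\ref{lemma:truncatedboundEsup}, since the comparison principle transfers the $h$-independence of the constant automatically.
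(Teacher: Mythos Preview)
Your proposal rests on a misreading of the one-sided truncations. By definition
\[
V^+_Z(u)=V'(u)\,\indicator_{(-\infty,Z]}+V'(Z)\,\indicator_{[Z,\infty)},\qquad
V^-_Z(u)=V'(u)\,\indicator_{[-Z,\infty)}+V'(-Z)\,\indicator_{(-\infty,-Z]},
\]
so $V^+_Z$ agrees with the \emph{untruncated} cubic $V'(u)=u^3-u$ on all of $(-\infty,Z]$; it is \emph{not} bounded and \emph{not} globally Lipschitz (and symmetrically for $V^-_Z$). In particular $V^+_Z(u)\to-\infty$ as $u\to-\infty$, so your claimed inequality $V^+_Z(u)\ge V'(Z)$ is false for every $Z$, no matter how large; what is true is the opposite bound $V^+_Z(u)\le V'(Z)$ (for $Z$ beyond the local maximum of $V'$), which yields $-V^+_Z\ge -V'(Z)$ and hence, via Proposition~\ref{prop:comparison}, only a \emph{lower} bound on $\utrp$ by a constant-drift solution. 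There is no constant $c$ with $c\le V^+_Z(u)$ for all $u$, so a two-sided constant-drift sandwich cannot be built, and the argument you sketch does not close.

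The paper itself gives no detailed proof here: it records the ordering $V^+_Z\le\vtr\le V^-_Z$ (equivalently $-V^-_Z\le-\vtr\le-V^+_Z$), invokes ``comparison'' and stops. That ordering, together with Proposition~\ref{prop:comparison}, yields $\utrm\le\utr\le\utrp$, which controls $(\utrp)_-$ and $(\utrm)_+$ by $|\utr|$ and hence by Lemma~\ref{lemma:truncatedboundEsup}; it does \emph{not} by itself control $(\utrp)_+$ or $(\utrm)_-$. To finish one has to use more than a bounded-drift comparison: either exploit the odd symmetry $-V^+_Z(-u)=V^-_Z(u)$ of the specific potential (so that $-\utrp$ solves the $V^-_Z$-equation driven by $-W$ with flipped initial datum, reducing the missing bound to the one already obtained), or rerun the a~priori argument of Lemmas~\ref{lemma:truncatedbound}--\ref{lemma:truncatedboundEsup} using that $-V^+_Z$ is one-sided Lipschitz and dissipative rather than bounded. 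Either route works; the point is that the comparison with constant drifts that you propose cannot supply both sides of the sandwich.
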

\hfill $\square$

\begin{proposition}\label{prop:moments}
 Let $u^h_0$ be the piecewise linear approximation of the initial data $u_0$. Let  $u^h$ be the solutions to the system of SDEs \eqref{int:discreteintegralform}.
Then, for all times $T>0$ and all $p>1$ there exists a constant $C(p, \gamma, T)$ independently of $h$, such that 
\begin{equation}
\sup_h \E \left[ \sup_{[0,T] \times [0,1]} |u^h(x,t)|^p \right] \leq C 
\end{equation}
\end{proposition}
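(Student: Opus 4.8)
The plan is to deduce the bound from the truncated estimates already obtained, using the comparison principle of Proposition \ref{prop:comparison} to squeeze $u^{h}$ between the two one-sided truncations $\utrp$ and $\utrm$. I would fix once and for all a truncation level $Z$ large enough that $V'(q)=q^{3}-q$ is nondecreasing on $[Z,\infty)$ and on $(-\infty,-Z]$; any $Z>1/\sqrt{3}$ works, since $V''(q)=3q^{2}-1$. For such a $Z$ the one-sided truncations obey the pointwise ordering
\begin{equation*}
 V^{+}_{Z}(u)\;\le\;V'(u)\;\le\;V^{-}_{Z}(u)\qquad\text{for all }u\in\real,
\end{equation*}
because each of $V^{+}_{Z},V^{-}_{Z}$ coincides with $V'$ on the half-line where $|u|$ is not too large, while on the complementary half-line it equals the constant continuation, which by the monotonicity just noted lies below, respectively above, $V'$ there.

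Next I would apply Proposition \ref{prop:comparison} twice, each time with the common deterministic initial datum $u^{h}_{0}$, the same periodic boundary conditions, and the same driving noise $W$, using that \eqref{int:discreteintegralform} has a unique mild solution $u^{h}$ by Proposition \ref{prop:discretesolution}, while $\utrp,\utrm$ are the (globally Lipschitz, hence unique) mild solutions of the one-sided truncated systems. Comparing the true drift with the truncated ones via the ordering above yields the pathwise sandwich
\begin{equation*}
 \utrm(x,t)\;\le\;u^{h}(x,t)\;\le\;\utrp(x,t)\qquad\text{a.s., for all }(x,t)\in[0,1]\times\real^{+},
\end{equation*}
hence $|u^{h}|\le|\utrp|+|\utrm|$ pointwise and therefore
\begin{equation*}
 \sup_{[0,T]\times[0,1]}|u^{h}|^{p}\;\le\;2^{p}\Big(\sup_{[0,T]\times[0,1]}|\utrp|^{p}+\sup_{[0,T]\times[0,1]}|\utrm|^{p}\Big).
\end{equation*}
Taking expectations and invoking Lemma \ref{lemma:maximal} for the (now fixed) value of $Z$ gives $\sup_{h}\E\big[\sup_{[0,T]\times[0,1]}|u^{h}|^{p}\big]\le 2^{p+1}\,C(p,\gamma,T,Z)=:C(p,\gamma,T)$, which is the assertion.

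I do not expect a genuine obstacle here: the statement is essentially a bookkeeping corollary of the truncated $\E\sup$-bounds and the comparison theorem. The only points demanding a little care are (i) fixing $Z$ large enough that the one-sided truncations really dominate, resp. are dominated by, $V'$ on all of $\real$, and (ii) checking that Proposition \ref{prop:comparison} applies in the present mild-formulation-with-discrete-semigroup setting driven by the same white noise, and that its existence/uniqueness proviso is met — which it is, by Proposition \ref{prop:discretesolution}. Once $Z$ is frozen, its appearance in the constant of Lemma \ref{lemma:maximal} is harmless, and the resulting constant depends only on $p,\gamma,T$.
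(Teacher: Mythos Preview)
Your proposal is correct and follows essentially the same route as the paper: sandwich $u^{h}$ between the one-sided truncations $\utrm\le u^{h}\le\utrp$ via the comparison principle (Proposition~\ref{prop:comparison}), then invoke the $\E\sup$-bounds of Lemma~\ref{lemma:maximal} for a fixed $Z$. Your write-up is in fact a bit more careful than the paper's, in that you make explicit the threshold $Z>1/\sqrt{3}$ needed for the pointwise ordering $V^{+}_{Z}\le V'\le V^{-}_{Z}$ to hold on all of $\real$; the paper uses the same sandwich and the same pointwise estimate $|u^{h}|\le|\utrm|+|\utrp|$ (written there via indicators of the sign of $u^{h}$), so there is no substantive difference.
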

\begin{proof}{Proposition}{prop:moments}
We apply the comparison theorem \ref{prop:comparison} for 
\begin{equation}
V^-_Z(u)  \leq  -  V' (u) \leq - V^+_Z(u)
\end{equation}
to get for all $(x,t)$
\begin{equation}
\utrm (x,t) \; \leq \;  \utr (x,t) \; \leq \; \utrp(x,t)
\end{equation}
where $\utrp$ and $\utrm$ satisfy truncated moment bounds as proved above. Note that 
\begin{equation}
|u^h(x,t)|  \leq \indicator_{u^h < 0} |\utrm(x,t)| + \indicator_{u^h > 0} |\utrp(x,t)|
\end{equation}
Therefore,
\begin{equation}
\sup_{[0,T] \times [0,1]} |u^h(x,t)|^p \leq \; 2^p \left( \sup_{[0,T] \times [0,1]}  |\utrm(x,t)|^p + \sup_{[0,T] \times [0,1]} |\utrp(x,t)|^p \right)
\end{equation}
which implies in particular
\begin{equation}
\sup_h \E \left[ \sup_{[0,T] \times [0,1]} |u^h(x,t)|^p \right] \leq C 
\end{equation}
\end{proof}

\section{Convergence of solutions}
As above in the truncated case, we write $u^h(x,t) = v^h(x,t) - w^h(x,t)$ where $v^h(x,t)$ as in \eqref{int:vh} and 
 $w^h(x,t) = u^h(x,t) - v^h(x,t)$, i.e.
\begin{equation}
\begin{aligned}
w^h(x,t) \, =   \; -\; \int_0^t \int_0^1 g^h_{t-s}(x,y) V'(u(y,s)) \, dy ds + \sqrt{2 \sigma } \int_0^t \int_0^1 g^h_{t-s}(x,y)  \, W( dy, ds)
\end{aligned}
\end{equation}

\subsection{Convergence of the homogeneous solutions}
\begin{proposition}\label{prop:homogeneous}
 Let the initial data $u_0$ be in $C^4$. Let $v^h(x,t)$ be the solution to the discrete homogeneous equation as defined in \eqref{int:vh}.
Then we have for all $t_0 >0$ and $\zeta < \frac{1}{2}$
\begin{equation}\label{int:c4heatconv}
\sup_{[t_0,T] \times [0,1]} | v^h(x,t) - v(x,t)|  \leq c(\gamma, T) h^{2 - 2\zeta}
\end{equation}
\end{proposition}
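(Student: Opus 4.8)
The plan is to obtain the estimate directly from the pointwise semigroup convergence of Proposition \ref{prop:sgconv}, which is precisely the ingredient assembled in the preceding subsection for this purpose. First I would subtract the integral representation \eqref{int:vh} of $v^h$ from the corresponding representation $v(x,t)=\int_0^1 g_t(x,y)u_0(y)\,dy$ of the continuum homogeneous solution, with $g_t$ as in \eqref{int:sg}, to get
\begin{equation*}
v^h(x,t)-v(x,t)=\int_0^1\big(g^h_t(x,y)-g_t(x,y)\big)\,u_0(y)\,dy .
\end{equation*}
Since $u_0\in C^4([0,1])$ is in particular bounded, this yields
\begin{equation*}
\sup_{x\in[0,1]}|v^h(x,t)-v(x,t)|\;\le\;\|u_0\|_{L^\infty([0,1])}\;\sup_{(x,y)\in[0,1]^2}|g^h_t(x,y)-g_t(x,y)| ,
\end{equation*}
and an application of Proposition \ref{prop:sgconv} bounds the right-hand side by $c(\gamma,t_0)\,\|u_0\|_{L^\infty}\,h^{2-2\zeta}$ for every $t\ge t_0$ and $\zeta<\tfrac12$.

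It then remains to take the supremum over $t\in[t_0,T]$. The constant $c(\gamma,t)$ produced by Proposition \ref{prop:sgconv} is a finite sum of terms each carrying a strictly negative power of $t$, hence it is maximised at $t=t_0$ and $\sup_{t\in[t_0,T]}c(\gamma,t)<\infty$; absorbing $\|u_0\|_{L^\infty}$ and this maximum into a single constant $c(\gamma,T)$ (depending also on $t_0$ and $u_0$) gives exactly \eqref{int:c4heatconv}.

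If one prefers to start the discrete homogeneous flow from the piecewise linear interpolant $u^h_0$ of $u_0$, as the notation of the main theorem suggests, rather than from $u_0$ itself, I would add and subtract $\int_0^1 g^h_t(x,y)u^h_0(y)\,dy$ and control the extra term by $\|u_0-u^h_0\|_{L^\infty}\int_0^1|g^h_t(x,y)|\,dy$, where the interpolation error of the $C^2$ function $u_0$ is $O(h^2)$ and $\int_0^1|g^h_t(x,y)|\,dy$ is uniformly bounded by Cauchy--Schwarz together with \eqref{int:dsg1}. Since $\zeta>0$ gives $2>2-2\zeta$, this contribution is $O(h^2)$, hence negligible compared with $h^{2-2\zeta}$, and does not affect the stated rate.

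I do not expect a genuine obstacle here: essentially all of the work is already contained in Proposition \ref{prop:sgconv}, which in turn rests on the eigenvalue convergence of Lemma \ref{lemma:ewconvergence} (rate $h^{2-2\zeta}$), the eigenvector convergence of Lemma \ref{lemma:evconvergence} and the consistency estimate \eqref{int:consistency}. The only point needing care is the degeneracy as $t\to0$: the semigroup bound blows up there, which is why the statement is confined to $[t_0,T]$ with $t_0>0$, and the $C^4$ regularity of $u_0$ does not remedy this, because the per-mode eigenvalue error grows like $k^4$ and it is only the factor $e^{-\lambda_k t_0}$ that makes the spectral sum summable.
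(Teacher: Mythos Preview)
Your argument is correct, but it is genuinely different from the paper's. You apply Proposition~\ref{prop:sgconv} pointwise to the kernel difference in $v^h-v=\int_0^1(g^h_t-g_t)u_0$, using only that $u_0$ is bounded; the $C^4$ hypothesis never enters your estimate directly. The paper instead passes to the Duhamel representation
\[
v(x,t)=u_0(x)+\int_0^t\!\!\int_0^1 g_s(x,y)\,\Delta u_0(y)\,dy\,ds,\qquad
v^h(x,t)=u^h_0(x)+\int_0^t\!\!\int_0^1 g^h_s(x,y)\,A^h_R u_0(y)\,dy\,ds,
\]
and splits $v^h-v$ into three pieces: the interpolation error $|u^h_0-u_0|=O(h^2)$, a consistency term $\int_0^t\!\int_0^1 g^h_s\,(A^h_R u_0-\Delta u_0)$ controlled by \eqref{int:consistency} (this is where $u_0\in C^4$ is actually used), and a semigroup term $\int_0^t\!\int_0^1 (g^h_s-g_s)\,\Delta u_0$ handled via Proposition~\ref{prop:sgconv}. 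Your route is shorter and shows that the stated rate on $[t_0,T]$ really only needs $u_0\in L^\infty$; the paper's decomposition, by contrast, puts the $C^4$ assumption to work through the consistency estimate and, because the kernel difference appears only under a time integral against $\Delta u_0$, is in principle better suited to pushing the bound down towards $t_0=0$ (even though the statement as written still carries the $t_0>0$ restriction). Your final remark about the $k^4$ growth of the per-mode eigenvalue error explains exactly why your direct approach cannot do without $t_0>0$.
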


\begin{proof}{Proposition}{prop:homogeneous}
We know by semigroup properties that $\partial_t g_t u_0 = g_t \Delta u_0$ and  $\partial_t g_t^h u_0 = g_t^h A^h_R u_0$, therefore
 \begin{equation}
\begin{aligned}
v(x,t) \, & = \; \int_0^1 g_t(x,y) u_0(y) \, dy \; = \;  u_0(x) \; + \;  \int_0^t \int_0^1 g_s(x,y) \Delta u_0(y) \, dy ds \\
v^h(x,t) \, & = \; \int_0^1 g^h_t(x,y) u^h_0(y) \, dy \; = \;  u^h_0(x) \; + \;  \int_0^t \int_0^1 g^h_s(x,y) A^h_R u_0(y) \, dy ds \\
\end{aligned}
\end{equation}
This means that 
 \begin{equation}\label{int:homconvc4}
\begin{aligned}
\sup_{[0,T] \times [0,1]} | v^h(x,t) - v(x,t)| &\leq \sup_{x \in [0,1]} |u^h_0(x) - u_0(x)| \\
		     & + \sup_{[0,T] \times [0,1]} \left|  \int_0^t \int_0^1 g^h_s(x,y)\big( A^h_R u_0(y) - \Delta u_0(y)\big) \, dy ds  \right| \\
		     & + \sup_{[0,T] \times [0,1]} \left|  \int_0^t \int_0^1 \big(g^h_s(x,y) - g_s (x,y)  \big) \Delta u_0(y) \, dy ds  \right| \\
\end{aligned}
\end{equation}
The first term in \eqref{int:homconvc4} can be estimated as
\begin{equation}\label{int:icconv}
\sup_{x \in [0,1]} |u^h_0(x) - u_0(x)| \leq c \cdot  h^2.
\end{equation}
By \eqref{int:consistency}, $A^h_R$ is a stencil on a uniform grid with consistency order 2.  Using \eqref{int:dsg3}, we get
 \begin{equation}\label{int:hompart2}
\begin{aligned}
\sup_{[0,T] \times [0,1]} \left|  \int_0^t \int_0^1 g^h_s(x,y)\big( A^h u_0(y) - \Delta u_0(y)\big) \, dy ds  \right| \leq c(\gamma, T) h^2
\end{aligned}
\end{equation}
For the last term, we evoke Proposition \ref{prop:sgconv} on the convergence of semigroups
 \begin{equation}\label{int:hompart3}
\begin{aligned}
 & \sup_{[0,T] \times [0,1]} \left|  \int_0^t \int_0^1 \big(g^h_s(x,y) - g_s (x,y)  \big) \Delta u_0(y) \, dy ds  \right| \\
  &\leq \;  \left\| \Delta u_0 \right\|_{L^2([0,T] \times [0,1])} \cdot  \sup_{ x \in [0,1]} \left( \int_0^{\infty} \int_0^1 \big(g^h_s(x,y) - g_s (x,y)  \big)^2 \, dy ds\right)^{1/2} \\
  &\leq c(\gamma, T )\cdot h^{2 - 2\zeta}
\end{aligned}
\end{equation}
Consequently, \eqref{int:icconv}, \eqref{int:hompart2} and \eqref{int:hompart3} give
 \begin{equation}
\begin{aligned}
\sup_{[0,T] \times [0,1]} | v^h(x,t) - v(x,t)| &\leq  c(\gamma, T) \left( h^{2 - 2\zeta} +  h^2 \right)
\end{aligned}
\end{equation}
which proves \eqref{int:c4heatconv}.
\end{proof}

\subsection{Convergence of truncated solutions}
Recal the splitting \eqref{int:splitting} where $ \utr = v^h(x,t) + w^h_Z(x,t)$.
Similarly, we split the solution to the continuous truncated equation \eqref{int:uz} as 
\begin{equation}
 u_Z(x,t) = v(x,t) + w_Z(x,t)
\end{equation}
where $v(x,t)$ is the solution to the heat equation and $w_Z$ the nonlinear term and the stochastic integral. 
After having showed convergence of $v^h(x,t)$ to  $v(x,t)$ in Proposition \ref{prop:homogeneous}, we now study the convergence of the  truncated nonlinear term and the stochastic convolution.

\begin{proposition}\label{prop:wtruncatedconv}
  Let $u^h_0$ be the piecewise linear approximation of the initial data $u_0$. Let  $\utr$ be the solution to the system of SDEs with truncated drift \eqref{int:truncated}, $v^h(x,t)$ be as in \eqref{int:vh}  and let $ w^h_Z(x,t) =  \utr - v^h(x,t)$
Then, for all times $T >0$, $\zeta < \frac{1}{2}$ and all $p>1$ there exists a constant $C(p, \delta, \gamma, T,Z)$ independently of $h$, and an exponent $\delta = \delta (p) >0$ such that
\begin{equation}
 \E \left[ \sup_{[0,T] \times [0,1]} |w^h_Z - w_Z|^p \right]^{1/p} \leq  C h^{\frac{1}{2} - \delta}
\end{equation}
\end{proposition}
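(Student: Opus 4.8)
The plan is to work from the mild formulations \eqref{int:truncated} and \eqref{int:uz}. Subtracting them and using the decomposition \eqref{int:splitting}, $\utr=v^h+w^h_Z$, together with its continuous analogue $u_Z=v+w_Z$, one obtains
\[
w^h_Z - w_Z \;=\; D_1 + D_2 + \sqrt{2\sigma}\,D_3,
\]
where $D_1(x,t)=\int_0^t\!\int_0^1\bigl(g^h_{t-s}-g_{t-s}\bigr)(x,y)\,\vtr(\utr(y,s))\,dy\,ds$ is the error from exchanging the discrete and continuous semigroups in the drift term, $D_2(x,t)=\int_0^t\!\int_0^1 g_{t-s}(x,y)\bigl(\vtr(u_Z(y,s))-\vtr(\utr(y,s))\bigr)\,dy\,ds$ is the error from evaluating the nonlinearity at the two different solutions, and $D_3(x,t)=\int_0^t\!\int_0^1\bigl(g^h_{t-s}-g_{t-s}\bigr)(x,y)\,W(dy,ds)$ is the error in the stochastic convolution. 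The terms $D_1$ and $D_3$ are linear in the kernel difference $g^h-g$ and will be bounded directly; $D_2$ involves the unknown $w^h_Z-w_Z$ itself (through $\utr-u_Z=(v^h-v)+(w^h_Z-w_Z)$) and will be absorbed by a Gronwall argument, the part $v^h-v$ being controlled by Proposition \ref{prop:homogeneous}.

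\smallskip

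The crucial input is a quantitative bound on the space--time $L^2$ and $L^1$ norms of the kernel difference, and the step I expect to be the main obstacle is handling the singularity of both kernels as $s\downarrow0$, precisely where the pointwise estimate of Proposition \ref{prop:sgconv} degenerates. I would split the time integral at a level $\rho=h^\beta$, $\beta>0$ to be chosen. On $\{t-s\le\rho\}$ I estimate the two kernels separately: the small-time $L^2$ bounds yielded by the proof of Lemma \ref{lemma:discSG3} (and their continuous counterparts) give $\int_0^\rho\!\int_0^1(g^h_r-g_r)^2\,dy\,dr\le c\sqrt\rho$, while Cauchy--Schwarz in $y$ together with $\int_0^1 g^h_r\,dy,\ \int_0^1 g_r\,dy\le c$ give $\int_0^\rho\!\int_0^1|g^h_r-g_r|\,dy\,dr\le c\rho$. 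On $\{t-s\ge\rho\}$ I use Proposition \ref{prop:sgconv}, whose constant deteriorates polynomially in $\rho^{-1}$ (as its proof shows), to get $\int_\rho^T\!\int_0^1(g^h_r-g_r)^2\,dy\,dr\le c(\gamma,T)\rho^{-2}h^{2(2-2\zeta)}$ and, similarly, $\int_\rho^T\!\int_0^1|g^h_r-g_r|\,dy\,dr\le c(\gamma,T)\rho^{-1/2}h^{2-2\zeta}$. Optimising $\beta$ in each case gives $\int_0^t\!\int_0^1(g^h_{t-s}-g_{t-s})^2\,dy\,ds\le c(\gamma,T)h^{\alpha}$ and $\int_0^t\!\int_0^1|g^h_{t-s}-g_{t-s}|\,dy\,ds\le c(\gamma,T)h^{\alpha'}$ for explicit $\alpha,\alpha'>0$; a purely spectral computation from the expansions \eqref{int:eigenvalueslongrange}--\eqref{int:eigenvector} would sharpen $\alpha$, but this is not needed for the statement. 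With the $L^1$ bound and $|\vtr|\le M$ (see \eqref{int:VboundM}) one gets $\sup_{[0,T]\times[0,1]}|D_1|\le cMh^{\alpha'}$ deterministically; with the $L^2$ bound and the Burkholder--Davis--Gundy inequality one gets $\E\bigl[|D_3(x,t)|^p\bigr]^{1/p}\le c\,h^{\alpha/2}$ for each fixed $(x,t)$. To pass from pointwise to uniform control of $D_3$, I would use that, being a stochastic convolution against $g^h-g$, it enjoys (via BDG and Lemma \ref{lemma:discSG4}, as in Lemmas \ref{lemma:discSG5b} and \ref{lemma:discSG6}) space--time H\"older regularity with exponents arbitrarily close to $\bigl(\tfrac12,\tfrac14\bigr)$ and moment bounds uniform in $h$; a chaining argument on a grid of polynomially small mesh then converts the pointwise rate into $\E\bigl[\sup_{[0,T]\times[0,1]}|D_3|^p\bigr]^{1/p}\le c\,h^{\alpha/2-\delta_1}$ with a loss $\delta_1=\delta_1(p)\to0$ as $p\to\infty$.

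\smallskip

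Finally, for $D_2$, global Lipschitz continuity of $\vtr$ (with constant $L=L(Z)$) and $\int_0^1 g_{t-s}(x,y)\,dy\le c$ yield
\[
|D_2(x,t)|\ \le\ cL\int_0^t\Bigl(\|v^h(\cdot,s)-v(\cdot,s)\|_\infty+\sup_{y\in[0,1]}|w^h_Z(y,s)-w_Z(y,s)|\Bigr)ds,
\]
and $\|v^h(\cdot,s)-v(\cdot,s)\|_\infty\le c\,h^{2-2\zeta}$ for all $s\in[0,T]$ by Proposition \ref{prop:homogeneous}. Put $\Phi(t):=\E\bigl[\sup_{s\le t}\sup_{x\in[0,1]}|w^h_Z(x,s)-w_Z(x,s)|^p\bigr]$, which is finite by Lemma \ref{lemma:truncatedboundEsup} and the analogous bound for the continuous solution. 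Taking the supremum over $[0,\tau]\times[0,1]$, raising to the $p$-th power, applying Jensen's inequality in the time integral above, and taking expectations, then inserting the bounds for $D_1$ and $D_3$ and using $h^{2-2\zeta},h^{\alpha'}\le h^{\alpha/2-\delta_1}$ for $h$ small, one arrives at
\[
\Phi(\tau)\ \le\ c(p,\gamma,T,Z,\delta)\,h^{(\alpha/2-\delta_1)p}\ +\ c(p,\gamma,T,Z)\int_0^\tau\Phi(s)\,ds.
\]
Gronwall's lemma gives $\Phi(T)\le c\,h^{(\alpha/2-\delta_1)p}$, i.e.\ $\E\bigl[\sup_{[0,T]\times[0,1]}|w^h_Z-w_Z|^p\bigr]^{1/p}\le c\,h^{\alpha/2-\delta_1}$; for $p$ large, $\alpha/2-\delta_1>0$ (and all $p\ge1$ then follow by Jensen), so writing $\alpha/2-\delta_1=\tfrac12-\delta$ with $\delta=\delta(p)>0$ gives the assertion.
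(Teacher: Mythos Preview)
Your approach is essentially the paper's: the same three–term split (semigroup error on the drift, nonlinearity difference, semigroup error on the stochastic convolution), Lipschitz continuity of $\vtr$ feeding a Gronwall inequality, and Proposition~\ref{prop:homogeneous} to control $v^h-v$. The only cosmetic difference is that the paper keeps the \emph{discrete} semigroup on the nonlinearity difference and pairs the kernel error with $\vtr(u_Z)$, whereas you do the reverse; either choice works.

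Two substantive remarks are worth making. First, your time–splitting at level $\rho=h^\beta$ is in fact more honest than what the paper writes: the paper simply cites Proposition~\ref{prop:sgconv} to bound $\int_0^t\!\int_0^1|g^h_{t-s}-g_{t-s}|^2\,dy\,ds$, but that proposition only gives a pointwise bound on $[t_0,\infty)$ with a constant blowing up as $t_0\downarrow 0$, so the passage is not literally justified there. Your splitting makes this step rigorous.

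Second, and conversely, your crude optimisation in $\beta$ produces an exponent $\alpha$ that is strictly below $1$ (e.g.\ $\alpha=\tfrac{4-4\zeta}{5}$ from balancing $\sqrt\rho$ against $\rho^{-2}h^{4-4\zeta}$), so your final rate $h^{\alpha/2-\delta_1}$ is genuinely weaker than $h^{1/2-\delta}$ with small~$\delta$. This still satisfies the proposition as \emph{stated} (any positive exponent can be written as $\tfrac12-\delta$ for some $\delta\in(0,\tfrac12)$), but it loses the near–optimal rate that the paper uses downstream. In the paper's argument the exponent $\tfrac12-\delta$ does not come from the kernel $L^2$ error at all: it comes from the H\"older–regularity / grid–reduction step for the stochastic term (your $D_3$), which contributes $Y^h\,h^{1/2-\delta}$ regardless of how good the pointwise kernel bound is. If you want the sharp rate, either carry out the direct spectral computation for $\int_0^t\!\int_0^1|g^h_s-g_s|^2\,dy\,ds$ (which gives an exponent $\ge 1$ and is what the paper tacitly assumes), or, more simply, note that the grid/H\"older step already forces the bottleneck $h^{1/2-\delta}$ and the kernel contribution is subdominant.
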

\begin{proof}{Proposition}{prop:wtruncatedconv}
By definition,
\begin{equation}
\begin{aligned}
w^h_Z(x,t) - w_Z(x,t) \, &=  \; \int_0^t \int_0^1 g^h_{t-s}(x,y) \vtr (\utr(y,s)) -   g_{t-s}(x,y) \vtr (u_Z(y,s)) \, dy ds \\
 &+ \sqrt{2 \sigma } \int_0^t \int_0^1 g^h_{t-s}(x,y) - g_{t-s}(x,y) \, W( dy, ds) 
\end{aligned}
\end{equation}
We have 
\begin{equation}\label{int:IuII}
\begin{aligned}
|w^h_Z(x,t) - w_Z(x,t)|^p \, \leq   \;  2^p \Big( (I) + (II)\Big) 
\end{aligned}
\end{equation}
with 
\begin{equation}
\begin{aligned}
(I) &:=  \left|\int_0^t \int_0^1 g^h_{t-s}(x,y) \vtr (u^h(y,s)) -   g_{t-s}(x,y) \vtr (u_Z(y,s)) \, dy ds\right|^p \\
(II)&:= (2 \sigma)^{p/2} \left|\int_0^t \int_0^1 g^h_{t-s}(x,y) - g_{t-s}(x,y) \, W( dy, ds)\right|^p
\end{aligned}
\end{equation}
We first estimate term (I), which we again split in two parts: 

\underline{Step 1: Estimates on term (I)}
Note that by \eqref{int:VboundM}
\begin{equation}\label{int:VboundM2}
 \left|\int_0^t \int_0^1  |\vtr (u_Z(y,s))|^2 \, dy ds\right|^{p/2} \leq  \left|\int_0^t  M^2  ds\right|^{p/2} = M^p t^{p/2}
\end{equation}
We split
\begin{equation}
\begin{aligned}
 (I) &\leq 2^p \Big( \left|\int_0^t \int_0^1 \left( g^h_{t-s}(x,y) -   g_{t-s}(x,y)\right) \vtr (u_Z(y,s)) \, dy ds\right|^p \\
  &+  \; \left|\int_0^t \int_0^1 g^h_{t-s}(x,y) \left( \vtr (\utr(y,s)) -  \vtr (u_Z(y,s)) \right) \, dy ds\right|^p \Big) = 2^p((Ia) + (Ib)) \\
\end{aligned}
\end{equation}
For $(Ia)$, we use Cauchy-Schwarz and employ Proposition \ref{prop:sgconv}
\begin{equation}\label{int:gvconv1}
\begin{aligned}
(Ia) &= \left|\int_0^t \int_0^1 \left( g^h_{t-s}(x,y) -   g_{t-s}(x,y)\right) \vtr (u_Z(y,s)) \, dy ds\right|^p \\
&\leq \left(\int_0^t \int_0^1 \left| g^h_{t-s}(x,y) -   g_{t-s}(x,y)\right|^2 \, dy ds\right)^{p/2} \; 
\left(\int_0^t \int_0^1  |\vtr (u_Z(y,s))|^2 \, dy ds\right)^{p/2} \\
&\overset{\eqref{int:VboundM2}}{\leq} C(p, \gamma, t,  lip(\vtr) )  h^{2p( 1 - \zeta)}
\end{aligned}
\end{equation}
For $(Ib)$ we estimate, using lipschitz continuity of $\vtr$,
\begin{equation}\label{int:gvconv2}
\begin{aligned}
(Ib) &= \left|\int_0^t \int_0^1 g^h_{t-s}(x,y) \Big( \vtr (\utr(y,s)) -  \vtr (u_Z(y,s)) \Big) \, dy ds\right|^p \\
&\leq \left|\int_0^t \int_0^1 (g^h_{t-s}(x,y))^2 \, dy ds\right|^{p/2} \; \left|\int_0^t \int_0^1 \left( \vtr (\utr(y,s)) \; -  \vtr (u_Z(y,s)) \right)^2 \, dy ds\right|^{p/2} \\
&\overset{\eqref{int:dsg3}}{\leq} C(p,\gamma, t, lip(\vtr)) \;  \left(\int_0^t \int_0^1 \left| \utr (y,s) - u_Z(y,s) \right|^2 \, dy ds\right)^{p/2} \\
\end{aligned}
\end{equation}
Using that $\utr$ solves \eqref{int:truncated} and $u_Z$ solves \eqref{int:uz}, we can estimate the RHS in \eqref{int:gvconv2} further as
\begin{equation}\label{int:gvconv3}
\begin{aligned}
\left|\int_0^t \int_0^1 \left| \utr (y,s) - u_Z(y,s) \right|^2 \, dy ds\right|^{p/2} 
\leq t^{\frac{p-2}{p}} \int_0^t \int_0^1 \left| \utr (y,s) - u_Z(y,s) \right|^p \, dy ds \\
\leq 2^p T^{\frac{p-2}{p}} \Big( t \sup_{[0,1] \times [0,T]} |v(y,s) - v^h(y,s)|^p + \, \int_0^t   \sup_{[0,1] \times [0,\tilde{s}]}  \left| w_Z(y,s) - w^h_Z(y,s) \right|^p \, d\tilde{s} \Big) \\
\end{aligned}
\end{equation}
Summing up estimates \eqref{int:gvconv1} - \eqref{int:gvconv3}, we get the following estimate for (I): 
\begin{equation}
\begin{aligned}
 (I) &\leq  2^p \Big( (Ia) + (Ib) \Big) \\
  &\lesssim   h^{2p( 1 - \zeta)} \; + \; \sup_{[0,1] \times [0,T]} |v(y,s) - v^h(y,s)|^p 
   + \,  \int_0^t   \sup_{[0,1] \times [0,\tilde{s}]}  \left| w_Z(y,s) - w^h_Z(y,s) \right|^p \, d\tilde{s} \\
\end{aligned}
\end{equation}
where we used the notation $\lesssim$ to avoid writing out the constant $ C(p, \gamma, T, lip(\vtr))$ on the RHS.
As the lipschitz constant of $\vtr$ depends only on $Z$, we can write $ C(p, \gamma, T, lip(\vtr)) =  C(p, \gamma, T, Z)$.

\underline{Step 2: Estimates on term (II)}
We have 
\begin{equation}
\begin{aligned}
(II)&= (2 \sigma)^{p/2} \left|\int_0^t \int_0^1 g^h_{t-s}(x,y) - g_{t-s}(x,y) \, W( dy, ds)\right|^p\\
&=   (2 \sigma)^{p/2} \left| B^h(x,t) - B(x,t)\right|^p\\
\end{aligned}
\end{equation}
Thanks to Lemma \ref{lemma:discSG6} and its continuous counterpart, we can estimate
\begin{equation}
\begin{aligned}
\left| B^h(x,t) - B(x,t) - B^h(x',t) + B(x',t)\right| \leq Y^h |x - x'|^{\frac{1}{2} - \delta}
\end{aligned}
\end{equation}
Therefore, with $\theta = \frac{1}{2} - \delta$
\begin{equation}
\begin{aligned}
\sup_{x\in [0,1]} \left| B^h(x,t) - B(x,t)\right| \; \leq \max_{1 \leq i \leq 1/h} | B^h_i(t) - B(x_i , t)| + Y^h h^{\theta}
\end{aligned}
\end{equation} 
This is independent of $t$, so we get
\begin{equation}\label{int:bh}
\begin{aligned}
\sup_{[0,1]\times [0,T]} \left| B^h(x,t) - B(x,t)\right| \; \leq \max_{1\leq i \leq 1/h} \sup_{t\in [0,T]}  | B^h_i( t) - B(x_i , t)| + Y^h h^{\theta}
\end{aligned}
\end{equation} 
Recall that $(II) =   (2 \sigma)^{p/2} \left| B^h(x,t) - B(x,t)\right|^p$, so we take \eqref{int:bh} to the $p$-th power to get
\begin{equation}
\begin{aligned}
\E \left[ \sup_{[0,1]\times [0,T]} (II) \right] 
&\leq \; 2^p \E \left[\max_{i = 1 \ldots 1/h} \sup_{t \in [0,T]}  | B^h_i (t) - B(x_i , t)|^p\right]   + (2\sigma)^{p/2} \E \left[   Y^p_h \right] h^{\theta p}  \\
&\overset{\eqref{int:yhass}}{\leq } \; 2^p \sup_{x \in [0,1]} \underbrace{\E \left[ \sup_{t \in [0,T]}  | B^h(x_i, t) - B(x_i , t)|^p \right]}_{\ast}  +  c(p,T, \delta , \sigma) h^{\theta p}  \\
  &\overset{BDG}{\leq } c(p) \left| \int_0^T \int_0^1 \left(g^h_{t-s}(x_i,y) - g_{t-s}(x_i,y)\right)^2 \, dy ds \right|^{\frac{p}{2}} + c(p,T, \delta , \sigma) h^{\theta p}  \\ 
  &\overset{\textup{Prop.}\ref{prop:sgconv} }{\leq} c(p , \gamma , \delta , \sigma, T ) \left( h^{2p(1-\zeta)}  +  h^{\frac{p}{2} - \delta p } \right) \\
\end{aligned}
\end{equation}
\underline{Step 3: Conclusion}
To sum up, we know with \eqref{int:IuII} that $\E \left[ \sup_{[0,1]\times [0,T]} |w^h_Z(x,t) - w_Z(x,t)|^p \right] \leq \E \left[ \sup_{[0,1]\times [0,T]} (I) + (II)  \right]$ and
\begin{equation}
\begin{aligned}
\E \left[ \sup_{[0,1]\times [0,T]} (I) + (II)  \right] 
&\lesssim \left( h^{2p(1-\zeta)}  +  h^{\frac{p}{2} - \delta p} \right) 
+ \E \left[ \sup_{[0,1]\times [0,T]} |v(y,s) - v^h(y,s)|^p  \right]\\
 &+ \,  \E \left[ \int_0^t   \sup_{[0,1] \times [0,\tilde{s}]}  \left| w_Z(y,s) - w^h_Z(y,s) \right|^p \, d\tilde{s}  \right]
 \end{aligned}
\end{equation}
where we used the notation $\lesssim$ to avoid writing out the constant $ c(p , \gamma , \delta , t)$ on the RHS.
Gronwall's Lemma with $f(t) := \E \left[ \sup_{[0,1]\times [0,t]} |w^h_Z(x,t) - w_Z(x,t)|^p \right] $ leads 
\begin{equation}
\begin{aligned}
\E \left[ \sup_{[0,1]\times [0,T]} |w^h_Z(x,t) - w_Z(x,t)|^p \right] 
&\leq   \;  c(p , \gamma , \delta , t) \left( h^{2p(1-\zeta)}  +  h^{\frac{p}{2} - \delta p } \right) \\
&+ \E \left[ \sup_{[0,1]\times [0,T]} |v(y,s) - v^h(y,s)|^p  \right]\\
 \end{aligned}
\end{equation}
Proposition \ref{prop:homogeneous} gives 
\begin{equation}\label{int:c4heatconv2}
\E \left[ \sup_{[0,T] \times [0,1]} | v^h(x,t) - v(x,t)|^p  \right]  \leq c(\gamma, t) h^{p(2 - 2\zeta)}
\end{equation}
which gives the final estimate
\begin{equation}
\begin{aligned}
\E \left[ \sup_{[0,1]\times [0,T]} |w^h_Z(x,t) - w_Z(x,t)|^p \right]^{1/p}
&\leq   \; c(p , \gamma , \delta , t) \left( h^{\frac{1}{2} - \delta } + h^{2-2\zeta} \right) \\
 \end{aligned}
\end{equation}
Taking the $p$-th root leads the desired result.
\end{proof}

From the above propositions, we can conclude convergence of truncated solutions in $L^p(\Omega, C([0,1] \times [0,T]))$ and almost surely.
\begin{proposition}\label{prop:truncatedconv}
  Let $u^h_0$ be the piecewise linear approximation of an initial data $u_0 \in C^{4}$. Let  $\utr$ be the solution to the system of SDEs with truncated drift \eqref{int:truncated} and $u_Z$ the solution to the continuous truncated equation \eqref{int:uz}.
Then, for all times $T>0$ and all $p>1$ there exists a constant $C(p, \gamma, T,Z)$ independently of $h$, such that for $\zeta < \frac{1}{2}$
\begin{equation}\label{int:truncatedconv}
 \E \left[ \sup_{[0,T] \times [0,1]} |\utr - u_Z|^p \right]^{1/p} \leq  C  h^{\frac{1}{2}-\delta}  
\end{equation}
\end{proposition}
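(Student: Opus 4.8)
The plan is to assemble Proposition~\ref{prop:truncatedconv} from the two convergence estimates already in hand, using the additive splitting $\utr = v^h + w^h_Z$ and $u_Z = v + w_Z$ introduced just above. Writing
\begin{equation*}
 \utr(x,t) - u_Z(x,t) = \big(v^h(x,t) - v(x,t)\big) + \big(w^h_Z(x,t) - w_Z(x,t)\big),
\end{equation*}
the triangle inequality for the norm on $L^p\big(\Omega, C([0,1]\times[0,T])\big)$ yields
\begin{equation*}
 \E\Big[\sup_{[0,T]\times[0,1]} |\utr - u_Z|^p\Big]^{1/p}
 \leq \E\Big[\sup_{[0,T]\times[0,1]} |v^h - v|^p\Big]^{1/p}
 + \E\Big[\sup_{[0,T]\times[0,1]} |w^h_Z - w_Z|^p\Big]^{1/p},
\end{equation*}
so it suffices to bound the two terms on the right separately.

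First I would dispose of the homogeneous term. Since $v^h$ and $v$ are deterministic, the outer expectation is vacuous, and Proposition~\ref{prop:homogeneous} together with the initial-data estimate \eqref{int:icconv} bounds it by $c(\gamma,T)\,h^{2-2\zeta}$ on all of $[0,T]\times[0,1]$ (this is exactly the content of \eqref{int:c4heatconv2}). For the second term I would simply invoke Proposition~\ref{prop:wtruncatedconv}, which gives $\E[\sup |w^h_Z - w_Z|^p]^{1/p} \leq C(p,\delta,\gamma,T,Z)\,h^{1/2-\delta}$ for some $\delta=\delta(p)>0$. Combining the two and comparing exponents—since $\zeta<\tfrac12$ we have $2-2\zeta>1>\tfrac12-\delta$ for $\delta$ small, hence $h^{2-2\zeta}\le h^{1/2-\delta}$ for $h$ small—shows that the stochastic/nonlinear part is the bottleneck and produces \eqref{int:truncatedconv} with $C=C(p,\delta,\gamma,T,Z)$. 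For the almost sure statement announced before the proposition I would then pass to a dyadic subsequence $h_n=2^{-n}$, apply Markov's inequality to the $L^p$ bound with $p$ chosen large enough that $p(\tfrac12-\delta)>1$, and invoke Borel–Cantelli to get $\sup_{[0,T]\times[0,1]}|u^{h_n}_{trunc}-u_Z|\to 0$ a.s.

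At this stage there is essentially no remaining obstacle: all the analytic effort sits in Propositions~\ref{prop:homogeneous}, \ref{prop:wtruncatedconv} (and, underneath them, the semigroup-convergence Proposition~\ref{prop:sgconv} and the a priori moment bounds). The only point deserving attention is rate bookkeeping—one must verify that the homogeneous error $h^{2-2\zeta}$ never dominates, which is precisely where the hypothesis $\zeta<\tfrac12$ re-enters, and observe that the Hölder loss $\delta$ furnished by Proposition~\ref{prop:wtruncatedconv} can be made arbitrarily small because $p$ is arbitrary, so that the exponent $\tfrac12-\delta$ in the statement is sharp up to this loss.
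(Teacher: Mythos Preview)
Your proof is correct and follows essentially the same approach as the paper: split via $\utr - u_Z = (v^h - v) + (w^h_Z - w_Z)$, invoke Propositions~\ref{prop:homogeneous} and~\ref{prop:wtruncatedconv}, and observe that $\min\{2-2\zeta,\tfrac12-\delta\}=\tfrac12-\delta$ when $\zeta<\tfrac12$. The Borel--Cantelli paragraph you append is extraneous to this proposition; it anticipates Proposition~\ref{prop:uniformtruncated}, where the paper argues slightly differently (defining $\XX_Z^h := h^{-\eta}\sup|\utr-u_Z|$ and showing $\sup_h\XX_Z^h<\infty$ a.s.\ without restricting to a dyadic subsequence).
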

\begin{proof}{Proposition}{prop:truncatedconv}
Proposition \ref{prop:homogeneous} and \ref{prop:wtruncatedconv} give for $T>0$ 
\begin{equation}
 \E \left[ \sup_{[0,T] \times [0,1]} |\utr - u_Z|^p \right]^{1/p} \leq  C(p, \gamma, T,Z) \left( h^{2(1-\zeta)}  +  h^{\theta} \right)
\end{equation}
As for $\zeta < \frac{1}{2},  \min \lbrace \frac{1}{2}-\delta , 2 - 2 \zeta \rbrace = \frac{1}{2}-\delta$, the result follows.
\end{proof}

\begin{proposition}\label{prop:uniformtruncated}
  Let $u^h_0$ be the piecewise linear approximation of an initial data $u_0 \in C^{4}$. Let  $\utr$ be the solution to the system of SDEs with truncated drift \eqref{int:truncated} and  $u_Z$ be the solution to the continuous truncated equation \eqref{int:uz}.
Then, for all times $T > 0$ and all $\frac{2}{\sqrt{3}} < Z < \infty$ there exists a random variable $\XX_Z$, which is almost surely finite, such that for $\zeta < \frac{1}{2}$ and $\eta < \frac{1}{2}-\delta $

\begin{equation}\label{int:uniformtruncated}
 \sup_{[0,T] \times [0,1]} |\utr - u_Z| \;  \leq \;  \XX_Z  h^{\eta}  
\end{equation}
In particular, $\utr$ converges uniformly to $u_Z$ almost surely.
\end{proposition}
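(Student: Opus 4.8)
\noindent The plan is to bootstrap the $L^p$-rate of Proposition~\ref{prop:truncatedconv} into an almost sure rate by a Borel--Cantelli argument along the countable family of admissible grid sizes $h = 1/N$, $N \in \nat$, conceding an arbitrarily small loss in the exponent. For $N \in \nat$, write $h = 1/N$ and abbreviate
\[
  G_N \; := \; \sup_{[0,T] \times [0,1]} \big| \utr - u_Z \big| .
\]
Both $u_Z$ and $\utr$ are almost surely continuous on the compact set $[0,T] \times [0,1]$: the former by Proposition~\ref{prop:continuoussolution}, the latter by the H\"older estimates underlying Lemma~\ref{lemma:discSG5b} and Lemma~\ref{lemma:discSG6} together with the global bound \eqref{int:VboundM} on the truncated drift. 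Hence each $G_N$ is a well-defined, almost surely finite random variable, and Proposition~\ref{prop:truncatedconv} provides, for every $p>1$, an exponent $\delta = \delta(p) > 0$ (which becomes arbitrarily small as $p$ grows) and a constant $C = C(p,\gamma,T,Z)$ with $\E[G_N^p] \le C^p N^{-(\frac12 - \delta)p}$.

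Fix $\eta$ with $0 < \eta < \tfrac12 - \delta$; equivalently, fix a target $\delta_0 \in (0, \tfrac12 - \eta)$. Choosing $p$ large we may arrange simultaneously $\delta(p) \le \delta_0$, so that $\tfrac12 - \delta(p) - \eta > 0$, and $(\tfrac12 - \delta(p) - \eta)\,p > 1$. Markov's inequality and the moment bound then give
\[
  \sum_{N \ge 1} \P\big( G_N > N^{-\eta} \big)
  \; \le \; \sum_{N \ge 1} N^{\eta p}\, \E[G_N^p]
  \; \le \; C^p \sum_{N \ge 1} N^{-(\frac12 - \delta - \eta)p} \; < \; \infty ,
\]
so by Borel--Cantelli, almost surely $G_N \le N^{-\eta}$ for all but finitely many $N$. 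Consequently $\XX_Z := \sup_{N \ge 1} N^{\eta}\, G_N$ is almost surely finite (its tail is $\le 1$ and only finitely many $G_N$ remain, each almost surely finite), and by construction $\sup_{[0,T]\times[0,1]} |\utr - u_Z| = G_N \le \XX_Z\, h^{\eta}$ for every $h = 1/N$, which is \eqref{int:uniformtruncated}. Since $h^{\eta} \to 0$, this also yields $\sup_{[0,T]\times[0,1]} |\utr - u_Z| \to 0$ almost surely, i.e.\ uniform almost sure convergence.

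The one point requiring care is the forced coupling between $p$ and $\delta$: in Proposition~\ref{prop:truncatedconv} the exponent $\delta = \delta(p)$ is not a free parameter but is produced by the Kolmogorov--Centsov step on the two-dimensional parameter set $[0,T]\times[0,1]$ and shrinks only as $p$ grows, so one must check that a \emph{single} $p$ can be taken large enough to make both $\tfrac12 - \delta(p) - \eta$ positive \emph{and} the series $\sum_N N^{-(\frac12 - \delta - \eta)p}$ summable; this is exactly what the hypothesis $\eta < \tfrac12 - \delta$, read as ``$\delta$ small, $p$ large'', encodes. The remaining technicalities --- measurability and almost sure finiteness of the suprema $G_N$ --- reduce to the joint continuity of the two random fields on $[0,T]\times[0,1]$, which also underlies Proposition~\ref{prop:truncatedconv} itself.
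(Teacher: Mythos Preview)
Your proof is correct and follows essentially the same route as the paper: set $\XX_Z^h := h^{-\eta}\sup_{[0,T]\times[0,1]}|\utr - u_Z|$, bound its $p$-th moment via Proposition~\ref{prop:truncatedconv}, and apply Markov plus Borel--Cantelli over the countable family $h=1/N$ to conclude that $\XX_Z := \sup_h \XX_Z^h$ is almost surely finite. If anything you are more explicit than the paper about the summability requirement $(\tfrac12-\delta-\eta)\,p>1$ needed for Borel--Cantelli and about the interplay between $p$ and $\delta(p)$, both of which the paper's argument leaves implicit.
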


\begin{proof}{Proposition}{prop:uniformtruncated} 
Define the random variable
\begin{equation}
  \XX_Z^h  :=  h^{-\eta} \sup_{[0,T] \times [0,1]} |\utr - u_Z|
\end{equation}
Note with the Markov inequality that
\begin{equation}
\P [(\XX_Z^h)^p \geq 1 ]  \leq \E [(\XX_Z^h)^p] 
\end{equation}
As with  $X :=  \sup_{[0,T] \times [0,1]} |\utr - u_Z| $ 
$ \E [(\XX_Z^h)^p] = \E [ |X \cdot h^{-\eta} |^p ] = h^{-\eta p} \E [|X|^p] $, we can use Proposition \ref{prop:truncatedconv} to show finiteness of moments of $\XX_Z^h$ as long as $\eta <  \frac{1}{2}-\delta$. 
For such $\eta$, Borel-Cantelli can be applied and gives
\begin{equation}x
\P \left[\left( \limsup_{h \to 0} \lbrace (\XX_Z^h)^p \geq 1 \rbrace \right)^c \right]  \leq \E [(\XX_Z^h)^p] \P \left[ \bigcup_{N\geq 1} \bigcap_{M \geq N} \lbrace (\XX_Z^h)^p \leq 1 \rbrace  \right]  = 1
\end{equation}
Therefore, there exists an $N=N(\omega)$ such that for all $M \geq N$, $ (\XX_Z^h)^p(\omega ) \leq 1 $, and consequently
\begin{equation}\label{int:suphxiZ}
\sup_h (\XX_Z^h)^p (\omega ) = \max_{1 \leq k \leq N(\omega )} (\XX_Z^h)^p   + \sup_{k \geq N(\omega)} (\XX_Z^h)^p \leq \max_{1 \leq k \leq N(\omega )} (\XX_Z^h)^p  + 1 < \infty
\end{equation}
Therefore $\XX_Z := \sup_h \XX_Z^h $ is a.s. finite.
\end{proof}

\subsection{From truncated to non-truncated solutions}

Recall the discrete stopping time  $\tau^h_Z$ introduced in \eqref{int:tauzh}
\[
 \tau^h_Z = \inf_t \{ \|\utr (t)\|_{\infty} >Z \} = \inf_t \{ \exists x : |\utr (x,t)| > Z \}
\]
we define furthermore
\[
 \tau_Z = \inf_t \{ \|u_Z (t)\|_{\infty} >Z \} = \inf_t \{ \exists x : |u_Z(x,t)| > Z \}
\]
and
\begin{equation}\label{int:defomegaz}
  \Omega_Z = \left\{ \tau_{Z - \delta} > T \textup{ and } \liminf_{h\to 0} \tau^h_Z > T \right\}
\end{equation}
$\Omega_Z$  can be decomposed as
\begin{equation}
  \Omega_Z = \bigcup_{h_0}  \Omega_{Z, h_0}
\end{equation}
where
\begin{equation}
  \Omega_{Z, h_0} = \bigcap_{h \leq h_0} \left\{ \tau_{Z - \delta} > T \textup{ and }  \tau^h_Z > T \right\}
\end{equation}
Note that $\Omega_Z \subset \Omega_{Z+1}$ and that the following lemma holds:

\begin{lemma}\label{lemma:goodsets}
For the sets $\Omega_Z$ defined in \eqref{int:defomegaz} holds, under the conditions of Proposition \ref{prop:uniformtruncated},
\[
 \P[\Omega_Z] \longrightarrow 1 \textup{ for } Z \to \infty
\]
\end{lemma}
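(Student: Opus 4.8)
The plan is to show that $\P[\Omega_Z] \to 1$ by controlling the two events whose intersection defines $\Omega_Z$, namely $\{\tau_{Z-\delta} > T\}$ for the continuous truncated solution and $\{\liminf_{h\to 0}\tau^h_Z > T\}$ for the discrete truncated solutions. For the first event, the key observation is that the continuous mild solution $u$ of \eqref{int:generalac} coincides with $u_{Z-\delta}$ up to the stopping time $\tau_{Z-\delta}$, and by Proposition \ref{prop:continuoussolution} the solution $u$ has almost surely finite sup-norm on $[0,T]\times[0,1]$; hence $\P[\tau_{Z-\delta} > T] = \P[\sup_{[0,T]\times[0,1]}|u| < Z-\delta] \to 1$ as $Z\to\infty$ by monotone convergence (the events increase to a set of full measure). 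So the only real work is the discrete part.

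First I would fix a large $Z$ and work on the event $A_Z := \{\sup_{[0,T]\times[0,1]}|u| < Z - 2\delta\}$, which has probability tending to $1$ as $Z\to\infty$ by the argument above. On $A_Z$ one has $\tau_{Z-\delta} > T$ automatically (with room to spare), so it remains to show that on $A_Z$ one has $\liminf_{h\to 0}\tau^h_Z > T$ with high probability. The idea is to compare $\utr$ with $u_Z$: by Proposition \ref{prop:uniformtruncated}, $\sup_{[0,T]\times[0,1]}|\utr - u_Z| \leq \XX_Z h^\eta$ with $\XX_Z$ almost surely finite. On $A_Z$ we also want to compare $u_Z$ with $u$; since $u = u_{Z-\delta}$ up to $\tau_{Z-\delta}$ and (on $A_Z$) $u$ never reaches level $Z-\delta$, the truncation at level $Z$ is never felt either, so $u_Z \equiv u$ on $[0,T]\times[0,1]$ on the event $A_Z$ — here a short argument via the comparison principle (Proposition \ref{prop:comparison}) or uniqueness of mild solutions is needed, using that $V'$ and $\vtr$ agree on $[-(Z-\delta), Z-\delta]$. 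Therefore on $A_Z$, for $h$ small enough that $\XX_Z h^\eta < \delta$, we get $\sup_{[0,T]\times[0,1]}|\utr| \leq \sup_{[0,T]\times[0,1]}|u| + \delta < Z - \delta < Z$, which forces $\tau^h_Z > T$. Taking $h_0$ small and noting $\XX_Z h^\eta \to 0$ a.s., we obtain that $\liminf_{h\to 0}\tau^h_Z > T$ on $A_Z$ up to a null set, so $A_Z \subset \Omega_Z$ modulo a null set, hence $\P[\Omega_Z] \geq \P[A_Z] \to 1$.

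More carefully, since $\XX_Z$ depends on $Z$, I would organize the limit as follows: for each fixed $Z$, $\{\XX_Z < \infty\}$ has probability one, so $A_Z \cap \{\XX_Z < \infty\} \subset \bigcup_{h_0}\Omega_{Z,h_0} = \Omega_Z$ up to a null set, giving $\P[\Omega_Z] \geq \P[A_Z]$; then let $Z\to\infty$. The monotonicity $\Omega_Z \subset \Omega_{Z+1}$ noted before the lemma is what guarantees the limit exists and equals $\sup_Z \P[\Omega_Z] = 1$.

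The main obstacle I anticipate is the bookkeeping around the three levels $Z-2\delta$, $Z-\delta$, $Z$ and making precise the claim that on the event $A_Z$ the continuous solutions $u$, $u_{Z-\delta}$ and $u_Z$ all coincide on $[0,T]\times[0,1]$ — this requires invoking a localization/uniqueness argument for the truncated equations (that two truncated drifts which agree on the range actually visited produce the same solution), which is intuitively clear but needs the comparison principle of Proposition \ref{prop:comparison} applied in both directions, or equivalently a pathwise uniqueness statement up to the relevant stopping time. Everything else is a soft combination of Proposition \ref{prop:uniformtruncated}, the a.s. finiteness of $\sup|u|$ from Proposition \ref{prop:continuoussolution}, and Borel--Cantelli-type reasoning already carried out in the proof of Proposition \ref{prop:uniformtruncated}.
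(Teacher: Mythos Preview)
Your proposal is correct and follows essentially the same route as the paper's proof. Both arguments rest on the same two ingredients: the almost sure boundedness of the continuous solution on $[0,T]\times[0,1]$ (from Proposition~\ref{prop:continuoussolution}) to control $\tau_{Z-\delta}$, and the uniform convergence estimate $\sup|\utr - u_Z|\leq \XX_Z h^\eta$ from Proposition~\ref{prop:uniformtruncated} to force $\tau^h_Z > T$ for small $h$ once the continuous solution is known to stay below $Z-\delta$.

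The only organizational differences are cosmetic. The paper argues on the complement $\Omega_Z^c$, introduces an auxiliary cutoff $M$ for $\XX_Z$ and bounds $\P[\XX_Z>M]$ separately before sending $M\to\infty$, whereas you intersect directly with the full-measure set $\{\XX_Z<\infty\}$; and the paper bounds $\P[\tau_{Z-\delta}\leq T]$ via Markov's inequality and the moment bound \eqref{int:continousmoments}, whereas you invoke monotone convergence of the events $A_Z$ to a set of full measure. You also make explicit (and correctly flag as the one delicate point) the localization step identifying $u$, $u_{Z-\delta}$ and $u_Z$ on the event $A_Z$ via pathwise uniqueness, which the paper leaves implicit in the line $\{\tau_{Z-\delta}<T\}\subset\{\sup|u|>Z-\delta\}$. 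Your extra buffer level $Z-2\delta$ is harmless and makes this identification cleaner.
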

\begin{proof}{Lemma}{lemma:goodsets}
We prove the statement on the complement
\begin{equation}\label{int:badsetzero}
\P[\Omega_Z^c] \longrightarrow 0 \textup{ for } Z \to \infty
\end{equation}
\underline{Step 1}:As a first step, we show that 
\begin{equation}\label{int:liminftauh}
 \liminf_{h\to 0} \tau_Z^h \geq T
\end{equation}
Recall that under the conditions of Proposition \ref{prop:uniformtruncated}, 
$  \sup_{[0,T] \times [0,1]} |\utr - u_Z| \;  \leq \;  \XX_Z^h  h^{\eta} $
and so for all $h \leq h(M) = \left(\frac{M}{\delta}\right)^{\eta}$ we get $  \sup_{[0,T] \times [0,1]} |\utr - u_Z| \;  \leq \;  \delta $ and therefore
\begin{equation}
   \sup_{[0,T] \times [0,1]} |\utr | =   \sup_{[0,T] \times [0,1]} | \utr - u_Z + u_Z | \leq \sup_{[0,T] \times [0,1]} |u_Z| +  \delta  
\end{equation}  
which gives the equality of the events
\begin{equation}
\lbrace \tau_{Z - \delta} \geq T \rbrace \; = \;  \lbrace \sup_{[0,T] \times [0,1]} |u_Z|  < Z - \delta \rbrace \; = \;  \lbrace  \sup_{[0,T] \times [0,1]} |\utr | < Z \rbrace
\end{equation}
We conclude that for any $\omega \in \lbrace \tau_{Z - \delta} \geq T \rbrace$ we have  $\sup_{[0,T] \times [0,1]} |\utr | < Z$, which is nothing else than 
\begin{equation}
 \tau_Z^h > T \qquad \textup{for all } h \leq h(M) = \left(\frac{M}{\delta}\right)^{\eta}
\end{equation}
which proves \eqref{int:liminftauh}.

\noindent \underline{Step 2: Proof of equation \eqref{int:badsetzero}}
 We conclude from \eqref{int:liminftauh} that for all $M>0$
\begin{equation}\label{int:liminfsets}
 \lbrace \liminf_{h \to 0} \tau_{Z - \delta}^h < T \rbrace  \cap  \lbrace  \XX_Z < M \rbrace   \subset \lbrace \tau_{Z - \delta} < T \rbrace  \cap  \lbrace \XX_Z < M \rbrace 
\end{equation}
We have the following decomposition of $\P[\Omega_Z^c] $ for $0 < \delta < 1$ fixed, for all $Z > \frac{2}{\sqrt{3}}$ and  $M >0$:
\begin{equation}\label{int:pomegazcomp}
\P[\Omega_Z^c] = \P[\tau_{Z-\delta} \leq T] + \P \left[\liminf_{h\to 0} \tau_Z^h < T; \XX_Z < M \right] + \P [ \XX_Z > M]
\end{equation}

We estimate the terms arising in \eqref{int:pomegazcomp}:
As $\XX_Z$ is almost surely finite by \eqref{int:suphxiZ} from the last lemma, $ \P [ \XX_Z > M] \to 0$ for  $M \to \infty$, so the last term in \eqref{int:pomegazcomp} vanishes in the large $M$-limit. Moreover, by definition of the stopping time, 
\begin{equation}
 \lbrace \tau_{Z - \delta} < T \rbrace \subset \lbrace \sup_{ [0,T] \times [0,1]} |u|  > Z - \delta \rbrace 
\end{equation}
and by Markov
\begin{equation}
\P \left[ \lbrace \sup_{ [0,T] \times [0,1]} |u|  > Z - \delta \rbrace \right] \leq \frac{1}{(Z - \delta)^p} \E \left[ \sup_{ [0,T] \times [0,1]} |u|^p \right] \overset{\eqref{int:continousmoments}}{\leq}   \frac{C(T,p)}{(Z - \delta)^p}
\end{equation}
as $Z \to \infty$ as $\E \left[ \sup_{ [0,T] \times [0,1]} |u|^p \right] < C(T,p) $ due to Proposition \ref{prop:continuoussolution}. 
With \eqref{int:liminfsets}  we can bound for all $M>0$
\begin{equation}
 \P \left[\liminf_{h\to 0} \tau_Z^h < T; \XX_Z < M \right] \leq  \P \left[\tau_{Z-\delta} < T; \XX_Z < M \right] \leq \P[\tau_{Z-\delta} < T] 
\end{equation}
which brings us to the same case as before. We summarize 
\begin{equation}\label{int:pomegazcomp2}
\begin{aligned}
\P[\Omega_Z^c] &= \P[\tau_{Z-\delta} \leq T] + \P \left[\liminf_{h\to 0} \tau_Z^h < T; \XX_Z < M \right] + \P [ \XX_Z > M] \\
&\leq 2  \frac{C(T,p)}{(Z - \delta)^p} + \P [ \XX_Z > M]
\end{aligned}
\end{equation}
and conclude \eqref{int:badsetzero} by first taking the limit in $M$ and then in $Z$.
\end{proof}

In fact, \eqref{int:badsetzero} can be quantifies as such: For any given $\epsilon >0$  we can choose $Z$ such that 
\begin{equation}
\P[\Omega_Z^c] \leq \epsilon
\end{equation}
As the sets $\Omega_{Z, h_0}$ are ordered and increase when $h_0$ is decreases, 
\begin{equation}\label{int:OmegaZ}
\P[\Omega_Z^c] = \P \left[ \bigcap_{h_0} \Omega^c_{Z,h_0}  \right] = \lim_{h_0 \to 0} \P \left[ \Omega^c_{Z,h_0} \right] \leq \epsilon
\end{equation}

We have already proved almost sure convergence for truncated solutions in \eqref{int:uniformtruncated}, which gave a r.v. $\XX_Z$. Now have to remove the truncation.

\begin{theorem}\label{theo:asconvergence}
  Let $u^h_0$ be the piecewise linear approximation of an initial data $u_0 \in C^{4}$. Let  $u^h(x,t)$ be the solution to the system of SDEs \eqref{int:discretesystem} and $u(x,t)$ the solution to \eqref{int:generalac}.
  
Then, for all times $T > 0$, $\zeta < \frac{1}{2}$ and $\eta < \frac{1}{2}-\delta $ there exists a random variable $\XX$, which is almost surely finite,
\begin{equation}\label{int:asconvergence}
 \sup_{[0,T] \times [0,1]} |u^h(x,t) - u(x,t) | \;  \leq \;  \XX  h^{\eta}  
\end{equation}
In particular, we have almost surely uniform convergence of $u^h$ to $u$.
\end{theorem}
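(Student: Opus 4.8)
The plan is to deduce the non-truncated convergence \eqref{int:asconvergence} from the truncated statement \eqref{int:uniformtruncated} of Proposition \ref{prop:uniformtruncated} by localizing on the good sets $\Omega_Z$ and exploiting the fact that, before the stopping time $\tau^h_Z$, the solution $u^h$ of \eqref{int:discretesystem} coincides with the truncated solution $\utr$ (and likewise $u$ coincides with $u_Z$ before $\tau_Z$), as noted in the Remark following \eqref{int:tauzh}. Since by Lemma \ref{lemma:goodsets} we have $\P[\Omega_Z] \to 1$ as $Z \to \infty$ and the sets are nested ($\Omega_Z \subset \Omega_{Z+1}$), the event $\Omega_\infty := \bigcup_Z \Omega_Z$ has full probability, so it suffices to construct, for almost every $\omega$, a finite random variable $\XX(\omega)$ realizing the bound.

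First I would fix $\omega \in \Omega_\infty$ and choose $Z = Z(\omega)$ with $\omega \in \Omega_Z$; by definition of $\Omega_Z$ we then have $\tau_{Z-\delta}(\omega) > T$ and $\liminf_{h\to 0}\tau^h_Z(\omega) > T$, so there is an $h_0 = h_0(\omega)$ with $\tau^h_Z(\omega) > T$ for all $h \le h_0$. For such $h$, the Remark after \eqref{int:tauzh} gives $u^h(x,t,\omega) = \utr(x,t,\omega)$ for all $(x,t)\in[0,1]\times[0,T]$; similarly, since $\tau_{Z-\delta}(\omega)>T$ and $u_Z$ coincides with $u_{Z-\delta}$ (hence with $u$) up to that time, $u(x,t,\omega) = u_Z(x,t,\omega)$ on $[0,1]\times[0,T]$. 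Consequently, for all $h \le h_0(\omega)$,
\begin{equation}
\sup_{[0,T]\times[0,1]} |u^h(x,t,\omega) - u(x,t,\omega)| = \sup_{[0,T]\times[0,1]} |\utr(x,t,\omega) - u_Z(x,t,\omega)| \le \XX_Z(\omega)\, h^{\eta},
\end{equation}
where the last inequality is \eqref{int:uniformtruncated} and $\XX_Z$ is a.s. finite by Proposition \ref{prop:uniformtruncated}.

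It then remains to absorb the finitely many grid sizes $h > h_0(\omega)$ into a single random constant. For those $h$ one simply uses the crude deterministic bound
\begin{equation}
h^{-\eta}\sup_{[0,T]\times[0,1]}|u^h(x,t,\omega) - u(x,t,\omega)| \le h_0(\omega)^{-\eta}\Big(\sup_{[0,T]\times[0,1]}|u^h| + \sup_{[0,T]\times[0,1]}|u|\Big),
\end{equation}
which is finite for each such $h$ by Proposition \ref{prop:moments} (applied pathwise, noting the supremum is a.s. finite) and Proposition \ref{prop:continuoussolution}; taking the maximum over the finite set $\{h > h_0(\omega)\}$ together with $\XX_Z(\omega)$ yields a finite $\XX(\omega)$ for which \eqref{int:asconvergence} holds for all $h$. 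Measurability of $\XX$ follows from that of the ingredients, and $\XX < \infty$ a.s. because $\P[\Omega_\infty]=1$. The main obstacle is a bookkeeping one rather than an analytic one: matching up the various stopping times carefully (in particular tracking the $\delta$-shift between $\tau_{Z-\delta}$ and $\tau_Z$ and checking $u_Z = u$ holds up to time $T$ on $\Omega_Z$, which is exactly what the definition \eqref{int:defomegaz} of $\Omega_Z$ was engineered to guarantee), and making sure the random variable $\XX$ is genuinely independent of $h$ while still being a.s. finite — this is where Lemma \ref{lemma:goodsets} and the nestedness $\Omega_Z \subset \Omega_{Z+1}$ do the essential work.
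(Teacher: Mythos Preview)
Your proposal is correct and follows essentially the same strategy as the paper: localize on the nested sets $\Omega_Z$ (whose union has full measure by Lemma \ref{lemma:goodsets}), observe that on $\Omega_Z$ there is a random $h_0(\omega)$ below which $u^h=\utr$ and $u=u_Z$ on $[0,1]\times[0,T]$, invoke Proposition \ref{prop:uniformtruncated}, and then patch the finitely many remaining scales $h>h_0(\omega)$ into the constant. Your treatment of this last ``finitely many large $h$'' step is in fact more explicit than the paper's, which simply passes from $\XX_Z$ to an unspecified $\widetilde{\XX_Z}$ and defines $\XX$ piecewise on $\Omega_{Z,\XX_Z}\setminus\Omega_{Z-1,\XX_{Z-1}}$; the underlying idea is the same.
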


\begin{proof}{Theorem}{theo:asconvergence}
Recall from above the definition
\[
   \Omega_Z = \bigcup_{h_0} \Omega_{Z, h_0} = \bigcup_{h_0} \left( \bigcap_{h \leq h_0} \left\{ \tau_{Z - \delta} > T \textup{ and }  \tau^h_Z > T \right\} \right)
\]
We know that the sets $\Omega_Z$ defined in \eqref{int:defomegaz} are increasing in the sense of $\Omega_Z \subset \Omega_{Z+1}$ and $ \P[\Omega_Z] \longrightarrow 1 $ as $Z \to \infty$ by Lemma \ref{lemma:goodsets}. We conclude that 
\begin{equation}
 \P \left[ \bigcup_{Z\geq 1} \Omega_Z \right] = 1
\end{equation}
Define now
\begin{equation}
\Omega_{Z, \XX_Z} := \Omega_Z \cap \lbrace \XX_Z < \infty \rbrace
\end{equation}
As $\XX_Z := \sup_h \XX_Z^h$ is a.s. finite, the union of $\Omega_{Z, \XX_Z}$ has again full measure:
\begin{equation}\label{int:goodset1}
 \P \left[ \bigcup_{Z\geq 1} \Omega_{Z, \XX_Z}\right] = 1
\end{equation}
On $\Omega_{Z, \XX_Z}$ there exists $h_0(\omega)$ such that for all $h \leq h_0(\omega)$ we are in the good regime $\tau_Z^h >T$ and $\tau_{Z-\delta} >T$.
In this regime, we have according to Proposition \ref{prop:uniformtruncated}
\begin{equation}
  \sup_{[0,T] \times [0,1]} |u^h(x,t) - u(x,t) | \;  = \; \sup_{[0,T] \times [0,1]} |\utr - u_Z| \;  \leq \;  \XX_Z  h^{\eta} 
\end{equation}
as $\XX_Z := \sup_h \XX_Z^h$ is finite on $\Omega_{Z, \XX_Z}$.
Therefore, for sufficiently small $h$ there exists a finite random variable $ \widetilde{\XX_Z}$ such that 
\begin{equation}\label{int:XztoX}
  \sup_{[0,T] \times [0,1]} |u^h(x,t) - u(x,t) |   \leq \;  \widetilde{\XX_Z} h^{\eta} 
\end{equation}
By choosing 
\begin{equation}
 \XX := \widetilde{\XX_Z} \textup{ on }  \Omega_{Z, \XX_Z} \setminus \Omega_{Z-1, \XX_{Z-1}}
\end{equation}
we nest the inequalities \eqref{int:XztoX} to construct an almost surely finite random variable $\XX $ on $\bigcup_{Z>1} \Omega_{Z, \XX_Z}$. As this union has full measure due to \eqref{int:goodset1}, we conclude
\begin{equation}
  \sup_{[0,T] \times [0,1]} |u^h(x,t) - u(x,t) |   \leq \;  \XX h^{\eta} 
\end{equation}
which ends the proof.
\end{proof}

\begin{theorem}\label{theo:lpconvergence}
Let $u^h_0$ be the piecewise linear approximation of an initial data $u_0 \in C^{4}$. Let  $u^h(x,t)$ be the solution to the system of SDEs \eqref{int:discretesystem}  and $u(x,t)$ the solution to \eqref{int:generalac}.
  
Then, for all times $T>0$, $\delta >0$, $p>1$ and  $\zeta < \frac{1}{2}$
   \[
   \E \left[ \sup_{[0,T] \times [0,1]} |u^h - u|^p\right]^{1/p}  \leq c h^{\frac{1}{2} - \delta}  
  \]
\end{theorem}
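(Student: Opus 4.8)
The strategy is to upgrade the truncated $L^p$-estimate of Proposition \ref{prop:truncatedconv} to the non-truncated one, in the spirit of the almost sure statement in Theorem \ref{theo:asconvergence} but tracking expectations directly. Fix $T>0$, $p>1$, $\delta>0$ and $\zeta<\tfrac12$, and pick an auxiliary exponent $\delta_0\in(0,\delta)$ for which Proposition \ref{prop:truncatedconv} applies. For a truncation level $Z$, to be chosen below as a function of $h$, set $G_{Z,h}=\{\tau^h_Z>T\}\cap\{\tau_{Z-\delta}>T\}$ and split
\[
\E\Big[\sup_{[0,T]\times[0,1]}|u^h-u|^p\Big]
=\E\Big[\sup|u^h-u|^p\,\indicator_{G_{Z,h}}\Big]
+\E\Big[\sup|u^h-u|^p\,\indicator_{G_{Z,h}^c}\Big].
\]

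On $G_{Z,h}$ the relevant stopping times exceed $T$, so by the remark following \eqref{int:tauzh} and its continuous counterpart $u^h\equiv\utr$ and $u\equiv u_Z$ on all of $[0,T]\times[0,1]$; hence the first term is at most $\E[\sup|\utr-u_Z|^p]\le C(p,\gamma,T,Z)^p\,h^{(1/2-\delta_0)p}$ by Proposition \ref{prop:truncatedconv}.

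For the second term I would apply H\"older's inequality with a large exponent $r$, bounding it by $\E[\sup|u^h-u|^{pr}]^{1/r}\,\P[G_{Z,h}^c]^{1/r'}$, where $\tfrac1r+\tfrac1{r'}=1$. The moment factor is bounded uniformly in $h$: by the triangle inequality it is controlled by $\sup_h\E[\sup|u^h|^{pr}]$ and $\E[\sup|u|^{pr}]$, both finite by Proposition \ref{prop:moments} and Proposition \ref{prop:continuoussolution}, which hold in every $L^q$. For the probability factor, $\P[G_{Z,h}^c]\le\P[\sup|u^h|>Z]+\P[\sup|u|>Z-\delta]$, and by Markov's inequality together with Proposition \ref{prop:moments} and Proposition \ref{prop:continuoussolution} this is at most $C(q)(Z-\delta)^{-q}$ for every $q$, uniformly in $h$.

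It then remains to choose $Z=Z(h)\to\infty$ as $h\to0$ so that both contributions are $\lesssim h^{(1/2-\delta)p}$. One wants $Z(h)$ to grow fast enough that $\P[G_{Z(h),h}^c]^{1/r'}\lesssim h^{(1/2-\delta)p}$ — a small negative power of $h$ for $Z(h)$ suffices, precisely because the a priori bounds of Proposition \ref{prop:moments} are available in every $L^q$ uniformly in $h$, so $q$ can be taken as large as needed — and at the same time slowly enough that $C(p,\gamma,T,Z(h))\lesssim h^{-(\delta-\delta_0)}$, keeping the good-set term at rate $h^{(1/2-\delta)p}$. This joint choice is the main obstacle: the constant in Proposition \ref{prop:truncatedconv} worsens as the truncation level $Z$ (equivalently the Lipschitz constant of $\vtr$, which enters the Gronwall step) grows, while the bad-set probability only improves with $Z$, so $Z(h)$ has to be placed inside the resulting window, whose non-emptiness relies on having uniform moment bounds of every order. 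Taking $p$-th roots then gives $\E[\sup_{[0,T]\times[0,1]}|u^h-u|^p]^{1/p}\le c\,h^{1/2-\delta}$, as claimed.
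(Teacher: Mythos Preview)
Your good-set/bad-set decomposition via the truncation stopping times is precisely the skeleton of the paper's proof. The paper, however, keeps the truncation level $Z$ \emph{fixed}: it first chooses $Z$ so that $\P[\Omega_{Z,h_0}^c]$ is below a given $\epsilon$, bounds the bad-set contribution by $C\sqrt{\epsilon}$ via Cauchy--Schwarz together with Propositions~\ref{prop:moments} and~\ref{prop:continuoussolution}, bounds the good-set contribution by $C(Z)\,h^{p(1/2-\delta)}$ via Proposition~\ref{prop:truncatedconv}, and only then lets $h\to0$ and finally $\epsilon\to0$. Your refinement is to couple $Z=Z(h)$ so as to extract the rate directly.

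The genuine gap is your claim that the window for $Z(h)$ is non-empty. The constant $C(p,\gamma,T,Z)$ in Proposition~\ref{prop:truncatedconv} comes out of a Gronwall step whose coefficient carries $\mathrm{lip}(\vtr)^p$; since $V'(u)=u^3-u$ one has $\mathrm{lip}(\vtr)\sim Z^2$, and the resulting Gronwall factor is of order $\exp(cZ^{2p}T)$. If you take $Z(h)=h^{-\beta}$ for any $\beta>0$ (your ``small negative power of $h$''), this constant blows up faster than every negative power of $h$, so $C(Z(h))\,h^{(1/2-\delta_0)p}$ cannot be $\lesssim h^{(1/2-\delta)p}$. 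The only growth of $Z(h)$ that keeps $C(Z(h))$ polynomial in $1/h$ is logarithmic, say $Z(h)\sim(\log(1/h))^{1/(2p)}$; but then the Markov bound $\P[G_{Z(h),h}^c]\lesssim Z(h)^{-q}$ decays only like a negative power of $\log(1/h)$, never like a power of $h$. The uniform-in-$h$ moment bounds of every order help only on the bad-set side and do nothing to tame the exponential Gronwall constant on the good set, so the balancing you sketch cannot be carried out with the stated ingredients.
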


\begin{proof}{Theorem}{theo:lpconvergence}
We split the expectation into two parts
\begin{equation}\label{int:step1}
\begin{aligned}
  \E \left[  \sup_{[0,T] \times [0,1]} |u^h - u|^p \right] &=   \E \left[\indicator_{ \Omega_{Z, h_0}}  \sup_{[0,T] \times [0,1]}  |u^h - u| ^p +\indicator_{ \Omega^c_{Z, h_0}}  \sup_{[0,T] \times [0,1]} |u^h - u|^p\right]\\
 \end{aligned}
\end{equation}
Note that by definition of the set $ \Omega_{Z, h_0}$, up to time $T$ the absolute value has not reached the value $Z$ yet, and as the $ \sup_{[0,T] \times [0,1]}$-norm goes only up to time $T$,  we can replace $u$ by the truncated solution $u_Z$  on the set $ \Omega_{Z, h_0}$.
Consequently, we can write for all $h < h_0$ 
\begin{equation}\label{int:step2}
\begin{aligned}
 \eqref{int:step1} &=   \E \left[\indicator_{ \Omega_{Z, h_0}} \sup_{[0,T] \times [0,1]} |\utr - u_Z|^p \right]+ \E \left[ \indicator_{ \Omega^c_{Z, h_0}}  \sup_{[0,T] \times [0,1]} |u^h - u|^p\right]\\
 \end{aligned}
\end{equation}
As by virtue of  Proposition \ref{prop:moments} and  Proposition \ref{prop:continuoussolution} 
\begin{equation}
\begin{aligned}
 \sup_h   \E \left[  \sup_{[0,T] \times [0,1]} |u^h - u|^{2p}\right]  &\leq  \sup_h \E \left[ \sup_{[0,T] \times [0,1]} |u^h(x,t)|^{2p}\right] + \E \left[ \sup_{[0,T] \times [0,1]} |u(x,t)|^{2p}\right] \\
  &\leq C (\gamma,p,T) \\
  \end{aligned}
\end{equation}
the second term in \eqref{int:step2} can be reformulated with Cauchy-Schwarz  
\begin{equation}
\begin{aligned}
 \E \left[ \indicator_{ \Omega^c_{Z, h_0}} \sup_{[0,T] \times [0,1]} |u^h - u|^p \right] \; &\leq \; \P\left[\Omega^{c}_{Z, h_0}\right]^{1/2} 
 \E \left[  \sup_{[0,T] \times [0,1]} |u^h - u|^{2p}\right]^{1/2}\\
  &\leq \; c (\gamma,p,T) \cdot \P\left[\Omega^{c}_{Z, h_0}\right]^{1/2} 
 \end{aligned}
\end{equation}
Employing Proposition \ref{prop:truncatedconv}, equation \eqref{int:step2} reads
 \begin{equation}\label{int:step3}
\begin{aligned}
  \E \left[ \sup_{[0,T] \times [0,1]} |u^h - u|^p \right] 
  &\overset{\eqref{int:truncatedconv}}{\leq}  c (\gamma,p,T)   h^{\frac{p}{2} - \delta p }   + C (p,T) \cdot \P\left[\Omega^{c}_{Z, h_0}\right]^{1/2} \\
 \end{aligned}
\end{equation}
\eqref{int:OmegaZ} implies that we can choose some $h_0$ such that $\P \left[ \Omega^c_{Z,h_0} \right] \leq 2 \epsilon$, and so 
 \begin{equation}\label{int:step4}
\begin{aligned}
  \E \left[ \sup_{[0,T] \times [0,1]} |u^h - u|^p \right]  \; \leq \;  c (\gamma,p,T)  h^{\frac{p}{2} - \delta p}   + C (p,T) \sqrt{ \epsilon} \\
 \end{aligned}
\end{equation}
But $ \epsilon$ was arbitrary, so we take the $p$-th root and conclude 
 \begin{equation}\label{int:step5}
\begin{aligned}
 \limsup_{h \to 0} \E \left[  \sup_{[0,T] \times [0,1]} |u^h - u|^p \right]  \;  \leq \; c (\gamma,p,T)  h^{\frac{1}{2} - \delta}   \\
 \end{aligned}
\end{equation}
which converges to zero if  $2-2\zeta > 0$. However, the usage of Proposition \ref{prop:sgconv} restricts us to $\zeta < \frac{1}{2}$.
\end{proof}

\section{Convergence of transition times}
Solutions to equation \eqref{int:generalac} have a metastable behaviour: After spending a long time close to one minimum of the double well potential $V$, they switch quickly to the other minimum. The time which the solution needs to make this transition is called the \textit{transition time}. 

This section adds the observation that the transition times  $ \tau^h (B)$ for the discrete system \eqref{int:discretesystem} converge to the transition times for the limit equation \eqref{int:generalac}, of which precise estimates have been proved in \cite{barret} via a nearest-neighbour approximation scheme.   

\bigskip

Given the initial condition $u_0$ close to one minimum of the potential $V$. We fix a function $u_{\textup{min}} \in C([0,1])$ close to the other minimum and take a ball of radius $\rho$ around $u_{\textup{min}}$. We want to estimate the time that the trajectory of our solution $u(t)$ enters this neighbourhood of $u_{\textup{min}}$ for the first time. 
As a measure of distance from $u_{\textup{min}}$, we take the open ball in $L^q([0,1])$.

We define 
\begin{equation}
 \tau (\rho , q) := \inf \left\lbrace t > 0, \|u(t) - u_{\textup{min}}\|_{L^q([0,1])} < \rho \right\rbrace
\end{equation}
\begin{equation}
 \tau^h (\rho , q) := \inf \left\lbrace t > 0, \|u(t)^h - u_{\textup{min}}^h\|_{L^q([0,1])} < \rho \right\rbrace
\end{equation}

\begin{lemma}\label{lemma:tau}
Under the conditions of  Theorem \ref{theo:asconvergence}, for all $\rho >0$, 
 \begin{equation}
 \tau(\rho, q) \; \leq \; \liminf_{h\to 0}  \tau^h(\rho , q) \; \leq \; \limsup_{h\to 0}  \tau^h(\rho , q) \; \leq \;   \tau(\rho^- , q) \qquad{ a. s.}
 \end{equation}
where $ \tau(\rho^- , q) = \lim_{\delta \to 0^+} \tau(\rho - \delta , q)$
\end{lemma}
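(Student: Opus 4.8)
The plan is to sandwich $\liminf_h \tau^h$ and $\limsup_h \tau^h$ between $\tau(\rho,q)$ and $\tau(\rho^-,q)$ by using the uniform almost-sure convergence $u^h \to u$ from Theorem \ref{theo:asconvergence}, which in particular gives $\|u^h(t)-u(t)\|_{L^q([0,1])} \to 0$ uniformly on $[0,T]$ (here one also needs $u^h_{\textup{min}} \to u_{\textup{min}}$ in $L^q$, which holds since $u^h_{\textup{min}}$ is the piecewise linear interpolant of $u_{\textup{min}}$). Fix a realization $\omega$ in the full-measure event on which this convergence holds.

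For the upper bound $\limsup_h \tau^h(\rho,q) \le \tau(\rho^-,q)$: fix $\delta>0$ and set $t_\delta := \tau(\rho-\delta,q)$. By definition of the infimum, for every $\epsilon>0$ there is a time $t \in (t_\delta, t_\delta+\epsilon)$ with $\|u(t)-u_{\textup{min}}\|_{L^q} < \rho-\delta$ (using that the process $t\mapsto \|u(t)-u_{\textup{min}}\|_{L^q}$ is continuous, so the strict inequality is attained on an open set of times accumulating at $t_\delta$). Then for $h$ small enough, $\|u^h(t)-u^h_{\textup{min}}\|_{L^q} \le \|u(t)-u_{\textup{min}}\|_{L^q} + \|u^h(t)-u(t)\|_{L^q} + \|u^h_{\textup{min}}-u_{\textup{min}}\|_{L^q} < \rho$, hence $\tau^h(\rho,q) \le t < t_\delta + \epsilon$. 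Letting $h\to0$, then $\epsilon\to0$, gives $\limsup_h \tau^h(\rho,q) \le t_\delta = \tau(\rho-\delta,q)$, and letting $\delta\to 0^+$ yields the claim.

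For the lower bound $\tau(\rho,q) \le \liminf_h \tau^h(\rho,q)$: suppose for contradiction that along a subsequence $\tau^{h_n}(\rho,q) \to L < \tau(\rho,q)$. Pick $L' \in (L, \tau(\rho,q))$. For large $n$ we have $\tau^{h_n}(\rho,q) < L'$, so there is $t_n \le L'$ (in fact $t_n$ can be taken $\le L'$, and by passing to a further subsequence $t_n \to t_* \le L' < \tau(\rho,q)$) with $\|u^{h_n}(t_n)-u^{h_n}_{\textup{min}}\|_{L^q} \le \rho$ — here one uses that the discrete trajectory's $L^q$-distance is continuous in $t$, so the first hitting time of the closed condition is attained or approached; a small care is needed since $\tau^h$ is defined with a strict inequality $<\rho$, but one argues that $\|u^{h_n}(t_n)-u^{h_n}_{\textup{min}}\|_{L^q} \le \rho$ holds in the limit $t\uparrow t_n$. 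Passing to the limit using uniform convergence and continuity of $u$ in time gives $\|u(t_*)-u_{\textup{min}}\|_{L^q} \le \rho$. Since $t_* < \tau(\rho,q)$ and the ball is open, this is a near-contradiction; to make it clean one replaces $\rho$ by $\rho-\delta$ in the intermediate step (choosing $t_n$ with distance $\le \rho-\delta$ is too strong, so instead one notes $\|u(t_*)-u_{\textup{min}}\|_{L^q}\le\rho$ combined with $t_*<\tau(\rho,q)$ forces $t_*$ to be an accumulation point where the open-ball condition fails, contradicting lower semicontinuity of the hitting time of an open set). Hence $\liminf_h \tau^h(\rho,q) \ge \tau(\rho,q)$.

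The main obstacle is the mismatch between open and closed sublevel conditions: the hitting time of an open set is upper semicontinuous under uniform convergence but not lower semicontinuous, which is exactly why the $\liminf$ bound is only $\tau(\rho,q)$ while the $\limsup$ bound degrades to $\tau(\rho^-,q)$. The delicate point is to carefully exploit the continuity of $t\mapsto \|u(t)-u_{\textup{min}}\|_{L^q}$ (which follows from almost-sure continuity of $u$ as a $C([0,1])$-valued process, Proposition \ref{prop:continuoussolution}) together with the $\delta$-shift $\rho \mapsto \rho-\delta$ to absorb the discretization error $\|u^h-u\|_{L^q} \le \XX h^\eta$ and the interpolation error $\|u^h_{\textup{min}}-u_{\textup{min}}\|_{L^q} = O(h)$ for all small $h$ simultaneously; since $\XX$ is a.s. finite this is harmless. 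No genuinely new estimate is needed beyond Theorem \ref{theo:asconvergence}; the proof is a soft topological argument on hitting times.
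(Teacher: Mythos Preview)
Your upper-bound argument is correct and essentially identical to the paper's. The gap is in your lower bound: the contradiction argument does not close. From $\tau^{h_n}(\rho,q) < L'$ you extract $t_n < L'$ with $\|u^{h_n}(t_n)-u^{h_n}_{\textup{min}}\|_{L^q} < \rho$, and passing to the limit yields only $\|u(t_*)-u_{\textup{min}}\|_{L^q} \le \rho$ with $t_* < \tau(\rho,q)$. This is \emph{not} a contradiction, since $\tau(\rho,q)$ is the hitting time of the \emph{open} ball and the process may well touch the sphere $\{=\rho\}$ before entering it. Your own parenthetical remarks (``near-contradiction'', ``too strong'', ``lower semicontinuity'') acknowledge the difficulty without resolving it; none of them is an actual argument.

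The paper avoids the issue by running the same $\delta$-shift symmetrically. For $t < \min\{\tau(\rho+\delta,q),T\}$ one has $\|u(t)-u_{\textup{min}}\|_{L^q} \ge \rho+\delta$, and the reverse triangle inequality (using $\|u^h(t)-u(t)\|_{L^q} + \|u^h_{\textup{min}}-u_{\textup{min}}\|_{L^q} \le \delta$ for all $h \le h_0(\omega)$) gives $\|u^h(t)-u^h_{\textup{min}}\|_{L^q} \ge \rho$, hence $\min\{\tau^h(\rho,q),T\} \ge t$. This yields $\min\{\tau(\rho+\delta,q),T\} \le \liminf_{h\to 0}\min\{\tau^h(\rho,q),T\}$ directly, with no compactness or subsequence needed. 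One then lets $\delta \to 0^+$ and uses right-continuity of $\rho \mapsto \tau(\rho,q)$ (the paper phrases this as ``cadlag'') to get $\tau(\rho,q)$ on the left. So the missing idea is simply to use the $\delta$-buffer on \emph{both} sides, not only for the upper bound; the open/closed ambiguity is then absorbed by the monotone limit $\rho+\delta \downarrow \rho$ rather than by a contradiction argument.
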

\begin{proof}{Lemma}{lemma:tau}
Theorem \ref{theo:asconvergence} implies the almost sure convergence of $u^h$ to $u$ in $C([0,T], L^q([0,1]))$. Consequently, there exists $h_0(\omega)$ such that for all $h \leq h_0(\omega)$
\begin{equation}
 \sup_{t\in [0,T]}\left[ \int_0^1 |u^h - u|^q \, dx \right]^{\frac{1}{q}}(\omega) \; \leq \; \frac{\delta}{2}
\end{equation}
and
\begin{equation}
\| u_{\textup{min}}^h - u_{\textup{min}}\|_{L^q([0,1])}  \; \leq \; \frac{\delta}{2}
\end{equation}
We see that 
\begin{equation}
 \| u(t) - u_{\textup{min}}\|_{L^q([0,1])}  \geq  \rho + \delta
\end{equation}
and calculate
\begin{equation}
\begin{aligned}
\| u(t) - u_{\textup{min}}\|_{L^q([0,1])} 
&\leq \| u(t) - u^h(t)\|_{L^q([0,1])} + \| u^h(t) - u^h_f\|_{L^q([0,1])} + \| u^h_f(t) - u_{\textup{min}}\|_{L^q([0,1])}\\
&\leq \; \delta +  \| u^h(t) - u^h_f\|_{L^q([0,1])}
\end{aligned}
\end{equation}
which gives 
$ \min \lbrace  \tau^h(\rho , q), T \rbrace \geq t  $
and 
\begin{equation}\label{int:lowertau}
  \min \lbrace  \tau(\rho + \delta , q), T \rbrace \leq \liminf_{h\to 0} \min \lbrace  \tau^h(\rho , q), T \rbrace
\end{equation}
Similarly, we have for $t \leq   \min \lbrace  \tau^h(\rho , q), T \rbrace$ and all $h \leq h_0(\omega)$
\begin{equation}
\begin{aligned}
\| u^h(t) - u^h_f\|_{L^q([0,1])} 
&\leq \| u(t) - u^h(t)\|_{L^q([0,1])} + \| u(t) - u_{\textup{min}}\|_{L^q([0,1])} + \| u^h_f(t) - u_{\textup{min}}\|_{L^q([0,1])}\\
&\leq \; \delta +  \| u(t) - u_{\textup{min}}\|_{L^q([0,1])}
\end{aligned}
\end{equation}
from which we conclude
\begin{equation}\label{int:uppertau}
\limsup_{h\to 0} \min \lbrace  \tau^h(\rho , q), T \rbrace \; \leq \; \min \lbrace  \tau(\rho - \delta , q), T \rbrace 
\end{equation}
Therefore,
 \begin{equation}
 \tau(\rho + \delta , q) \; \leq \; \liminf_{h\to 0}  \tau^h(\rho , q) \; \leq \; \limsup_{h\to 0}  \tau^h(\rho , q) \; \leq \;   \tau(\rho - \delta , q)
 \end{equation}
As $ \rho \mapsto \tau(\rho , q)$ and $ \rho \mapsto \tau^h(\rho , q)$ are cadlag functions, we can deduce
 \begin{equation}
 \tau(\rho, q) \; \leq \; \liminf_{h\to 0}  \tau^h(\rho , q) \; \leq \; \limsup_{h\to 0}  \tau^h(\rho , q) \; \leq \;   \tau(\rho^- , q) \qquad{ a. s.}
 \end{equation}
\end{proof}
 
 \begin{theorem}\label{theo:transition}
 Under the conditions of  Theorem \ref{theo:asconvergence}, for almost all $\rho >0$,
 \begin{equation}
 \tau^h(\rho , q)  \longrightarrow \tau(\rho , q)  \qquad  \textup{a. s.  as } h \to 0
 \end{equation}
 and 
  \begin{equation}
 \E\left[\tau^h(\rho , q) \right] \longrightarrow  \E\left[ \tau(\rho , q) \right]   \qquad \textup{a. s.  as } h \to 0
 \end{equation}
 \end{theorem}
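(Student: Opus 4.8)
The plan is to read off the almost sure statement from the two-sided estimate of Lemma~\ref{lemma:tau}, and then to promote it to convergence of expectations by a uniform integrability argument.

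\medskip

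\emph{Almost sure convergence.} Lemma~\ref{lemma:tau} already yields, for every fixed $\rho>0$,
\[
\tau(\rho,q)\le\liminf_{h\to0}\tau^h(\rho,q)\le\limsup_{h\to0}\tau^h(\rho,q)\le\tau(\rho^-,q)\qquad\text{a.s.},
\]
so the only thing left is to show $\tau(\rho^-,q)=\tau(\rho,q)$ almost surely for Lebesgue--almost every $\rho>0$. I would argue pathwise: for fixed $\omega$ the map $\rho\mapsto\tau(\rho,q)(\omega)$ is non-increasing (a larger ball is entered no later), hence continuous off a countable set, so $D(\omega)=\{\rho>0:\tau(\rho^-,q)(\omega)>\tau(\rho,q)(\omega)\}$ is Lebesgue--null. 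Tonelli's theorem then gives $\int_0^\infty\P[\tau(\rho^-,q)>\tau(\rho,q)]\,d\rho=\E[\lvert D\rvert]=0$, hence $\P[\tau(\rho^-,q)=\tau(\rho,q)]=1$ for a.e. $\rho$, and for any such $\rho$ the squeeze above forces $\tau^h(\rho,q)\to\tau(\rho,q)$ a.s.

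\medskip

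\emph{Convergence of expectations.} Since both limits are a.s. finite, by Vitali's convergence theorem it suffices to check $\E[\tau(\rho,q)]<\infty$ and uniform integrability of $\{\tau^h(\rho,q)\}_h$; I will in fact aim for an $h$--uniform exponential tail bound $\P[\tau^h(\rho,q)>t]\le Ce^{-ct}$ for $t\ge1$, from which $\sup_h\E[(\tau^h(\rho,q))^2]<\infty$ and hence uniform integrability follow. Finiteness of $\E[\tau(\rho,q)]$ I would take from the sharp metastability estimates for \eqref{int:generalac} in \cite{barret}. For the tail bound I would use the Markov property of \eqref{int:discretesystem} together with a one-step lower bound: there is $p_0>0$, independent of $h$, such that from \emph{any} current configuration the solution enters the $L^q$--ball of radius $\rho$ around $u^h_{\textup{min}}$ within one unit of time with conditional probability at least $p_0$. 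To produce $p_0$ I would first restrict, up to an event of probability $\le\varepsilon$ uniform in $h$ (Proposition~\ref{prop:moments} and Markov's inequality), to configurations with $\sup|u^h|\le Z$; on that precompact set a Girsanov/support--theorem argument, steering \eqref{int:discretesystem} along a fixed smooth path into the $\rho$--ball, gives the bound, the Cameron--Martin cost of the steering being controlled uniformly in $h$ by the semigroup estimates of Lemma~\ref{lemma:discSG3} and Proposition~\ref{prop:sgconv}. Iterating over the intervals $[k,k+1]$ yields $\P[\tau^h(\rho,q)>t]\le\varepsilon+(1-p_0)^{\lfloor t\rfloor}$, and letting $Z\to\infty$ (so $\varepsilon\to0$) gives the exponential decay; Vitali then delivers $\E[\tau^h(\rho,q)]\to\E[\tau(\rho,q)]$.

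\medskip

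\emph{Main obstacle.} The delicate point is the $h$--uniform one-step lower bound $p_0$. Because the driving noise in \eqref{int:discretesystem} has dimension $1/h\to\infty$, the control cost of a naive steering strategy blows up, and one must choose the target path and exploit the uniform bounds on $g^h$ (Lemmas~\ref{lemma:discSG3}--\ref{lemma:discSG4}, Proposition~\ref{prop:sgconv}) so that the exponential moments of the Radon--Nikodym density stay bounded as $h\to0$. The remaining ingredients --- monotonicity, Tonelli, the Markov property and Vitali's theorem --- are routine.
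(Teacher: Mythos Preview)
Your almost-sure argument is essentially identical to the paper's: both invoke Lemma~\ref{lemma:tau}, use monotonicity of $\rho\mapsto\tau(\rho,q)$ to get an at-most-countable (hence Lebesgue-null) discontinuity set $D(\omega)$ for each $\omega$, and then apply Fubini/Tonelli to swap the integrations and conclude $\P[\tau(\rho^-,q)=\tau(\rho,q)]=1$ for a.e.\ $\rho>0$. The squeeze then gives the a.s.\ convergence. This part is complete.

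For the convergence of expectations the two approaches diverge. The paper simply writes ``by dominated convergence'', deferring the justification to the analogous nearest-neighbour argument in \cite{barret}, and does not exhibit the dominating variable in the present text. You instead go for uniform integrability via Vitali, backed by an $h$-uniform exponential tail $\P[\tau^h(\rho,q)>t]\le Ce^{-ct}$. Your route is more self-contained in spirit, but the sketch does not close, for reasons beyond the obstacle you flag. First, Proposition~\ref{prop:moments} only controls moments on a \emph{fixed} window $[0,T]$ with a $T$-dependent constant, so the step ``restrict to $\{\sup|u^h|\le Z\}$ with probability $\ge 1-\varepsilon$'' is not available uniformly over all starting times $k\in\nat$; you would need a uniform-in-time moment bound, which the paper does not provide. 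Second, your displayed bound $\P[\tau^h>t]\le\varepsilon+(1-p_0)^{\lfloor t\rfloor}$ does not become exponential by sending $Z\to\infty$: $p_0=p_0(Z)$ will typically tend to $0$ at the same time as $\varepsilon\to 0$, so the order of limits matters (iterating the one-step estimate correctly would rather give $(\varepsilon+(1-p_0))^{\lfloor t\rfloor}$, after which one fixes $Z$ large once so that $\varepsilon+(1-p_0)<1$). Third, the Girsanov/support argument in noise-dimension $1/h$ is, as you say, the delicate point, and Lemmas~\ref{lemma:discSG3}--\ref{lemma:discSG4} and Proposition~\ref{prop:sgconv} control $L^2$ quantities of $g^h$, not directly the Cameron--Martin cost of a steering control that forces $u^h$ into the $\rho$-ball in unit time; turning one into the other is precisely where the work lies.

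In summary: the a.s.\ half matches the paper and is fine. For the expectations the paper takes a one-line shortcut (dominated convergence, with the domination borrowed from \cite{barret}), whereas you attempt a more honest but substantially harder route whose key $h$-uniform tail estimate is not established by the ingredients you cite.
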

\begin{proof}{Theorem}{theo:transition}
The proof is very similar to the nearest case treated in \cite{barret}.
 Lemma \ref{lemma:tau} shows the statement for all points of continuity of $\rho \mapsto \tau(\rho , q)$. As $\rho \mapsto \tau(\rho , q)$ is cadlag and increasing on almost all $\omega \in \Omega$, there are  at most countably many points of discontinuity. 
 
 Let us denote by $ \mathscr{D}$ the points of discontinuity for a fixed $\omega$ and by $\mathscr{J} $ the jump set:
 \begin{equation}
 \mathscr{D}(\omega ) = \left\lbrace \rho \in \real^+, \tau (\rho^-,q) \neq \tau (\rho,q) \right\rbrace \qquad  \mathscr{J} = \bigcup_{\omega} \lbrace \omega \rbrace \times \mathscr{D}(\omega)
 \end{equation}
 For each $\omega$, there are at most countably many points of discontinuity, i.e. $ \mathscr{D}(\omega )$ has Lebesgue measure zero. 
 
 Let us now  call $\mathscr{N}$ the set of $\omega$ for which $\tau$ is discontinuous in $\rho$. This gives us an alternative description of the jump set in terms of unions of $\rho$:
  \begin{equation}
 \mathscr{J} = \bigcup_{\omega} \lbrace \omega \rbrace \times \mathscr{D}(\omega) \; = \; \bigcup_{\rho \in \real^+_{>0} } \mathscr{N}(\rho) \times \lbrace \rho \rbrace
 \end{equation}
We calculate with Fubini
\begin{equation}
\begin{aligned}
\int_{\real^+_{>0} } \P[\mathscr{N}(\rho)] d\rho \; = \; \int_{\real^+_{>0} } \E[\indicator_{\mathscr{N}(\rho)}] d\rho 
\; &= \; \int_{\Omega} \int_{\real^+_{>0} } \indicator_{\mathscr{J}(\omega \rho)} d\rho \, d\P(\omega ) \\
 &= \int_{\Omega \setminus \mathscr{N}} \int_{\real^+_{>0} } \indicator_{\mathscr{D}(\omega)}( \rho) d\rho \, d\P(\omega ) =0
\end{aligned}
\end{equation}
Therefore, there exists a set of measure zero $A$ of $\real^+$
\begin{equation}
\P[\mathscr{N}(\rho)]  = 0 \qquad \textup{ for all } \rho \in \real^+ \setminus A
\end{equation} 
 Consequently, $ \tau (\rho^-,q)(\omega ) =  \tau (\rho,q) (\omega )$ for all  $\rho \in \real^+ \setminus A$ and  $\omega \in \Omega \setminus \mathscr{N}(\rho)$.
 This means, as $h \to 0$, 
 \begin{equation}
  \tau^h(\rho , q)  \longrightarrow \tau(\rho , q)  \qquad \omega-\textup{a. s.} , \rho-\textup{ a.e.} 
 \end{equation}
 By dominated convergence, we conclude that for all $\rho \in \real^+ \setminus  \mathscr{D}$
   \begin{equation}
 \E\left[\tau^h(\rho , q) \right] \longrightarrow  \E\left[ \tau(\rho , q) \right]   \qquad \textup{ as } h \to 0.
 \end{equation}
\end{proof}

\bibliographystyle{plain}

\end{document}